\numberwithin{equation}{section}
\newcommand{\R}{\mathbb{R}}
\numberwithin{equation}{section} %pour numeroter les equations par section
\newtheorem{theorem}{Theorem}[section]
\newtheorem{lemma}[theorem]{Lemma}
\newtheorem{remark}[theorem]{Remark}
\begin{document}

\title{linear inviscid damping near monotone shear flows}

%\author{Alexandru D. Ionescu}
%\address{Princeton University}
%\email{aionescu@math.princeton.edu}

\author{Hao Jia}
\address{University of Minnesota}
\email{jia@umn.edu}

%\thanks{The first author was supported in part by NSF grant DMS-1600028 and by NSF-FRG grant DMS-1463753.  The second author was supported in part by DMS-1600779}

\begin{abstract}
{\small}
We give an elementary proof of sharp decay rates and the linear inviscid damping near monotone shear flow in a periodic channel, first obtained in \cite{dongyi}. We shall also obtain the precise asymptotics of the solutions, measured in the space $L^{\infty}$.

\end{abstract}

\maketitle

\setcounter{tocdepth}{1}

%\tableofcontents

\section{Introduction}

Hydrodynamic stability is one of the oldest problems studied in partial differential equations and has been investigated by prominent figures such as Kelvin, Rayleigh, Orr among many others. The early works mostly focused on the issue of spectral stability of physically relevant flows, such as shear flows and circular flows, see e.g., \cite{Kelvin,Kirchhoff,Orr,Rayleigh}. 

In the case of monotone shear flows, which is our main interest in this paper, Faddeev \cite{Faddeev} studied the general spectral property of monotone shear flows, Lin \cite{Lin} obtained a sharp condition for the presence of unstable eigenvalues, and Stepin \cite{Stepin} proved a quantitative decay estimate of the stream function associated with the continuous part of the spectrum of the monotone shear flows, see also \cite{ZhiWu} for the optimal decay in the Couette case. 

Recently, inspired by the remarkable work of Bedrossian and Masmoudi \cite{BeMa} on the nonlinear asymptotic stability of shear flows close to the Couette flow in $\mathbb{T}\times\R$ (see also an extension \cite{IOJI} to $\mathbb{T}\times[0,1]$), optimal decay estimates for the linear problem received much attention, see e.g.  Zillinger \cite{Zillinger1,Zillinger2} and references therein for shear flows close to Couette.  In an important work, Wei, Zhang and Zhao in \cite{dongyi} obtained the optimal decay estimates for the linearized problem around monotone shear flows, under very general conditions. We also refer the reader to important developments for the linear inviscid damping in the case of non-monotone shear flows \cite{Dongyi2, Dongyi3} and circular flows \cite{Bed2,Zillinger3}. See also Grenier et al \cite{Grenier} for an approach using methods from the study of Schr\"odinger operators.

Our main goal in this paper is to provide an elementary alternative proof of the optimal decay rate for the linearized problem around monotone shear flows, first obtained in \cite{dongyi}. The main new idea is to use spaces that adapt precisely to the structure of singularities of the generalized eigenfunctions, and to treat the singular integrals using integration by parts arguments. We will discuss our approach in more details below.

The second goal is to identify the main terms in the aymptotic, measured in the stronger space $L^{\infty}$. The more precise understanding of the aymptotic may be useful for the nonlinear analysis, since the residue term, which is expected to decay in an integrable fashion, can in principle be controlled by cruder methods. See the remark below Lemma \ref{L10} for more discussions. 

The third goal is to clarify the role of the boundary effects in deciding the dynamics of the solutions. The fact that the boundary effect is significant, and can be an obstruction for scattering of the vorticity in high regularity spaces, has already been observed by Zillinger in \cite{Zillinger2} for shear flows close to Couette flow (i.e., linear shear). In our paper, the boundary effect can be clearly seen, since it contributes, in a relatively explicit way, to the main term in the asymptotics, see \eqref{L11} below. In addition, by tracking precisely the main terms, it seems clear that the boundary effects are the {\it only} obstruction to scattering in high Sobolev, and even Gevrey spaces. We remark that the Gevrey space control, in a suitably adapted coordinate system, is essential for proving nonlinear asymptotic stability, as demonstrated in \cite{Deng}. Hence, to extend the linear analysis to nonlinear analysis which is much more subtle and challenging, it appears necessary to assume conditions that allow the vorticity to be supported away from the boundary, as in \cite{IOJI} for the Couette flow. We plan to investigate this issue in another place.

%Now we begin with the introduction of the mathematical background.
\subsection{Equations and the main result}
We now turn to disuss in more details the main equations that we shall study. Consider the two dimensional linearized Euler equation around a shear flow $(b(y),0)$ in a periodic channel $ (x,y)\in \mathbb{T}\times[0,1]$:
\begin{equation}\label{Main1}
\begin{split}
&\partial_t\omega+b(y)\partial_x\omega-b''(y)u^y=0,\\
&{\rm div}\,u=0\qquad{\rm and}\qquad \omega=-\partial_yu^x+\partial_xu^y,
\end{split}
\end{equation}
with the natural non-penetration boundary condition $u^y|_{y=0,1}=0$. For the linearized flow, 
$$\int\limits_{\mathbb{T}\times[0,\,1]}u^x(x,y,t)\,dxdy \qquad {\rm and}\qquad \int\limits_{\mathbb{T}\times[0,\,1]}\omega(x,y,t)\,dxdy$$
are conserved quantities. In this paper, we will assume that
 $$\int_{\mathbb{T}\times[0,1]}u_0^x(x,y)\,dxdy=\int_{\mathbb{T}\times[0,1]}\omega_0\,dx dy=0.$$
 These assumptions can be dropped by adjusting $b(y)$ with a linear shear flow $C_0y+C_1$.
 Then one can see from the divergence free condition on $u$ that 
there exists a stream function $\psi(t,x,y)$ with $\psi(t,x,0)=\psi(t,x,1)\equiv 0$, such that 
\begin{equation}\label{eqS1}
u^x=-\partial_y\psi,\,\,u^y=\partial_x\psi.
\end{equation}
The stream function $\psi$ can be solved through
\begin{equation}\label{eq:equationStream}
\Delta\psi=\omega, \qquad \psi|_{y=0,1}=0.
\end{equation}

We summarize our equations as follows
\begin{equation}\label{main}
\left\{\begin{array}{ll}
\partial_t\omega+b(y)\partial_x\omega-b''(y)\partial_x\psi=0,&\\
\Delta \psi(t,x,y)=\omega(t,x,y),\qquad \psi(t,x,0)=\psi(t,x,1)=0,&\\
(u^x,u^y)=(-\partial_y\psi,\partial_x\psi),&
\end{array}\right.
\end{equation}
for $ t\ge0, (x,y)\in\mathbb{T}\times[0,1]$. Our goal is to understand the long time behavior of $\omega(t)$ as $t\to\infty$ with small regular initial $\omega_0$. 

The main conditions we shall assume on the shear flow $b(y)\in C^4([0,1])$ are:\\

 (1) For some $\vartheta\in(0,1/10)$, 
\begin{equation}\label{A}
\vartheta/100\leq|b'(y)|\leq 1/(100 \vartheta);
\end{equation}

(2) The linearized operator $\omega\to b(y)\partial_x\omega-b''(y)\psi$ has no embedded eigenvalues.\\

We shall discuss the assumption on the absence of embedded eigenvalues in more details in subsection \ref{idea} below.
\medskip

Our main result is the following theorem.
\begin{theorem}\label{thm}
Let $\omega$ be a smooth solution to \eqref{main} with associated velocity field $u=(u^x,u^y)$, stream function $\psi$, and initial data $\omega_0\in H^3(\mathbb{T}\times[0,1])$. Assume that $\int_{\mathbb{T}}\omega_0(x,y)\,dx=0$ and that $\omega_0$ belongs to the projection to the continuous spectrum of the operator $\omega\to b(y)\partial_x\omega-b''(y)\partial_x\psi$.
Set
\begin{equation}\label{Th1}
f(t,x,y):=\omega(t,x+b(y)t,y),\qquad \phi(t,x,y):=\psi(t,x+b(y)t,y).
\end{equation}
Then there exist functions $F(x,y), \Psi(x,y)\in L^{\infty}(\mathbb{T}\times[0,1])$ with
\begin{equation}\label{Th1.1}
\|F\|_{L^{\infty}(\mathbb{T}\times[0,1])}+\|\Psi\|_{L^{\infty}(\mathbb{T}\times[0,1])}\lesssim \big\|\omega_0\big\|_{H^3},
\end{equation}
such that the following statements hold:

(1) The normalized vorticity scatters:
\begin{equation}\label{Th2}
\lim_{t\to\infty}\|f(t,x,y)-F(x,y)\|_{L^{\infty}(\mathbb{T}\times[0,1])}=0;
\end{equation}
The normalized stream function satisfies the bounds
\begin{equation}\label{Th3}
\sup_{\alpha\in\{0,1\}}\|\partial_x^{\alpha}\phi(t,x,y)\|_{L^{\infty}(\mathbb{T}\times[0,1])}\lesssim t^{-2}\big\|\omega_0\big\|_{H^3},\qquad \|\partial_y\phi(t,x,y)\|_{L^{\infty}(\mathbb{T}\times[0,1]))}\lesssim t^{-1}\big\|\omega_0\big\|_{H^3}.
\end{equation}

(2) In addition, if $\omega_0$ vanishes on the boundary of the periodic channel, i.e., $\omega_0|_{y=0,1}=0$, then 
\begin{equation}\label{Th4}
\lim_{t\to\infty}\|\partial_{x,y}f(t,x,y)-\partial_{x,y}F(x,y)\|_{L^{\infty}(\mathbb{T}\times[0,1])}=0,
\end{equation}
and
\begin{equation}\label{Th5}
\lim_{t\to\infty}\Big[\|t^2\phi(t,x,y)-\Psi(x,y)\|_{L^{\infty}(\mathbb{T}\times[0,1])}+\|t\partial_y\phi(t,x,y)\|_{L^{\infty}(\mathbb{T}\times[0,1])}\Big]=0.
\end{equation}

 \end{theorem}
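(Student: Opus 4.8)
The plan is to work entirely on the Fourier side in $x$. Writing $\omega(t,x,y)=\sum_{k\in\Z}\omega_k(t,y)e^{ikx}$ and similarly for $\psi$, equation \eqref{main} decouples into a family of one-dimensional problems indexed by the frequency $k\ne0$ (the $k=0$ mode vanishes by the mean-zero assumption). For each $k$ the system is $\partial_t\omega_k+ikb(y)\omega_k-ikb''(y)\psi_k=0$ with $(\partial_y^2-k^2)\psi_k=\omega_k$, $\psi_k|_{y=0,1}=0$. The first step is to represent the solution via the spectral/resolvent formalism for the Rayleigh operator $L_k$: using the limiting absorption principle one writes $\omega_k(t,y)$ as a contour integral over the continuous spectrum $\{c=b(y_0):y_0\in[0,1]\}$, with integrand built from the generalized eigenfunctions $\psi_{k}(y,c\pm i0)$ solving the inhomogeneous Rayleigh equation $(\partial_y^2-k^2)\psi-\frac{b''}{b-c}\psi=\frac{\omega_0}{b-c}$. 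The change of variables $x\mapsto x+b(y)t$ producing $f,\phi$ in \eqref{Th1} is exactly designed so that, after the substitution, the oscillatory factor becomes $e^{-ik(b(y)-b(y_0))t}$, concentrating near $y_0=y$ as $t\to\infty$; the asymptotics of $f$ and $\phi$ are then read off from a stationary-phase / localization analysis of this integral.

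The heart of the argument is understanding the singularity structure of the generalized eigenfunctions. Following the stated new idea of the paper, I would introduce function spaces adapted to the logarithmic and jump singularities of $\psi_k(y,c)$ at $y=y_0(c)$ (the point where $b(y_0)=c$). Concretely, the inhomogeneous Rayleigh equation, inverted against the Green's function of $\partial_y^2-k^2$, is a singular integral equation; its solution has the form $\psi_k(y,c)= (\text{smooth part}) + A(c)(b(y)-c)\log(b(y)-c) + \ldots$, where the branch of the log differs for $c+i0$ and $c-i0$, and this jump — proportional to $[\psi_k]$ across the spectrum — is what drives the inviscid damping. The decay rates $t^{-1}$ and $t^{-2}$ in \eqref{Th3} for $\partial_y\phi$ and $\phi,\partial_x\phi$ come from integrating this structure against $e^{-ik(b-b(y_0))t}$: one integration by parts in $y_0$ gains a factor $t^{-1}$, and the vanishing of $\psi_k$ at the channel walls plus the structure of the Wronskian gives the extra gain to $t^{-2}$ for $\phi$ itself. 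For the scattering of $f$ in $L^\infty$ one extracts, as $t\to\infty$, the on-diagonal contribution $F_k(y) = \omega_{0,k}(y) + (\text{correction from the }b''\psi\text{ term integrated in time})$, and shows the remainder tends to zero uniformly; the $H^3$ norm of $\omega_0$ controls all the weighted estimates needed to make the singular integrals converge absolutely after the integrations by parts, and a Sobolev embedding in $k$ turns the $\ell^2$-in-$k$ bounds into the $L^\infty_x$ statement.

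For part (2), the additional hypothesis $\omega_0|_{y=0,1}=0$ is used precisely to kill the boundary contributions that otherwise obstruct differentiating the scattering statement: when $\omega_0$ vanishes at the walls, the generalized eigenfunction $\psi_k(y,c)$ gains regularity as $c\to b(0)$ or $b(1)$ (the coefficient $A(c)$ of the singular term no longer has a boundary-generated jump), so one more derivative in $y$ can be placed on $f$ while keeping the remainder $o(1)$; this yields \eqref{Th4}. Likewise, the limit \eqref{Th5} with the explicit profile $\Psi(x,y)$ (corresponding to the limit of $t^2\phi$) is obtained by carrying out the stationary-phase expansion of the contour integral for $\phi$ to its leading order: the leading coefficient is a boundary-type term, consistent with the discussion around \eqref{L11} that the boundary effect contributes explicitly to the main asymptotic term, and is finite in $L^\infty$ exactly because the vorticity vanishes on $\partial(\T\times[0,1])$.

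I expect the main obstacle to be the careful construction of the generalized eigenfunctions and the quantitative control of their singularities uniformly in $k$ and in the spectral parameter $c$ — in particular, making the integration-by-parts manipulations of the singular contour integrals rigorous near the endpoints $c=b(0),b(1)$, and near any values of $c$ where the Rayleigh equation is close to resonant. The no-embedded-eigenvalue hypothesis (2) is what guarantees invertibility (a uniform lower bound on the relevant Wronskian/Fredholm determinant) so that the limiting absorption principle applies; quantifying that lower bound, and propagating it through the $H^3$-based weighted norms to get the stated $L^\infty$ asymptotics with the sharp $t^{-1}$ and $t^{-2}$ rates, is the technical core of the proof.
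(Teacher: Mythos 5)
Your proposal follows essentially the same route as the paper: Fourier analysis in $x$, the resolvent/limiting-absorption representation of $\psi_k$ as an integral of the jump of the generalized eigenfunctions over the continuous spectrum, singularity-adapted norms for those eigenfunctions, repeated integration by parts in the spectral parameter to extract the $t^{-1}$ and $t^{-2}$ rates, and the vanishing of $\omega_0$ on the walls to remove the boundary obstructions in part (2). The only cosmetic difference is your ``stationary-phase'' framing --- since $b$ is monotone the phase has no critical points, and the paper's localization at $y_0=y$ comes instead from the $1/(b(y)-b(y_0)\mp i0)$ singularity of $\partial_{y_0}^2\psi^{\pm}_{k}$ acting as a Hilbert-transform kernel --- but your integration-by-parts mechanism is the correct one.
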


\begin{remark}
The assumption that $\int_{\mathbb{T}}\omega(t,x,y)\,dx=0$ is preserved by the flow and the general case can be reduced to this case by subtracting a shear flow.

More precise and technical version of the theorem can be obtained, with more explicit expressions for the function $\Psi$, see \eqref{Psi6}. We have not tried to track the optimal dependence on the regularity of the initial data, partly to keep the paper simpler to read, and also due to the fact that for the corresponding nonlinear analysis it is necessary to work in much smoother spaces (Gevrey type spaces) than Sobolev spaces and higher frequencies needs to be controlled using other arguments in any case. See \cite{BeMa,Deng,IOJI}.
\end{remark}

\begin{remark}
The identification of the main asymptotic term for $\phi$ in \eqref{Th5} may be useful for the analysis of nonlinear stability problems, as the residue term is expected to decay faster, in an integrable fashion. The more precise formula \eqref{L11.5} seems to suggest a modification of the main dynamics from the Couette case, which may be relevant for proving nonlinear stability for general monotone shear flows. 

\end{remark}

\begin{remark}
In general, it is necessary to assume that $\omega_0$ vanishes at $y=0,1$ for \eqref{Th4}-\eqref{Th5} to hold, see \eqref{L11}. The boundary effect (even when assuming $\omega_0$ to vanish on the boundary) prevents scattering in higher regularity spaces,  as already observed by Zillinger \cite{Zillinger2} for shear flows close to Couette. 

%Such results are in contrast to the case without boundary. For instance, a uniform bound on the profile in higher Sobolev space (with a dependence on the angular frequency) was obtained in \cite{Bed2} in the setting of linearized vortex problem. 

\end{remark}

\subsection{Main idea of proof}\label{idea}
We now briefly outline the strategy of the proof of \ref{thm}.

Taking Fourier transform in $x$ in the equation \eqref{main} for $\omega$, we obtain that
 \begin{equation}\label{F3.0}
 \partial_t\omega_k+ikb(y)\omega_k-ikb''(y)\psi_k=0,
 \end{equation}
 for $k\in\mathbb{Z}, t\ge0, y\in[0,1]$. In the above, $\omega_k$ and $\psi_k$ are the Fourier coefficients for $\omega,\psi$ respectively.
 
 For each $k\in\mathbb{Z}\backslash\{0\}$, we set for any $g\in L^2(0,1)$,
 \begin{equation}\label{F3.1}
 L_kg(y)=b(y)g(y)+b''(y)\int_0^1G_k(y,z)g(z)dz,
 \end{equation}
 where $G_k$ is the Green's function for the operator $k^2-\frac{d^2}{dy^2}$ on $(0,1)$ with zero Dirichlet boundary condition. Then \eqref{F3.0} can be reformulated as
 \begin{equation}\label{F3.2}
 \partial_t\omega_k+ikL_k\omega_k=0.
 \end{equation}
 The spectral property of $L_k$ is well understood, and the spectrum is in general consisted of the continuous spectrum $[b(0), b(1)]$ with possible embedded eigenvalues at the inflection points of $b(y)$, i.e. points $y_c$ where $b''(y_c)=0$, together with some discrete eigenvalues with nonzero imaginary part for small $k$ which can only accumulate at embedded eigenvalues, see for instance \cite{Faddeev}. The presence of embedded eigenvalues is a non-generic situation.

In this paper, we assume that there is no embedded eigenvalues, which is the generic situation. More precisely we assume that

\smallskip
{\sl ({\bf A}) The operator $f\in L^2\to b(y)f+b''(y)\int_0^1G_k(y,z)f(z)\,dz\in L^2$ has no eigenvalues with the value $b(y_0), y_0\in[0,1]$, for any $k\in\mathbb{Z}\backslash\{0\}$.}

\medskip 
 
Assume now that $\omega_0$ has trivial projection in the discrete modes. By standard theory of spectral projection, we then have
 \begin{equation}\label{F4}
 \begin{split}
 \omega_k(t,y)&=\frac{1}{2\pi i}\lim_{\epsilon\to0+}\int_{\R}e^{i\lambda t}\left[(\lambda+kL_k-i\epsilon)^{-1}-(\lambda+kL_k+i\epsilon)^{-1}\right]\omega_0\,d\lambda\\
 &=\frac{1}{2\pi i}\lim_{\epsilon\to0+}\int_{0}^1e^{-ikb(y_0) t}|b'(y_0)|\left[(-b(y_0)+L_k-i\epsilon)^{-1}-(-b(y_0)+L_k+i\epsilon)^{-1}\right]\omega_0\,dy_0.
 \end{split}
 \end{equation}
We then obtain
 \begin{equation}\label{F5}
 \begin{split}
 \psi_k(t,y)&=-\frac{1}{2\pi i}\lim_{\epsilon\to0+}\int_{0}^1e^{-ikb(y_0) t}|b'(y_0)|\int_0^1G_k(y,z)\\
 &\hspace{1in}\times\bigg\{\Big[(-b(y_0)+L_k-i\epsilon)^{-1}-(-b(y_0)+L_k+i\epsilon)^{-1}\Big]\omega_0\bigg\}(z)\,dz dy_0\\
 &=-\frac{1}{2\pi i}\lim_{\epsilon\to0+}\int_{0}^1e^{-ikb(y_0) t}|b'(y_0)|\left[\psi_{k,\epsilon}^{-}(y,y_0)-\psi_{k,\epsilon}^{+}(y,y_0)\right]dy_0.
 \end{split}
 \end{equation}
 In the above, 
 \begin{equation}\label{F6}
 \begin{split}
 &\psi_{k,\epsilon}^{+}(y,y_0):=\int_0^1G_k(y,z)\Big[(-b(y_0)+L_k+i\epsilon)^{-1}\omega_0\Big](z)\,dz,\\
 &\psi_{k,\epsilon}^{-}(y,y_0):=\int_0^1G_k(y,z)\Big[(-b(y_0)+L_k-i\epsilon)^{-1}\omega_0\Big](z)\,dz.
 \end{split}
 \end{equation}
 We note that $\psi_{k,\epsilon}^{+}(y,y_0), \psi_{k,\epsilon}^{-}(y,y_0)$ satisfy for $\iota\in\{+,-\}$
 \begin{equation}\label{F7}
 -k^2\psi_{k,\epsilon}^{\iota}(y,y_0)+\frac{d^2}{dy^2}\psi_{k,\epsilon}^{\iota}(y,y_0)-\frac{b''(y)}{b(y)-b(y_0)+i\iota\epsilon}\psi_{k,\epsilon}^{\iota}(y,y_0)=\frac{-\omega_0(y)}{b(y)-b(y_0)+i\iota\epsilon}.
 \end{equation}
 The idea is to analyze the optimal smoothness of the functions $\psi_{k,\epsilon}^{\iota}(y,y_0), \iota\in\{\pm\}$ in $y_0$, and use integration by parts in the formula \eqref{F5} to obtain decay in time for $\psi_k(t,y)$.
%\end{section}

The main difficulty in analyzing the smoothness of the generalized eigenfunctions $\psi_{k,\epsilon}^{\iota}(y,y_0)$ for $\iota\in\{\pm\}$ is the presence of singularities, which is most obvious when $b''(y)=0$ (the Couette flow). In the case of Couette flow, the generalized eigenvalues can be explicitly solved in the form
\begin{equation}\label{F7.1}
\psi^{\iota}_{k,\epsilon}(y,y_0)\approx\int_0^1G_k(y,z)\frac{\omega_0^k(z)}{b(z)-b(y_0)+i\iota\epsilon}dz.
\end{equation}
\eqref{F7.1} can be analyzed directly and it follows that $\partial_{y_0}\psi_{k,\epsilon}^{\iota}(y,y_0)$ has a log singularity of the form $\log{(b(y)-b(y_0)+i\iota\epsilon)}$ and $\partial^2_{y_0}\psi_{k,\epsilon}^{\iota}(y,y_0)$ has a singularity of the form $1/(b(y)-b(y_0)+i\iota\epsilon)$. In particular, $\partial^2_{y_0}\psi_{k,\epsilon}^{\iota}(y,y_0)$ is no longer integrable as $\epsilon\to0$. Such singularities of the generalized eigenfunctions are the reason for the slow algebraic decay for the linearized flow. 

Our main new idea is to use norms which are adapted to the singularities of the generalized eigenfunctions. Such norms are carefully chosen and depend both on the spectral parameter $y_0$ and the smoothing parameter $\epsilon$. Classically, fixed spaces independent of $\epsilon$ are more often used. In our case, since the generalized eigenfunctions possess singularities that depend both on the spectral parameters and the smoothing parameters, the $y_0,\epsilon$ dependent spaces are more suitable in measuring the singularities.

The choice of the spaces which captures the precise nature of the singularities allow us to treat the singular factor $1/(b(z)-b(y_0)+i\epsilon)$, or powers of it, that appeared in the main eigenfunction equation \eqref{F7}, by simple integration by parts argument. 

The method is elementary and we are able to extract the main asymptotic term (not just the decay rate) in the strong $L^{\infty}$ space. Moreover, the effect of the boundary terms can be completely understood from our method, at least in principle. We expect similar methods to work in other settings, such as the linearized vortex problem, which might provide simpler proofs of the important and difficult results in \cite{Bed2}.

In this paper, we use the notation $A\lesssim B$ to denote $A\leq C B$ for a suitable constant $C>1$ which is independent of the parameters $k,y_0,\epsilon$. We also use $\langle y\rangle=\sqrt{1+y^2}$ for $y\in\R$.

\section{Boundedness of the operator $T_{k,y_0,\epsilon}$} 
For integers $k\in\mathbb{Z}\setminus\{0\}$, recall that the Green's function $G_k(y,z)$ solves  
\begin{equation}\label{eq:Helmoltz}
-\frac{d^2}{dy^2}G_k(y,z)+k^2G_k(y,z)=\delta_z(y),
\end{equation}
with Dirichlet boundary conditions $G_k(0,z)=G_k(1,z)=0$, $z\in [0,1]$. $G_k$ has the explicit formula 
\begin{equation}\label{eq:GreenFunction}
G_k(y,z)=\frac{1}{k\sinh k}
\begin{cases}
\sinh(k(1-z))\sinh (ky)\qquad&\text{ if }y\leq z,\\
\sinh (kz)\sinh(k(1-y))\qquad&\text{ if }y\geq z.
\end{cases}
\end{equation}

We note the following bounds for $G_k$
\begin{equation}\label{Gk1.1}
\begin{split}
\sup_{y\in[0,1], |A|\leq10}&\bigg[|k|^2\big\|G_k(y,z)(\log{|z-A|})^{m}\big\|_{L^1(z\in[0,1])}+|k|\big\|\partial_{y,z}G_k(y,z)(\log{|z-A|})^{m}\big\|_{L^1(z\in[0,1])}\bigg]\\
&\lesssim |\log{\langle k\rangle}|^m,\qquad {\rm for}\,\,m\in\{0,1,2,3,4,5\}.
\end{split}
\end{equation}

$G_k$ has the following symmetry
\begin{equation}\label{Gk2}
G_k(y,z)=G_k(z,y), \qquad {\rm for}\,\,k\in\mathbb{Z}\backslash\{0\}, y, z\in[0,1].
\end{equation}

Define
\begin{equation}\label{bX5}
G_k'(y,z)=\frac{1}{\sinh{k}}\left\{\begin{array}{lr}
                                         -k\cosh{(k(1-z))}\cosh{(ky)}, &0\leq y\leq z\leq 1;\\
                                         -k\cosh{(kz)}\cosh{(k(1-y))}, &1\ge y>z\ge0.
                                          \end{array}\right.
\end{equation}
By direct computation, we see $G_k'$ satisfies the bounds
\begin{equation}\label{Gk3.1}
\begin{split}
\sup_{y\in[0,1], |A|\leq10}&\bigg[\big\|G_k'(y,z)(\log{|z-A|})^{m}\big\|_{L^1(z\in[0,1])}+|k|^{-1}\big\|\partial_{y,z}G_k'(y,z)(\log{|z-A|})^{m}\big\|_{L^1(z\in[0,1])}\bigg]\\
&\lesssim |\log{\langle k\rangle}|^m,\qquad {\rm for}\,\,m\in\{0,1,2,3,4,5\}.
\end{split}
\end{equation}

We note that
\begin{equation}\label{Gk1.2}
\partial_y\partial_zG_k(y,z)=\partial_z\partial_yG_k(y,z)=\delta(y-z)+G'_k(y,z),\qquad{\rm for}\,\,y,z\in[0,1].
\end{equation}

Fix $\epsilon\in[-1/4,1/4]\backslash\{0\}, y_0\in[0,1],  k\in\mathbb{Z}\backslash\{0\}$. Define for each $f\in L^2(0,1)$ the operator
\begin{equation}\label{T1}
T_{k,y_0,\epsilon}f(y):=\int_0^1G_k(y,z)\frac{f(z)}{b(z)-b(y_0)+i\epsilon}dz.
\end{equation}

For each $\epsilon\in[-1/4,1/4]\backslash\{0\}, y_0\in[0,1], k\in\mathbb{Z}\backslash\{0\}$ and integers $1\leq m\leq 5$, define for any $f$
\begin{equation}\label{Y1'}
\begin{split}
\|f\|_{Y^{1,m}_{k,y_0,\epsilon}}:=&\big\|f\big\|_{L^{\infty}}+\left\|\frac{f'(y)/|k|}{(\log{(b(y)-b(y_0)+i\epsilon)}-\vartheta^{-1})^{1+m}}\right\|_{L^{\infty}}.
\end{split}
\end{equation}
For the sake of simplicity we use the convention that
\begin{equation}\label{Y1.1}
Y^1_{k,y_0,\epsilon}:=Y^{1,1}_{k,y_0,\epsilon}.
\end{equation}

We also need a stronger version of $Y^{1,m}_{k,y_0,\epsilon}$. Define for any $f$,
\begin{equation}\label{Z1'}
\|f\|_{Z^{1,m}_{k,y_0,\epsilon}}:=\big\|f\big\|_{L^{\infty}}+\,|k|^{-1}\inf_{f'=g(y)\log{(b(y)-b(y_0)+i\epsilon)}+h}\bigg[\big\|g\|_{Y^{1,m}_{k,y_0,\epsilon}}+\big\|h\big\|_{Y^{1,m+1}_{k,y_0,\epsilon}}\bigg].
\end{equation}
We shall use the convention that
\begin{equation}\label{Z1.1}
Z^1_{k,y_0,\epsilon}:=Z^{1,1}_{k,y_0,\epsilon}.
\end{equation}

Clearly for $y_0\in[0,1], k\in\mathbb{Z}\backslash\{0\}, \epsilon\in[-1/4,1/4]\backslash\{0\}$, with uniform constants,
\begin{equation}\label{inclu}
\|f\|_{Z^{1,m}_{k,y_0,\epsilon}}\lesssim \|f\|_{Y^{1,m}_{k,y_0,\epsilon}}.
\end{equation}

In addition, we shall need to work with singular functions. To capture the precise singular behavior of the generalized eigenfunctions, define for $\epsilon\in[-1/4,1/4]\backslash\{0\}, k\in\mathbb{Z}\backslash\{0\},y_0\in[0,1]$ and integers $1\leq m\leq 5$ the norm
\begin{equation}\label{X1.1}
\|f\|_{X^{1,m}_{k,y_0,\epsilon}}:=\inf\,\sum_{j=0}^m\|g_j\|_{Y^{1,m-j+1}_{k,y_0,\epsilon}},
\end{equation}
where the infimum is taken over all representations
\begin{equation}\label{X1}
\begin{split}
f(y):=\sum_{j=0}^mg_j(y)\big[\log(b(y)-b(y_0)+i\epsilon)\big]^j.
\end{split}
\end{equation}

We define a slightly stronger norm for $f=g(y)\big[\log{(b(y)-b(y_0)+i\epsilon)}-\vartheta^{-1}\big]$, 
\begin{equation}\label{X9'}
\left\|g(y)\big[\log{(b(y)-b(y_0)+i\epsilon)}-\vartheta^{-1}\big]\right\|_{X^2_{k,y_0,\epsilon}}:=\left\|g\right\|_{Z^1_{k,y_0,\epsilon}};
\end{equation}
define also for $f=g(y)/\big[b(y)-b(y_0)+i\epsilon\big]$,
\begin{equation}\label{X10}
\left\|\frac{g(y)}{b(y)-b(y_0)+i\epsilon}\right\|_{X^3_{k,y_0,\epsilon}}=|k|\|g\|_{ Z^{1}_{k,y_0,\epsilon}}.
\end{equation}

To simplify the notations, in this subsection we often suppress the dependence in $\epsilon\in[-1/4,1/4]\backslash\{0\}, k\in\mathbb{Z}\backslash\{0\},y_0\in[0,1]$ in the notations. For instance we use $T, X^1, Y^1, Z^1, X^2, X^3$ instead of $T_{k,y_0,\epsilon}, X^1_{k,y_0,\epsilon}, Y^1_{k,y_0,\epsilon}$, $Z^1_{k,y_0,\epsilon}$, $X^2_{k,y_0,\epsilon}$, $X^3_{k,y_0,\epsilon}$.

\begin{lemma}\label{bX1}
There exists a constant $C>1$ independent of $\epsilon\in[-1/4,1/4]\backslash\{0\}, k\in\mathbb{Z}\backslash\{0\},y_0\in[0,1]$ and integers $1\leq m\leq 5$, such that
\begin{equation}\label{bX1.0}
\left\|T_{k,y_0,\epsilon}\,f\right\|_{Z^{1,m}_{k,y_0,\epsilon}}\leq C|k|^{-1}\big[\log{\langle k\rangle}\big]^{m+2} \left\|f\right\|_{Y^{1,m}_{k,y_0,\epsilon}}.
\end{equation}
\end{lemma}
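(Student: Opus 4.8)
The plan is to estimate the two pieces of the $Z^{1,m}_{k,y_0,\epsilon}$ norm of $Tf$ separately, namely $\|Tf\|_{L^\infty}$ and, after differentiating in $y$, the infimum over decompositions $(Tf)' = g\log(b(y)-b(y_0)+i\epsilon) + h$ with $g$ controlled in $Y^{1,m}$ and $h$ in $Y^{1,m+1}$. For the $L^\infty$ bound, I would write $|Tf(y)| \le \|f\|_{L^\infty}\int_0^1 |G_k(y,z)|\,|b(z)-b(y_0)+i\epsilon|^{-1}\,dz$, bound $|b(z)-b(y_0)+i\epsilon| \gtrsim \vartheta|z-y_0|$ using the lower bound $|b'|\gtrsim\vartheta$ from \eqref{A} (for the real part) together with the imaginary part $\epsilon$ when $z$ is near $y_0$, and then invoke \eqref{Gk1.1} with the logarithmic weight $m=1$ (since $\int |G_k(y,z)|/|z-y_0|\,dz$ splits into a part where $|z-y_0|\gtrsim 1/|k|$, handled by $|k|^2\|G_k\log|z-y_0|\|_{L^1}\lesssim 1$ up to the $\log$, and a part $|z-y_0|\lesssim 1/|k|$ where $|G_k|\lesssim |z-y_0|/|k|$ from \eqref{eq:GreenFunction} so the singularity cancels). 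This yields $\|Tf\|_{L^\infty}\lesssim |k|^{-1}\log\langle k\rangle\,\|f\|_{L^\infty}$, which is more than enough.

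The substantial part is the derivative term. Differentiating \eqref{T1}, and using \eqref{Gk1.2}, one gets
$$(Tf)'(y) = \int_0^1 \partial_y G_k(y,z)\frac{f(z)}{b(z)-b(y_0)+i\epsilon}\,dz,$$
which I would then integrate by parts in $z$: writing $\partial_y G_k = \partial_y\partial_z(\text{something})$ is not quite available, so instead I would use that the derivative falls naturally and split off the diagonal contribution. Actually the cleanest route is to not differentiate the Green's function but to use the defining ODE: set $\phi = Tf$, so that $-\phi'' + k^2\phi = f(y)/(b(y)-b(y_0)+i\epsilon)$ with $\phi(0)=\phi(1)=0$. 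This identifies $\phi'' $, but I need $\phi'$; so I would instead represent $\phi'(y) = \int_0^1 \partial_y G_k(y,z)\,\frac{f(z)}{b(z)-b(y_0)+i\epsilon}\,dz$ directly and integrate by parts once in $z$, moving the $z$-derivative (coming from $\partial_z$ after using $\partial_y G_k(y,z)$'s structure and \eqref{bX5}) onto $f(z)/(b(z)-b(y_0)+i\epsilon)$. The boundary terms vanish because of the Dirichlet conditions on $G_k$. The derivative of $f(z)/(b(z)-b(y_0)+i\epsilon)$ produces two terms: $f'(z)/(b(z)-b(y_0)+i\epsilon)$ and $-f(z)b'(z)/(b(z)-b(y_0)+i\epsilon)^2$. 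The first, using the hypothesis $f\in Y^{1,m}$, is $|k|$ times something like $g(z)(\log(b(z)-b(y_0)+i\epsilon)-\vartheta^{-1})^{1+m}/(b(z)-b(y_0)+i\epsilon)$; the second has a $(b(z)-b(y_0)+i\epsilon)^{-2}$ singularity, which I would tame by a further integration by parts in $z$ (integrating $b'(z)(b(z)-b(y_0)+i\epsilon)^{-2} = -\partial_z(b(z)-b(y_0)+i\epsilon)^{-1}$), trading the double pole for a single pole at the cost of another $z$-derivative on $f$ and on the kernel $G_k'$.

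After these manipulations one is left with integrals of the schematic form $\int G_k'(y,z)\cdot(\text{bounded})\cdot\frac{(\log(b(z)-b(y_0)+i\epsilon))^{j}}{b(z)-b(y_0)+i\epsilon}\,dz$ for $j\le m+1$, plus $|k|^2\int G_k(y,z)(\cdots)\,dz$ terms. To extract the required decomposition $(Tf)' = g\log(b(y)-b(y_0)+i\epsilon)+h$, the key idea is the standard trick: write $\frac{1}{b(z)-b(y_0)+i\epsilon} = \partial_z\big[\log(b(z)-b(y_0)+i\epsilon)\big]/b'(z)$ and integrate by parts once more, so that the logarithm is produced explicitly; the ``local'' contribution near $z=y$ reproduces $\log(b(y)-b(y_0)+i\epsilon)$ times a smooth profile (that is the $g$), while the rest, where one more derivative has been absorbed, lands in the space with one more logarithmic power allowed, i.e. $Y^{1,m+1}$ (that is the $h$). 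The logarithmic weights $|\log\langle k\rangle|^m$ in \eqref{Gk1.1} and \eqref{Gk3.1} for $m$ up to $5$ are exactly what let all the iterated-by-parts kernels be integrated against powers $(\log|z-y_0|)^{j}$, $j\le m+2$, producing the $[\log\langle k\rangle]^{m+2}$ factor; the $|k|^{-1}$ gain comes from the $|k|^{-1}$ in \eqref{Gk3.1} for $\partial_{y,z}G_k'$ and the $|k|^2$ in \eqref{Gk1.1} beating the $|k|$ lost when $f'$ contributes a factor $|k|$ through the definition of $Y^{1,m}$.

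\emph{Main obstacle.} The delicate point is the bookkeeping of the logarithmic powers: each integration by parts in $z$ that removes a power of $(b(z)-b(y_0)+i\epsilon)^{-1}$ either lowers the pole order or raises the logarithmic power by one, and one must check that after all the integrations by parts the result still fits into exactly $Y^{1,m}\log + Y^{1,m+1}$ and not something weaker — in particular that the $f'$ term, which already sits at the edge of $Y^{1,m}$ with a $(1+m)$-th power of the shifted logarithm in the denominator, does not, after being multiplied by the $(b(z)-b(y_0)+i\epsilon)^{-1}$ coming from $\partial_y G_k$ and then integrated, generate more than $m+1$ logarithmic powers in the output. Keeping the shift $-\vartheta^{-1}$ in the logarithm (so that the shifted log is bounded below away from zero, using $\vartheta<1/10$) is what makes these ratios genuinely bounded and lets the $Y$-norms be used as multiplicative bounds rather than as quantities that could degenerate. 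Verifying this matching — together with checking the diagonal $\delta(y-z)$ term from \eqref{Gk1.2} contributes only a harmless $f(y)/(b(y)-b(y_0)+i\epsilon)$-type term that is absorbed into the $g\log$ part after one more integration by parts — is the heart of the argument; the rest is the routine kernel estimates \eqref{Gk1.1}, \eqref{Gk3.1} applied with $A = b(y_0)$ composed with $b$, i.e. after the change of variables implicit in comparing $|b(z)-b(y_0)|$ with $|z-y_0|$.
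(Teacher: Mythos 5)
Your first step — the $L^\infty$ bound on $Tf$ — contains a genuine error. You propose to estimate $|Tf(y)|\le\|f\|_{L^\infty}\int_0^1|G_k(y,z)|\,|b(z)-b(y_0)+i\epsilon|^{-1}dz$ and to control the region $|z-y_0|\lesssim 1/|k|$ via the pointwise bound $|G_k(y,z)|\lesssim|z-y_0|/|k|$. That bound is false: from \eqref{eq:GreenFunction}, $G_k(y,\cdot)$ vanishes at $z\in\{0,1\}$, not at $z=y_0$ (take $y=y_0=1/2$, where $G_k(1/2,z)\approx\frac{1}{2|k|}$ near $z=1/2$). Consequently the absolute-value integral behaves like $|k|^{-1}\log(1/(|k|\epsilon))$ and is \emph{not} uniform in $\epsilon$. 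In fact no bound of the form $\|Tf\|_{L^\infty}\lesssim|k|^{-1}\log\langle k\rangle\,\|f\|_{L^\infty}$ can hold uniformly in $\epsilon$ (test $f(z)=\mathrm{sign}(z-y_0)$ near $z=y_0$): the uniform boundedness of $Tf$ genuinely requires the $f'$ control built into $Y^{1,m}_{k,y_0,\epsilon}$, exploited through cancellation rather than absolute values. The paper's proof does exactly this at the outset: writing $\frac{1}{b(z)-b(y_0)+i\epsilon}=\frac{1}{b'(z)}\partial_z\log(b(z)-b(y_0)+i\epsilon)$ and integrating by parts \emph{before} taking absolute values yields \eqref{bX2}, after which the integrable weight $\log(b(z)-b(y_0)+i\epsilon)$ is handled by \eqref{Gk1.1}, with $\partial_z[G_k(y,z)f(z)/b'(z)]$ controlled since $Y^{1,m}$ controls $f'$ up to $|k|$ and powers of the shifted logarithm.

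For the derivative term your mechanism (integration by parts against $\partial_z\log$, kernel bounds \eqref{Gk1.1}--\eqref{Gk3.1}, the $\delta(y-z)$ producing the explicit $g\log$ piece) is the right one and matches the paper in spirit, but your order of operations — differentiate in $y$ first, then integrate by parts — manufactures a double pole $(b(z)-b(y_0)+i\epsilon)^{-2}$ that then has to be removed by a second integration by parts, and it is this detour that creates the bookkeeping difficulties you flag as the ``main obstacle.'' The paper avoids the double pole entirely: differentiate the already-integrated-by-parts formula \eqref{bX2} in $y$, use $\partial_y\partial_zG_k=\delta(y-z)+G_k'$ to get \eqref{bX4}, and read off $g=-f/b'\in Y^{1,m}$ (the coefficient of the explicit logarithm) and $h$ equal to the two remaining integrals, which land in $Y^{1,m+1}$ directly from \eqref{Gk1.1} and \eqref{Gk3.1}. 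You should restructure the argument in that order; as written, the $L^\infty$ step does not close.
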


\begin{proof}
 We normalize so that $\|f\|_{Y^{1,m}}:=1$.

 We first make the simple observation that
\begin{equation}\label{bX2}
Tf(y)=-\int_0^1\partial_z\left[G_k(y,z)\frac{f(z)}{b'(z)}\right]\log{(b(z)-b(y_0)+i\epsilon)}\,dz.
\end{equation}
It follows from \eqref{bX2} and \eqref{Gk1.1} that
\begin{equation}\label{bX3}
\|Tf\|_{L^{\infty}}\lesssim|k|^{-1}\big[\log{\langle k\rangle}\big]^{m+2}.
\end{equation}
We calculate
\begin{equation}\label{bX4}
\begin{split}
\partial_y(Tf)(y)=&-\partial_y\int_0^1\partial_zG_k(y,z)\frac{f(z)}{b'(z)}\log{(b(z)-b(y_0)+i\epsilon)}\,dz\\
                          &- \partial_y\int_0^1G_k(y,z)\partial_z\left[\frac{f(z)}{b'(z)}\right]\log{(b(z)-b(y_0)+i\epsilon)}\,dz\\                    
                        =&-\frac{f(y)}{b'(y)}\log{(b(y)-b(y_0)+i\epsilon)}-\int_0^1G_k'(y,z)\frac{f(z)}{b'(z)}\log{(b(z)-b(y_0)+i\epsilon)}\,dz\\
                          &-\int_0^1\partial_yG_k(y,z)\partial_z\left[\frac{f(z)}{b'(z)}\right]\log{(b(z)-b(y_0)+i\epsilon)}\,dz.
\end{split}
\end{equation}

From the definitions \eqref{Z1'}, the lemma follows easily from \eqref{bX4} and \eqref{Gk1.1}.\\
\end{proof}

To obtain better regularity of $\psi^{+}_{k,\epsilon}(y_0,y), \psi^{-}_{k,\epsilon}(y_0,y)$, we shall also need estimates of the operator $T_{k,y_0,\epsilon}$ in the more singular norms $X^2_{k,y_0,\epsilon}$ and $X^3_{k,y_0,\epsilon}$.

\begin{lemma}\label{X11}
There exists a constant $C>1$, independent of $\epsilon\in[-1/4,1/4]\backslash\{0\}$, $k\in\mathbb{Z}\backslash\{0\},y_0\in[0,1]$, such that
\begin{equation}\label{X12}
\left\|T_{k,y_0,\epsilon}f\right\|_{ Y^1_{k,y_0,\epsilon}}\leq C|k|^{-1}\big[\log{\langle k\rangle}\big]^4\left\|f\right\|_{X^2_{k,y_0,\epsilon}}, \qquad {\rm for\,\,any\,\,}f\in X^2_{k,y_0,\epsilon}.
\end{equation}
Moreover,
\begin{equation}\label{X12.1}
\left\|\partial_yT_{k,y_0,\epsilon}f\right\|_{X^{1,2}_{k,y_0,\epsilon}}\leq C\big[\log{\langle k\rangle}\big]^4\left\|f\right\|_{X^2_{k,y_0,\epsilon}}, \qquad {\rm for\,\,any\,\,}f\in X^2_{k,y_0,\epsilon}.
\end{equation}
%Moreover, $T:X^2_{k,y_0,\epsilon}\to X^2_{k,y_0,\epsilon}+Y^1_{k,y_0,\epsilon}$ is compact.
\end{lemma}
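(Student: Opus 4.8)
The plan is to prove Lemma \ref{X11} by the same "integrate by parts against the logarithm" mechanism used in Lemma \ref{bX1}, but now applied to an input $f$ which already carries a factor $\log(b(y)-b(y_0)+i\epsilon)-\vartheta^{-1}$. Write $f=g(y)\big[\log(b(y)-b(y_0)+i\epsilon)-\vartheta^{-1}\big]$ with $\|g\|_{Z^1}$ normalized to $1$ (by the definition \eqref{X9'} of $\|\cdot\|_{X^2}$), and correspondingly split $g'=g_1(y)\log(b(y)-b(y_0)+i\epsilon)+g_2$ with $\|g_1\|_{Y^1}+\|g_2\|_{Y^{1,2}}\lesssim |k|$ from \eqref{Z1'}. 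The key algebraic identity, generalizing \eqref{bX2}, is that integrating $G_k(y,z)f(z)/(b(z)-b(y_0)+i\epsilon)$ by parts once produces, besides harmless terms, a factor $\tfrac12\big[\log(b(z)-b(y_0)+i\epsilon)\big]^2$: concretely,
\[
\frac{\log(b(z)-b(y_0)+i\epsilon)-\vartheta^{-1}}{b(z)-b(y_0)+i\epsilon}=\frac{1}{b'(z)}\,\partial_z\!\left[\frac{\big(\log(b(z)-b(y_0)+i\epsilon)\big)^2}{2}-\vartheta^{-1}\log(b(z)-b(y_0)+i\epsilon)\right].
\]
So $Tf(y)=-\int_0^1 \partial_z\!\big[G_k(y,z)g(z)/b'(z)\big]\,\big[\tfrac12(\log)^2-\vartheta^{-1}\log\big]\,dz$, where $\log$ abbreviates $\log(b(z)-b(y_0)+i\epsilon)$. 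Now estimate: the $L^\infty$ bound of $Tf$ follows from \eqref{Gk1.1} with $m$ up to $4$, since the integrand carries at most two powers of $\log$ and the derivative hitting $g/b'$ brings another $\log$ via $g_1$, giving $|\log\langle k\rangle|^{\le 4}$ after integration in $z$. For the $Y^1$ part we differentiate in $y$; as in \eqref{bX4}, the derivative either lands on $\partial_z G_k$ (producing the boundary term $-g(y)/b'(y)\cdot[\tfrac12(\log(b(y)-\cdots))^2-\vartheta^{-1}\log(\cdots)]$ plus a $G_k'$ integral) or on $G_k$ directly. The boundary term is exactly of the form $(\text{bounded})\times(\log(b(y)-b(y_0)+i\epsilon)-\vartheta^{-1})^{2}$, which is what $Y^1=Y^{1,1}$ tolerates for $f'/|k|$ after dividing by $(\log-\vartheta^{-1})^{2}$; the remaining integrals are bounded by $|k|^{-1}|\log\langle k\rangle|^{\le 4}$ using \eqref{Gk1.1}, \eqref{Gk3.1}, completing \eqref{X12}.

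For \eqref{X12.1} I want $\partial_y Tf$ measured in $X^{1,2}$, i.e. written as $\sum_{j=0}^2 h_j(y)[\log(b(y)-b(y_0)+i\epsilon)]^j$ with $\|h_0\|_{Y^{1,3}}+\|h_1\|_{Y^{1,2}}+\|h_2\|_{Y^{1,1}}\lesssim [\log\langle k\rangle]^4$. The natural choice is to read off from the computation of $\partial_y Tf$ above: the boundary term $-\tfrac{g(y)}{b'(y)}\big[\tfrac12(\log)^2-\vartheta^{-1}\log\big]$ contributes $h_2=-\tfrac{g(y)}{2b'(y)}$, $h_1=\tfrac{\vartheta^{-1}g(y)}{b'(y)}$, and the two $z$-integrals (against $G_k'$ and against $\partial_yG_k$ with $g'$ split via $g_1,g_2$) are honestly $\log$-free in $y$ and so go into $h_0$. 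One then checks each $h_j$ lies in the required $Y^{1,m-j+1}$ space: $h_2$ involves $g/b'$, whose derivative involves $g'=g_1\log+g_2$, so $h_2'/|k|$ divided by $(\log-\vartheta^{-1})^{2}$ is controlled because $\|g_1\|_{Y^1},\|g_2\|_{Y^{1,2}}\lesssim|k|$; the $Y^{1,3}$ room for $h_0$ absorbs the derivative of the Green's-function integrals, each extra $\log$ coming either from differentiating under the integral (which produces a boundary $\log(b(y)-\cdots)$, acceptable since $Y^{1,3}$ permits a $(\log-\vartheta^{-1})^{4}$ denominator mismatch — here only up to cube) or from the $g_1$ branch of $g'$.

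The main obstacle, I expect, is bookkeeping rather than any single hard estimate: one must be careful that every time a $z$-derivative or a $y$-derivative acts, the number of accumulated logarithms does not exceed the budget built into the target norm ($m+1$ powers allowed in the denominator of $Y^{1,m}$, and the shifted-by-$\vartheta^{-1}$ normalization in \eqref{Y1'} matters for keeping the boundary term in the right class). In particular the passage $g'=g_1\log+g_2$ from $\|g\|_{Z^1}\lesssim 1$ is what lets the single derivative on $g/b'$ cost only one $\log$ with a $Y^1$-coefficient, which is exactly tight for landing $h_2$ in $Y^{1,1}$ and $h_1$ in $Y^{1,2}$; if one naively used $\|g\|_{Y^1}$ instead one would lose a logarithm and the $X^{1,2}$ bound would fail. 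A secondary technical point is handling the $\delta(y-z)$ in $\partial_y\partial_z G_k=\delta(y-z)+G_k'(y,z)$ from \eqref{Gk1.2} — but this is precisely the source of the explicit boundary term $-g(y)/b'(y)\cdot[\cdots]$ and is benign. Everything else reduces to repeated application of the weighted-$L^1$ bounds \eqref{Gk1.1} and \eqref{Gk3.1} with $m\le 5$, together with the elementary inequality $|\log(b(y)-b(y_0)+i\epsilon)-\vartheta^{-1}|\gtrsim 1$ uniformly (from \eqref{A}), which guarantees the denominators in the $Y^{1,m}$ norms never vanish.
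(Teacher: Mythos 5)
Your proposal is correct and follows essentially the same route as the paper: the same integration by parts based on the identity $\big[\log(b(z)-b(y_0)+i\epsilon)-\vartheta^{-1}\big]/(b(z)-b(y_0)+i\epsilon)=\tfrac{1}{b'(z)}\partial_z\big[\tfrac12(\log(\cdots)-\vartheta^{-1})^2\big]$ (which the paper keeps as a square rather than expanding), the same extraction of the local term $-\tfrac{g(y)}{2b'(y)}(\log(\cdots)-\vartheta^{-1})^2$ from $\partial_y\partial_zG_k=\delta(y-z)+G_k'$, and the same appeal to \eqref{Gk1.1}, \eqref{Gk3.1} and the splitting $g'=g_1\log+g_2$ from $\|g\|_{Z^1}\lesssim1$. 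The log-counting is slightly generous in places but the stated bounds still follow, so no changes are needed.
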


\begin{proof}
We normalize so that $\|f\|_{X^2}:=1$.
 
 %By Lemma \ref{bX1}, the case when $f\in Y^1$, the desired bounds \eqref{X12} is obvious. Hence we consider only the case $f\in X^2$.
 
We first make the simple observation that for $f(y)=g(y)\big[\log{(b(y)-b(y_0)+i\epsilon)}-\vartheta^{-1}\big]$ with $\|g\|_{Z^1}\lesssim1$,  
\begin{equation}\label{X13}
\begin{split}
Tf(y)=&-\frac{1}{2}\int_0^1\partial_zG_k(y,z) \frac{g(z)}{b'(z)}\big[\log{(b(z)-b(y_0)+i\epsilon)}-\vartheta^{-1}\big]^2\,dz\\
         &-\frac{1}{2}\int_0^1G_k(y,z) \partial_z\left[\frac{g(z)}{b'(z)}\right]\big[\log{(b(z)-b(y_0)+i\epsilon)}-\vartheta^{-1}\big]^2\,dz.
         \end{split}
\end{equation}
Thus
\begin{equation}\label{X14}
\|Tf\|_{L^{\infty}}\lesssim|k|^{-1}\big[\log{\langle k\rangle}\big]^4.
\end{equation}

We now calculate
\begin{equation}\label{X15}
\begin{split}
\partial_y(Tf)(y)=&-\frac{g(y)}{2b'(y)}\big[\log{(b(y)-b(y_0)+i\epsilon)}-\vartheta^{-1}\big]^2\\
                          &-\int_0^1G'_k(y,z) \frac{g(z)}{2b'(z)}\big[\log{(b(z)-b(y_0)+i\epsilon)}-\vartheta^{-1}\big]^2\,dz\\
                          &-\int_0^1\partial_yG_k(y,z) \partial_z\left[\frac{g(z)}{2b'(z)}\right]\big[\log{(b(z)-b(y_0)+i\epsilon)}-\vartheta^{-1}\big]^2\,dz.
 \end{split}
\end{equation}

From \eqref{X14}-\eqref{X15}, the bounds \eqref{Gk1.1} and \eqref{Gk3.1}, it follows that  
$$\|Tf\|_{Y^1}\lesssim |k|^{-1}\big[\log{\langle k\rangle}\big]^4,$$
 which completes the proof of \eqref{X12}. \eqref{X12.1} follows from \eqref{X15} by definitions.\\
\end{proof}

We shall also need the following variant of Lemma \ref{X11}.
\begin{lemma}\label{bX1m}
There exists a constant $C>1$, independent of $\epsilon\in[-1/4,1/4]\backslash\{0\}$, $k\in\mathbb{Z}\backslash\{0\},y_0\in[0,1]$ and integers $1\leq m\leq 5$, such that
\begin{equation}\label{bX1.0m}
\left\|T_{k,y_0,\epsilon}\,f\right\|_{Y^{1,m}_{k,y_0,\epsilon}}\leq C|k|^{-1}\big[\log{\langle k\rangle}\big]^{m+3} \left\|f\right\|_{X^{1,m}_{k,y_0,\epsilon}}.
\end{equation}
Moreover,
\begin{equation}\label{bX1.1m}
\left\|\partial_yT_{k,y_0,\epsilon}\,f\right\|_{X^{1,m+1}_{k,y_0,\epsilon}}\leq C\big[\log{\langle k\rangle}\big]^{m+3} \left\|f\right\|_{X^{1,m}_{k,y_0,\epsilon}}.
\end{equation}
\end{lemma}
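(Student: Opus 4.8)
The statement to be proven is Lemma \ref{bX1m}, which extends Lemma \ref{bX1} to source terms of the form $f = \sum_{j=0}^m g_j(y)[\log(b(y)-b(y_0)+i\epsilon)]^j$, i.e.\ $f \in X^{1,m}_{k,y_0,\epsilon}$. The plan is to proceed by linearity, treating each term $g_j(y)[\log(b(y)-b(y_0)+i\epsilon)]^j$ separately and summing at the end, so we may assume $f(y) = g(y)[\log(b(y)-b(y_0)+i\epsilon)]^j$ for a fixed $0 \le j \le m$ with $\|g\|_{Y^{1,m-j+1}_{k,y_0,\epsilon}} \lesssim 1$. The key mechanism, exactly as in the proofs of Lemmas \ref{bX1} and \ref{X11}, is the ``integration by parts'' identity: since $\partial_z \log(b(z)-b(y_0)+i\epsilon) = b'(z)/(b(z)-b(y_0)+i\epsilon)$, the singular factor $1/(b(z)-b(y_0)+i\epsilon)$ in $Tf$ can be converted into one extra power of $\log(b(z)-b(y_0)+i\epsilon)$ at the cost of transferring a $\partial_z$ onto the smooth envelope $G_k(y,z)g(z)/b'(z)$. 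Concretely,
\begin{equation}\label{ibp-plan}
T f(y) = -\frac{1}{j+1}\int_0^1 \partial_z\!\left[G_k(y,z)\frac{g(z)}{b'(z)}\right]\big[\log(b(z)-b(y_0)+i\epsilon)\big]^{j+1}\,dz,
\end{equation}
using $\int_0^1 G_k(y,z)g(z)/(b(z)-b(y_0)+i\epsilon)[\log(\cdot)]^j\,dz = \int_0^1 G_k(y,z)g(z)/b'(z)\cdot \partial_z\{[\log(\cdot)]^{j+1}/(j+1)\}\,dz$ and integrating by parts (the boundary terms vanish because $G_k(y,0)=G_k(y,1)=0$).

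First I would establish the $L^\infty$ bound on $Tf$: expanding the $\partial_z$ in \eqref{ibp-plan}, one term has $\partial_z G_k(y,z)$ and the other has $G_k(y,z)\partial_z[g(z)/b'(z)]$; since $b \in C^4$ and $|b'| \gtrsim \vartheta$ by \eqref{A}, the factor $g(z)/b'(z)$ is controlled with one derivative by $\|g\|_{Y^{1,m-j+1}}$, and the logarithmic weight $[\log(b(z)-b(y_0)+i\epsilon)]^{j+1}$ is dominated pointwise in $z$ by $\langle \log|z-b(y_0)|\rangle^{j+1}$ up to constants (here one uses that $|b(z)-b(y_0)+i\epsilon|$ is comparable to $|z - y_0|$ for $|z-y_0|$ not too small, and bounded below by $|\epsilon|$ otherwise, so the log is at worst $\log|z-y_0|$ in magnitude). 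Applying the $L^1$ Green's-function bounds \eqref{Gk1.1} with $m \leadsto j+1 \le m+1 \le 6$ — note \eqref{Gk1.1} is stated for $m \in \{0,\dots,5\}$, which after shifting covers what is needed here since $j+1$ is at most $m+1\le 6$; if $j = m = 5$ one needs the weight to power $6$, so I would either note \eqref{Gk1.1} should be read with the same proof giving one more power, or more cleanly absorb it since $[\log\langle k\rangle]^{m+3}$ has room — yields $\|Tf\|_{L^\infty} \lesssim |k|^{-1}[\log\langle k\rangle]^{j+3}$. Summing over $j \le m$ gives $|k|^{-1}[\log\langle k\rangle]^{m+3}$.

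Next I would compute $\partial_y(Tf)$ by differentiating \eqref{ibp-plan} under the integral sign. Using $\partial_y \partial_z G_k(y,z) = \delta(y-z) + G_k'(y,z)$ from \eqref{Gk1.2} and $\partial_y G_k(y,z)$ for the second piece, one gets
\begin{equation}\label{dy-plan}
\partial_y(Tf)(y) = -\frac{1}{j+1}\frac{g(y)}{b'(y)}\big[\log(b(y)-b(y_0)+i\epsilon)\big]^{j+1} + (\text{integral terms}),
\end{equation}
where the integral terms involve $G_k'(y,z)$ and $\partial_y G_k(y,z)$ against $[\log(b(z)-b(y_0)+i\epsilon)]^{j+1}$ times (derivatives of) the smooth envelope, and are bounded in $L^\infty$ by $[\log\langle k\rangle]^{j+3}$ using \eqref{Gk1.1} and \eqref{Gk3.1}. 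The point of \eqref{dy-plan} is that the leading non-integrated term has exactly the structure of an element of $X^{1,m+1}_{k,y_0,\epsilon}$ with envelope $g(y)/(b'(y)(j+1))$ and log-power $j+1 \le m+1$; its $Y^{1,(m+1)-(j+1)+1}_{k,y_0,\epsilon} = Y^{1,m-j+1}_{k,y_0,\epsilon}$ norm is controlled by $\|g\|_{Y^{1,m-j+1}} \lesssim 1$ after accounting for the smooth multiplier $1/b'$, which costs a harmless constant (differentiating it produces $b''/(b')^2$, still bounded by \eqref{A} and $b \in C^4$; composing with the structure of the $Y^{1,\ell}$ norm, which only involves $\|\cdot\|_{L^\infty}$ and $\|(\cdot)'/|k| \div (\log - \vartheta^{-1})^{1+\ell}\|_{L^\infty}$, one checks multiplication by a fixed $C^1$ function is bounded). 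The integral terms, being genuinely bounded (no worse than $\log$ singularities absorbed into the envelope), likewise define an element of $X^{1,m+1}$ — indeed of the better space $X^{1,j+1}$ — with the stated norm bound. Summing over $j$, dividing by $|k|$ where appropriate, and matching the claimed powers gives both \eqref{bX1.0m} and \eqref{bX1.1m}.

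\textbf{Main obstacle.} The routine part is the $L^\infty$ and Green's-function estimates; the genuinely delicate point is the bookkeeping of which weighted space each term lands in — specifically, verifying that the leading term in \eqref{dy-plan} and the integral remainders are all controlled in the $X^{1,m+1}_{k,y_0,\epsilon}$ norm \eqref{X1.1}–\eqref{X1}, which requires decomposing each integral remainder into the form $\sum g_i(y)[\log(\cdot)]^i$ and checking the $Y^{1,m+1-i+1}$ norm of each piece. The subtlety is that $\partial_y$ of an integral term like $\int G_k'(y,z)(\cdots)\,dz$ will, upon the next differentiation needed to estimate the $Y$-norm, again produce a non-integrable-looking $\delta(y-z)$ contribution via \eqref{Gk1.2} (since $\partial_y G_k' $ contains $\partial_y\partial_z$-type terms), so one must carefully track that these reduce to bounded envelope terms and do not generate uncontrolled log-powers beyond $m+1$. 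Keeping the count of log-powers at each stage bounded by $m+1$ — using that we started from $X^{1,m}$ and the integration by parts \eqref{ibp-plan} adds exactly one — is the crux, and is what forces the $m \le 5$ restriction (so that the needed Green's-function weighted bounds \eqref{Gk1.1}, \eqref{Gk3.1} remain available).
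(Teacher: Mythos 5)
Your proposal is correct and follows essentially the same route as the paper, which omits the proof of Lemma \ref{bX1m} by pointing to Lemma \ref{X11}: your identity \eqref{ibp-plan} is exactly the $j$-th power generalization of \eqref{X13}, and your formula for $\partial_y(Tf)$ with the local term plus $G_k'$ and $\partial_yG_k$ integral remainders mirrors \eqref{X15} and \eqref{bX4}, with the correct bookkeeping placing the local term $-\tfrac{1}{j+1}\tfrac{g(y)}{b'(y)}[\log(\cdot)]^{j+1}$ in the $j+1$ slot of $X^{1,m+1}_{k,y_0,\epsilon}$ with envelope in $Y^{1,m-j+1}_{k,y_0,\epsilon}$.
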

The proof follows similar calculations as in the proof of Lemma \ref{X11}; we omit the repetitive details.\\

We now prove estimates on $T_{k,y_0,\epsilon}$ in the most singular norms we shall use.
\begin{lemma}\label{bX17}
There exists a constant $C>1$, independent of $\epsilon\in[-1/4,1/4]\backslash\{0\}$, $k\in\mathbb{Z}\backslash\{0\},y_0\in[0,1]$, such that for any 
$$f=\frac{g}{b(y)-b(y_0)+i\epsilon}$$
it holds that
\begin{equation}\label{X17}
\begin{split}
\bigg\|T_{k,y_0,\epsilon}f-\frac{g(y)}{|b'(y)|^2}\log{(b(y)-b(y_0)+i\epsilon)}+\mathcal{B}\,\bigg\|_{Y^1_{k,y_0,\epsilon}}\leq C|k|^{-1}\big[\log{\langle k\rangle}\big]^4\left\|f\right\|_{X^3_{k,y_0,\epsilon}};
\end{split}
\end{equation}
and
\begin{equation}\label{X17.0}
\left\|\partial_{y}T_{k,y_0,\epsilon}f(y)-\frac{g(y)}{b'(y)(b(y)-b(y_0)+i\epsilon)}+\partial_y\mathcal{B}\,\right\|_{X^{1,2}_{k,y_0,\epsilon}}\leq C \big[ \log{\langle k\rangle}\big]^4\|f\|_{X^3_{k,y_0,\epsilon}}.
\end{equation}
In the above we have set the boundary terms $\mathcal{B}$ as
\begin{equation}\label{Bt}
\mathcal{B}:=g(1)\frac{\sinh{(ky)}}{|b'(1)|^2\sinh{k}}\log{(b(1)-b(y_0)+i\epsilon)}+g(0)\frac{\sinh{(k(1-y))}}{|b'(0)|^2\sinh{k}}\log{(b(0)-b(y_0)+i\epsilon)}.
\end{equation}

 As a corollary of \eqref{X17}, for $f$ with $f(0)=f(1)=0$  we also have
\begin{equation}\label{X17.1}
|k|\left\|T_{k,y_0,\epsilon}f\right\|_{X^2_{k,y_0,\epsilon}+Y^1_{k,y_0,\epsilon}}+\left\|\partial_yT_{k,y_0,\epsilon}f\right\|_{X^3_{k,y_0,\epsilon}+X^{1,2}_{k,y_0,\epsilon}}\leq C\big[\log{\langle k\rangle}\big]^4\left\|f\right\|_{X^3_{k,y_0,\epsilon}}.
\end{equation}

\end{lemma}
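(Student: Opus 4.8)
The plan is to reduce everything to the already-proved Lemmas \ref{bX1} and \ref{X11} by extracting, from the singular input $f=g/(b(y)-b(y_0)+i\epsilon)$, the explicit ``most singular'' piece of $T_{k,y_0,\epsilon}f$ and showing the remainder lies in the well-behaved spaces. First I would write $g(z)/(b(z)-b(y_0)+i\epsilon) = \partial_z[\,\text{something}\,]$; more precisely, since $\partial_z\log(b(z)-b(y_0)+i\epsilon) = b'(z)/(b(z)-b(y_0)+i\epsilon)$, I have
\begin{equation}\label{plan1}
\frac{g(z)}{b(z)-b(y_0)+i\epsilon}=\partial_z\Big[\frac{g(z)}{b'(z)}\log(b(z)-b(y_0)+i\epsilon)\Big]-\partial_z\Big[\frac{g(z)}{b'(z)}\Big]\log(b(z)-b(y_0)+i\epsilon).
\end{equation}
Plugging this into the definition \eqref{T1} of $T_{k,y_0,\epsilon}f$ and integrating by parts in $z$, the first term on the right of \eqref{plan1} produces a genuine boundary contribution at $z=0,1$ (since $G_k(y,z)$ does not vanish there as a function of $z$ — wait, it does vanish; rather it is $\partial_z$ hitting $G_k$ plus the boundary evaluation of $G_k(y,z)\cdot[\ldots]$ which vanishes because $G_k(y,0)=G_k(y,1)=0$). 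The non-vanishing boundary data instead comes out when I next integrate by parts the term $-\int_0^1 \partial_zG_k(y,z)\frac{g(z)}{b'(z)}\log(\cdots)\,dz$: here $\partial_zG_k(y,z)$ is bounded but discontinuous across $z=y$, contributing the local term $-\frac{g(y)}{b'(y)}\cdot\frac{1}{b'(y)}\log(b(y)-b(y_0)+i\epsilon)$ (using $\partial_zG_k(y,z)|_{z=y^-}-\partial_zG_k(y,z)|_{z=y^+}=1$ up to the sign bookkeeping in \eqref{bX5}), i.e. exactly the claimed main term $\frac{g(y)}{|b'(y)|^2}\log(b(y)-b(y_0)+i\epsilon)$, while the $G_k'$ piece, evaluated with its non-vanishing boundary behavior, yields $\mathcal{B}$ as in \eqref{Bt}. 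Everything else is an integral of $G_k$ or $G_k'$ against $\frac{g}{b'}$ or $\partial_z(\frac{g}{b'})$ times one or two powers of $\log(b(z)-b(y_0)+i\epsilon)$, which is precisely the situation handled in Lemmas \ref{bX1} and \ref{X11} (after recognizing $g\in Z^1$, as encoded in the definition \eqref{X10} of $\|f\|_{X^3}$).

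So the key steps, in order, are: (i) use the algebraic identity \eqref{plan1} to rewrite $T_{k,y_0,\epsilon}f$ as a sum of integrals against $G_k(y,z)$ of functions involving at most $(\log(b(z)-b(y_0)+i\epsilon))^2$; (ii) integrate by parts the $\partial_zG_k$ term to extract the jump contribution at $z=y$, giving the local main term, and keep careful track of the boundary terms at $z=0,1$, which assemble into $\mathcal{B}$ — here I use the explicit formulas \eqref{eq:GreenFunction}, \eqref{bX5}, \eqref{Gk1.2} and the values $\sinh(ky)/\sinh k$, $\sinh(k(1-y))/\sinh k$; (iii) estimate the remaining integrals by $\|G_k(\log)^m\|_{L^1}$ and $\|G_k'(\log)^m\|_{L^1}$ bounds \eqref{Gk1.1}, \eqref{Gk3.1}, exactly as in Lemmas \ref{bX1} and \ref{X11}, to get the $Y^1$ bound \eqref{X17}; (iv) differentiate in $y$ and observe that $\partial_y$ hitting the $\partial_zG_k$ integral again produces a jump term $\frac{g(y)}{b'(y)(b(y)-b(y_0)+i\epsilon)}$ (the claimed main term of the derivative) plus $G_k'$-integrals and $\partial_yG_k$-integrals of things with at most two logs, which by definition of $X^{1,2}$ (see \eqref{X1.1}) are controlled — this gives \eqref{X17.0}; (v) finally, for the corollary \eqref{X17.1}, when $f(0)=f(1)=0$ we have $g(0)=g(1)=0$, so $\mathcal{B}\equiv0$, and moving the extracted main terms $\frac{g}{|b'|^2}\log(\cdots)$ and $\frac{g}{b'(b-b(y_0)+i\epsilon)}$ to the left and recognizing them as elements of $X^2$ (via \eqref{X9'}, after absorbing the constant $\vartheta^{-1}$ shift, which only costs an $L^\infty$ term) and of $X^3$ (via \eqref{X10}) respectively, with norm $\lesssim\|g\|_{Z^1}=|k|^{-1}\|f\|_{X^3}$, yields the stated bound.

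The main obstacle I expect is the careful bookkeeping of the boundary and jump terms in steps (ii) and (iv): one must be scrupulous about which integration-by-parts boundary evaluations vanish (those with a bare $G_k(y,z)$ factor, since $G_k(y,0)=G_k(y,1)=0$) versus which do not (those with $\partial_zG_k$ or $G_k'$, whose $z=0,1$ values are $\mp k\cosh(k(1-y))$ type expressions, producing the $\sinh(ky)/\sinh k$ and $\sinh(k(1-y))/\sinh k$ profiles in $\mathcal{B}$), and to correctly identify the coefficient $1/|b'(y)|^2$ of the local logarithmic singularity coming from the unit jump of $\partial_zG_k$ across $z=y$ combined with the two factors of $1/b'$. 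A secondary subtlety is verifying that after extracting $\frac{g}{|b'|^2}\log(\cdots)$ the leftover genuinely lands in $Y^1$ and not merely in $X^{1,2}$: this works because each leftover integrand carries at most one extra power of $\log$ beyond what \eqref{plan1} already spent, and $b\in C^4$ keeps $1/b'$ and its derivative bounded by \eqref{A}, so the $m=1$ case of \eqref{Gk1.1}–\eqref{Gk3.1} suffices. Once the identities are set up correctly, the estimates themselves are a direct transcription of the arguments in Lemmas \ref{bX1}, \ref{X11} and \ref{bX1m}, so no new analytic input is needed.
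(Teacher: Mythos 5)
Your proposal founders at the very first step: you have misidentified the operator you need to analyze. By \eqref{T1}, $T_{k,y_0,\epsilon}f(y)=\int_0^1G_k(y,z)\frac{f(z)}{b(z)-b(y_0)+i\epsilon}\,dz$, and since here $f=g/(b-b(y_0)+i\epsilon)$, the integrand is $G_k(y,z)\,g(z)/(b(z)-b(y_0)+i\epsilon)^2$ — a \emph{double} pole. Your identity \eqref{plan1} rewrites the single pole $g/(b-b(y_0)+i\epsilon)$ and you then integrate by parts once against $G_k$; that is exactly the computation of Lemma \ref{bX1} applied to $g$, i.e.\ you are analyzing $T_{k,y_0,\epsilon}g$, not $T_{k,y_0,\epsilon}f$. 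For a single pole, one integration by parts already renders every remaining integrand an integrable $\log$, so no local singular term $\frac{g(y)}{|b'(y)|^2}\log(b(y)-b(y_0)+i\epsilon)$ and no boundary term $\mathcal{B}$ can appear — which contradicts the statement you are trying to prove and signals that the setup is wrong. The correct argument (which is what the paper does) needs \emph{two} integrations by parts: the first writes $\frac{g}{(b-b(y_0)+i\epsilon)^2}=-\frac{g}{b'}\partial_z\frac{1}{b-b(y_0)+i\epsilon}$ and moves $\partial_z$ onto $G_kg/b'$ (boundary terms vanish since $G_k(y,0)=G_k(y,1)=0$), leaving $\int\partial_zG_k\frac{g}{b'(b-b(y_0)+i\epsilon)}+\int\partial_z(\frac{g}{b'})\frac{G_k}{b-b(y_0)+i\epsilon}$; the second converts the remaining single pole in the $\partial_zG_k$ term into $\partial_z\log$ and moves $\partial_z$ onto $\partial_zG_k\frac{g}{|b'|^2}$. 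It is only at this second stage that $\partial_z^2G_k=k^2G_k-\delta(z-y)$ produces the local term and the nonvanishing endpoint values $\partial_zG_k(y,1)=-\sinh(ky)/\sinh k$, $\partial_zG_k(y,0)=\sinh(k(1-y))/\sinh k$ produce $\mathcal{B}$.

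Relatedly, the mechanism you describe for extracting the main term — ``integrate by parts the term $-\int\partial_zG_k\frac{g}{b'}\log(\cdots)\,dz$ and use the jump of $\partial_zG_k$ across $z=y$'' — is not a valid operation: to exploit the jump you must place a $\partial_z$ on $\partial_zG_k$, which forces you to \emph{anti}-differentiate the co-factor $\frac{g}{b'}\log(\cdots)$, and that does not reproduce the claimed term. In the correct computation the co-factor multiplying $\partial_z^2G_k$ is $\frac{g}{|b'|^2}\log(\cdots)$ as it stands, because the $\log$ is generated by the second integration by parts itself. Finally, even with the right setup, your step (iii) glosses over a genuine difficulty: the term $\int_0^1\partial_z\big[\frac{g}{b'}\big]\frac{G_k(y,z)}{b(z)-b(y_0)+i\epsilon}\,dz$ still carries an un-integrated pole and cannot be estimated by the $L^1$ bounds \eqref{Gk1.1} alone; one must use the $Z^1$ structure of $g$ (write $g'=h_1\log(b(y)-b(y_0)+i\epsilon)+h_2$ as in \eqref{Z1'}) and integrate by parts once more, producing $[\log(\cdots)]^2$ weights. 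This is precisely the term the paper flags as the most difficult, and your outline does not account for it.
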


\begin{proof}
We normalize so that $\|f\|_{X^3}:=1$. Clearly \eqref{X17.1} is a consequence of \eqref{X17}-\eqref{X17.0} and the definitions, since $\mathcal{B}=0$ in this case.

 We begin with the proof of \eqref{X17}.
 We first make the simple observation that for $f(y)=g(y)/\big[b(y)-b(y_0)+i\epsilon\big]$ with $\|g\|_{Z^1}\lesssim|k|^{-1}$, the following identity holds
\begin{equation}\label{X18}
\begin{split}
Tf(y)=&\int_0^1\partial_zG_k(y,z) \frac{g(z)}{b'(z)\big[b(z)-b(y_0)+i\epsilon\big]}\,dz+\int_0^1\partial_z\left[\frac{g(z)}{b'(z)}\right]\frac{G_k(y,z) }{b(z)-b(y_0)+i\epsilon}\,dz\\
        =&-\mathcal{B}+\frac{g(y)}{|b'(y)|^2}\log{(b(y)-b(y_0)+i\epsilon)}+\int_0^1\partial_z\left[\frac{g(z)}{b'(z)}\right]\frac{G_k(y,z) }{b(z)-b(y_0)+i\epsilon}\,dz\\
          &\qquad-k^2\int_0^1G_k(y,z)\frac{g(z)\log{(b(z)-b(y_0)+i\epsilon)}}{|b'(z)|^2}\,dz\\
          &\qquad-\int_0^1\partial_zG_k(y,z)\partial_z\left[ \frac{g(z)}{|b'(z)|^2}\right] \log{(b(z)-b(y_0)+i\epsilon)}\,dz:=-\mathcal{B}+\mathcal{T}_1+\mathcal{T}_2+\mathcal{T}_3+\mathcal{T}_4.
         \end{split}
\end{equation}
It is sufficient to bound $\mathcal{T}_i, i\in\{2,3,4\}$. We will show that $\mathcal{T}_i, i\in\{2,3,4\}$ satisfy
\begin{equation}\label{X19}
\|\mathcal{T}_i\|_{Y^1}\lesssim |k|^{-1}\big[\log{\langle k\rangle}\big]^4, \qquad {\rm for}\,\,i\in\{2,3,4\}.
\end{equation}
 The most difficult term is
\begin{equation}\label{X20}
\mathcal{T}_2':=\int_0^1\frac{g'(z)}{b'(z)}\frac{G_k(y,z) }{b(z)-b(y_0)+i\epsilon}\,dz=\int_0^1\frac{g'(z)}{|b'(z)|^2}G_k(y,z) \partial_z\log{(b(z)-b(y_0)+i\epsilon)}\,dz
\end{equation} 
 from $\mathcal{T}_2$, which involves additional cancellations. Since $\|g\|_{Z^1}\leq |k|^{-1}$, we can write
 \begin{equation}\label{X20.1}
 g'(y)=h_1(y)\log{(b(y)-b(y_0)+i\epsilon)}+h_2(y),\qquad {\rm with}\,\,\|h_1\|_{Y^1}+\|h_2\|_{Y^{1,2}}\lesssim1.
 \end{equation}
 We use integration by parts and obtain that
 \begin{equation}\label{X21}
 \begin{split}
 -\mathcal{T}_2'=&\frac{1}{2}\int_0^1\partial_z\left[\frac{h_1(z)}{b'(z)^2}\right]G_k(y,z)\big[\log{(b(z)-b(y_0)+i\epsilon)}\big]^2\,dz\\
                          &\qquad+\frac{1}{2}\int_0^1\frac{h_1(z)}{b'(z)^2}\partial_zG_k(y,z)\big[\log{(b(z)-b(y_0)+i\epsilon)}\big]^2\,dz\\
                          &+\int_0^1\partial_z\left[\frac{h_2(z)}{b'(z)^2}\right]G_k(y,z)\log{(b(z)-b(y_0)+i\epsilon)}\,dz\\
                          &\qquad+\int_0^1\frac{h_2(z)}{b'(z)^2}\partial_zG_k(y,z)\log{(b(z)-b(y_0)+i\epsilon)}\,dz
 \end{split}
 \end{equation}
 The desired bounds \eqref{X19} follow from \eqref{X21} and the bounds \eqref{X20.1}.
 
 Finally, the proof of \eqref{X17.0} follows from taking derivatives in $y$ in \eqref{X18} and \eqref{X21}, and estimating the resulting terms in a straightforward fashion. We omit the routine details.
\end{proof}

\section{The limiting absorption principle}
Define for each $\epsilon\in[-1/4,1/4]\backslash\{0\}, k\in\mathbb{Z}\backslash\{0\},y_0\in[0,1]$ the operator 
\begin{equation}\label{Sep}
S_{k,y_0,\epsilon}f(y):=-\int_0^1G_k(y,z)\frac{b''(z)f(z)}{b(z)-(y_0)+i\epsilon}\,dz.
\end{equation}

The following lemma is the main tool we shall use to obtain estimates on $\psi^{\iota}_{k,\epsilon}(y,y_0)$.
\begin{lemma}\label{X22}
For each $\epsilon\in[-1/4,1/4]\backslash\{0\}, k\in\mathbb{Z}\backslash\{0\},y_0\in[0,1]$ and $m\in\{1,2,3,4,5\}$, let $V_{k,y_0,\epsilon}\in\{Y^1_{k,y_0,\epsilon}, X^2_{k,y_0,\epsilon}+Y^1_{k,y_0,\epsilon}, X^{1,m}_{k,y_0,\epsilon}\}$. There exists $\delta>0$ such that for sufficiently small $\epsilon\neq0$
\begin{equation}\label{X23}
\big\|(I-S_{k,y_0,\epsilon})f\big\|_{V_{k,y_0,\epsilon}}\ge \delta \big\|f\big\|_{V_{k,y_0,\epsilon}},
\end{equation}
for any $k\in\mathbb{Z}\backslash\{0\},y_0\in[0,1]$.
\end{lemma}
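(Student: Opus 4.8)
\textbf{Proof proposal for Lemma \ref{X22}.}

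The plan is to establish the lower bound \eqref{X23} by a Fredholm-type dichotomy, reducing it to the injectivity of $I - S_{k,y_0,0}$ on the boundary $\epsilon \to 0$, where injectivity is guaranteed by hypothesis (\textbf{A}). First I would note that the operators $S_{k,y_0,\epsilon}$ act boundedly on each of the spaces $V_{k,y_0,\epsilon}$: indeed, writing $S_{k,y_0,\epsilon} f = -T_{k,y_0,\epsilon}(b'' f)$ and observing that multiplication by $b'' \in C^2$ preserves the relevant norms up to constants (since $b'' \in C^2$ is smooth and does not interact with the singular factor $\log(b(y) - b(y_0) + i\epsilon)$), the mapping properties of $T_{k,y_0,\epsilon}$ from Lemmas \ref{bX1}, \ref{X11}, \ref{bX1m}, and \ref{bX17} give bounds of the form $\|S_{k,y_0,\epsilon} f\|_{V} \lesssim |k|^{-1}[\log\langle k\rangle]^{C} \|f\|_{V}$ for a suitable fixed power $C$. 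In particular, for $|k|$ large (say $|k| \geq k_*$ for some $k_*$ depending only on $\vartheta$ and $b$), the operator norm of $S_{k,y_0,\epsilon}$ on $V_{k,y_0,\epsilon}$ is at most $1/2$, and then $\|(I - S_{k,y_0,\epsilon}) f\|_V \geq \frac{1}{2}\|f\|_V$, so \eqref{X23} holds with $\delta = 1/2$ uniformly in $\epsilon, y_0$ for all such $k$.

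It remains to treat the finitely many values $k$ with $1 \leq |k| < k_*$. Here I would argue by contradiction: if \eqref{X23} fails for one such $k$, there are sequences $\epsilon_n \to 0$, $y_{0,n} \in [0,1]$, and $f_n$ with $\|f_n\|_{V_{k,y_{0,n},\epsilon_n}} = 1$ but $\|(I - S_{k,y_{0,n},\epsilon_n}) f_n\|_{V_{k,y_{0,n},\epsilon_n}} \to 0$. Passing to a subsequence, $y_{0,n} \to y_0^* \in [0,1]$. The key structural input is that $S_{k,y_0,\epsilon}$ maps $V_{k,y_0,\epsilon}$ into a \emph{strictly smoother} space: from \eqref{X12.1}, \eqref{bX1.1m}, and \eqref{X17.0}, $\partial_y S_{k,y_0,\epsilon} f$ lies in $X^{1,m+1}$ (or the analogous gain) with norm controlled by $\|f\|_{V}$, so $S_{k,y_{0,n},\epsilon_n} f_n$ enjoys uniform $C^{0,\alpha}$-type bounds (after unwinding the definitions, one extra degree of logarithmic-weighted derivative). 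Since $f_n = S_{k,y_{0,n},\epsilon_n} f_n + o(1)$, the $f_n$ themselves are then equibounded and equicontinuous on $[0,1]$ (away from, and uniformly up to, the point $y_0^*$ — the $y_0,\epsilon$-dependent weights in the $Y^{1,m}$-norms are precisely designed so that the $\log$ singularity at $y = y_0^*$ does not destroy compactness), so by Arzel\`a--Ascoli a further subsequence converges uniformly to some $f^* \in L^\infty(0,1)$ with $\|f^*\|_{L^\infty} $ bounded below by a positive constant — this last point requires showing the $L^\infty$ part of the norm does not evaporate in the limit, which follows because the derivative part of the norm is controlled by $S f_n$ which is itself bounded by the full norm.

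Next I would pass to the limit in the equation. Using $b(z) - b(y_{0,n}) + i\epsilon_n \to b(z) - b(y_0^*)$ and dominated convergence (the integrable singularity of $G_k(y,z)/(b(z) - b(y_0^*))$ near $z = y_0^*$ is handled exactly as in \eqref{bX2}, integrating by parts to trade the $1/(b(z)-b(y_0))$ for a $\log$, which is locally integrable), one gets $S_{k,y_{0,n},\epsilon_n} f_n \to S_{k,y_0^*,0} f^*$, where $S_{k,y_0^*,0}$ is the natural boundary-value operator with kernel involving $1/(b(z) - b(y_0^*))$ interpreted via the integration-by-parts formula. Hence $(I - S_{k,y_0^*,0}) f^* = 0$, i.e. $f^*$ is a nontrivial solution of the generalized eigenfunction equation \eqref{F7} at spectral value $b(y_0^*)$ with $\epsilon = 0$; unwinding the definitions, this means $b(y_0^*)$ is an eigenvalue of the operator $f \mapsto b(y)f + b''(y)\int_0^1 G_k(y,z) f(z)\,dz$, contradicting assumption (\textbf{A}). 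The main obstacle I anticipate is the compactness step: one must verify carefully that the $\epsilon,y_0$-dependent norms genuinely yield a uniformly equicontinuous family — the whole point of these adapted spaces is that the singularity location $y_0$ moves with $n$, so the compactness is not for free and requires checking that the weighted derivative bound on $S f_n$ translates into an honest modulus of continuity for $f_n$ that is uniform as $y_{0,n} \to y_0^*$ and $\epsilon_n \to 0$; a secondary subtlety is ensuring the limiting operator $S_{k,y_0^*,0}$ is exactly the one whose kernel appears in \eqref{F7}, so that the contradiction with (\textbf{A}) is clean rather than modulo reinterpretation.
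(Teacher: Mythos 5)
Your overall architecture — contradiction argument, compactness extracted from the smoothing gain of $S_{k,y_0,\epsilon}$ (which the paper implements by iterating $S$ twice and using the $Z^{1,5}$ bound to get $H^1$ precompactness rather than Arzel\`a--Ascoli in $L^\infty$), nontrivial limit $f^*$, limiting equation, appeal to assumption (\textbf{A}) — is the same as the paper's, and your large-$|k|$ reduction via operator-norm smallness is an equivalent packaging of the paper's deduction that $|k_j|\lesssim 1$ from \eqref{X29}--\eqref{X31}.

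However, there is a genuine gap at the final step, precisely at the point you set aside as a ``secondary subtlety.'' The assertion that $(I-S_{k,y_0^*,0})f^*=0$ ``means $b(y_0^*)$ is an eigenvalue \ldots contradicting (\textbf{A})'' does not follow as stated. As $\epsilon_j\to 0$ the kernel $1/(b(z)-b(y_0^*)+i\epsilon_j)$ does not converge to $1/(b(z)-b(y_0^*))$ in any sense that lets you define $S_{k,y_0^*,0}f^*$ as an absolutely convergent integral: by the Plemelj formula the limit is a principal value \emph{plus} a multiple of $\delta(y-y_0^*)$, which is exactly the term $-iC_{y_0}b''(y_0)\psi_0(y_0)\delta(y-y_0)$ appearing in the paper's \eqref{X46.1}. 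Assumption (\textbf{A}) excludes $L^2$ eigenfunctions of $f\mapsto bf+b''\int G_kf$, so to reach a contradiction you must produce $f=b''\psi_0/(b-b(y_0^*))\in L^2$, and this requires first proving the vanishing condition $b''(y_0^*)\psi_0(y_0^*)=0$; without it the would-be eigenfunction has a nonintegrable singularity at $y_0^*$ and (\textbf{A}) says nothing. The paper supplies the missing idea in its Step 3: pair the limiting distributional equation \eqref{X46.1} with $\overline{\psi_0}$ and take the imaginary part — the Dirichlet form and the principal-value term are real, so the only imaginary contribution is $-C_{y_0}b''(y_0)|\psi_0(y_0)|^2$, forcing \eqref{X47}. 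Only then does $f_j=b''\psi_0/(b-b(y_0)+i\epsilon_j)$ converge in $L^2$ and yield the clean contradiction \eqref{X49}. Your proposal needs this argument (or an equivalent one) to close; as written the contradiction with (\textbf{A}) is not established.
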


\begin{proof}
We consider only the case when $V_{k,y_0,\epsilon}=X^{1,5}_{k,y_0,\epsilon}$, the other two cases are similar and the proof follows the same line of argument.

We argue by contradiction. Suppose the lemma does not hold, then there exist a sequence $\epsilon_j\neq 0, \epsilon_j\to 0$, $y_j\in [0,1], y_j\to y_0$, $k_j\in \mathbb{Z}\backslash\{0\}$, $k_j\to k_0\in(\mathbb{Z}\backslash\{0\})\cup \{\pm\infty\}$, and functions $\psi_j$ such that
\begin{equation}\label{X24}
\|\psi_j\|_{X^{1,5}_{k_j,y_j,\epsilon_j}}=1,
\end{equation} 
and
\begin{equation}\label{X25}
\left\|\psi_j-S_{k_j,y_j,\epsilon_j}\psi_j\right\|_{X^{1,5}_{k_j,y_j,\epsilon_j}}\to 0.
\end{equation}
To simply notations, we set
\begin{equation}\label{X26}
X^{1,5}_j:=X^{1,5}_{k_j,y_j,\epsilon_j}, \qquad Y^{1,5}_j:=Y^{1,5}_{k_j,y_j,\epsilon_j},\qquad Z^{1,5}_j:=Z^{1,5}_{k_j,y_j,\epsilon_j}\qquad S_j:=S_{k_j,y_j,\epsilon_j}.
\end{equation}
Write
\begin{equation}\label{X27}
\psi_j=S_j\psi_j+r_j,
\end{equation}
where, by \eqref{X25},
\begin{equation}\label{X28}
\lim_{j\to\infty}\|r_j\|_{X^{1,5}_j}=0.
\end{equation}

{\bf Step 1} In this step we obtain improved regularity property for $\psi_j$. From \eqref{X27} we get
\begin{equation}\label{X29}
\psi_j=S_j(S_j\psi_j)+S_jr_j+r_j.
\end{equation}
By Lemma \ref{X11}-\ref{bX17}, and \eqref{X24} and \eqref{X28}, we obtain that
\begin{equation}\label{X30}
\|S_jr_j\|_{X^{1,5}_j}+\|r_j\|_{X_j^{1,5}}=o_j(1)
\end{equation}
and
\begin{equation}\label{X31}
\|S_j(S_j\psi_j)\|_{Z^{1,5}_j}\lesssim |k_j|^{-1}.
\end{equation}
\eqref{X29}-\eqref{X31} and \eqref{X24} imply that $|k_j|\lesssim1$.
Hence we can assume (by passing to a subsequence) that
\begin{equation}\label{X34}
k_j=k_0\in\mathbb{Z}\backslash\{0\} \qquad{\rm for\,\,all}\,\,j.
\end{equation}
Set
\begin{equation}\label{X35}
\psi_{2j}:=S_j^2\psi_j\qquad{\rm and}\qquad r_{1j}=S_jr_j,
\end{equation}
one has
\begin{equation}\label{X36}
\left\|\psi_{2j}\right\|_{Z^{1,5}_j}\lesssim1, \qquad \left\|r_{1j}\right\|_{X^{1,5}_j}=o_j(1),
\end{equation}
and
\begin{equation}\label{X37}
\psi_j=S_j\psi_j+r_j=\psi_{2j}+r_{1j}+r_j.
\end{equation}

{\bf Step 2} In this step we pass $j\to\infty$. We first show that
\begin{equation}\label{X39}
\limsup_{j\to\infty}\left\|\psi_{2j}\right\|_{H^1}\gtrsim1.
\end{equation}
Suppose on the contrary one has
\begin{equation}\label{X40}
\lim_{j\to\infty}\left\|\psi_{2j}\right\|_{H^1}=0.
\end{equation}
Using formula similar to \eqref{bX4}, we have
\begin{equation}\label{X41}
\begin{split}
\partial_y(S_j\psi_{2j})(y)=&\frac{b''(y)\psi_{2j}(y)}{b'(y)}\log{(b(y)-b(y_j)+i\epsilon_j)}\\
                    &+\int_0^1G_{k_0}'(y,z)\frac{b''(z)\psi_{2j}(z)}{b'(z)}\log{(b(z)-b(y_j)+i\epsilon_j)}\,dz\\
                   &+\int_0^1\partial_yG_{k_0}(y,z)\partial_z\left[\frac{b''(z)\psi_{2j}(z)}{b'(z)}\right]\log{(b(z)-b(y_j)+i\epsilon_j)}\,dz.
\end{split}
\end{equation}
By H\"older inequality, we obtain from \eqref{X40} that
\begin{equation}\label{X42}
\lim_{j\to\infty}\left\|\frac{\partial_y(S_j\psi_{2j})(y)}{\log{(b(y)-b(y_j)+i\epsilon_j)}-\vartheta^{-1}}\right\|_{L^{\infty}}=0,
\end{equation}
and thus 
\begin{equation}\label{X42.1}
\|S_j\psi_{2j}\|_{Y_j^{1,5}}=o_j(1).
\end{equation}
Using the identity
$$\psi_j=S_j\psi_{2j}+S_jr_{1j}+S_jr_j+r_j$$
and 
$$\|S_jr_{1j}+S_jr_j+r_j\|_{X^{1,5}_j}=o_j(1),$$
we obtain a contradiction with \eqref{X24} from \eqref{X42.1}, which finishes the proof of \eqref{X39}.

Using the bound \eqref{X36} and \eqref{X39}, by passing to a subsequence, there exists a $\psi_0\in H_0^1$ such that
\begin{equation}\label{X43}
\lim_{j\to\infty}\psi_{2j}=\psi_0\not\equiv 0, \qquad {\rm in}\,\,H^1.
\end{equation}
Denote
\begin{equation}\label{X43.1}
\phi_j:=\psi_{2j}.
\end{equation}
Then
\begin{equation}\label{X43.2}
\|\phi_j\|_{Z^{1,5}_j}\lesssim 1\qquad{\rm and}\qquad \lim_{j\to\infty}\|\phi_j-\psi_0\|_{H^1}=0.
\end{equation}
From \eqref{X35}-\eqref{X37} and recalling the definitions \eqref{Sep}, we conclude that the nontrivial function $\psi_0\in H_0^1$ satisfies the following equation
\begin{equation}\label{X44}
\psi_0+\lim_{j\to\infty}\int_0^1G_{k_0}(y,z)\frac{b''(z)\phi_j(z)}{b(z)-b(y_j)+i\epsilon_j}\,dz=0.
\end{equation}

{\bf Step 3} Finally we obtain a contradiction using \eqref{X43.2}-\eqref{X44} and the absence of embedded eigenvalues assumption (A). In view of \eqref{X43.2} and the formula
\begin{equation}\label{X45}
\int_0^1G_{k_0}(y,z)\frac{b''(z)\phi_j(z)}{b(z)-b(y_j)+i\epsilon_j}\,dz=-\int_0^1\partial_z\left[G_{k_0}(y,z)\frac{b''(z)\phi_j(z)}{b'(z)}\right]\log{(b(z)-b(y_j)+i\epsilon_j)}\,dz,
\end{equation}
it follows from \eqref{X43.2} and \eqref{X44} that
\begin{equation}\label{X46}
\psi_0+\lim_{j\to\infty}\int_0^1G_{k_0}(y,z)\frac{b''(z)\psi_0(z)}{b(z)-b(y_0)+i\epsilon_j}\,dz=0.
\end{equation}
We see from \eqref{X46} that, in the sense of distributions,
\begin{equation}\label{X46.1}
-\frac{d^2}{dy^2}\psi_0+k^2\psi_0+\lim_{j\to\infty}\frac{b(y)-b(y_0)}{(b(y)-b(y_0))^2+\epsilon_j^2}b''(y)\psi_0-iC_{y_0}b''(y_0)\psi_0(y_0)\delta(y-y_0)=0,
\end{equation}
with a constant $C_{y_0}>0$.
Multiplying \eqref{X46.1} with $\overline{\psi_0(y)}$, integrating over $[0,1]$ and taking the imaginary part, we see that
\begin{equation}\label{X47}
b''(y_0)\psi_0(y_0)=0.
\end{equation}
Setting 
\begin{equation}\label{X48}
f_{j}:=\frac{b''(y)\psi_0(y)}{b(y)-b(y_0)+i\epsilon_j},
\end{equation}
we obtain from \eqref{X47} and \eqref{X46} that
\begin{equation}\label{X49}
\lim_{j\to\infty}f_j=f\in L^2\qquad {\rm and}\qquad (b(y)-b(y_0))f(y)+b''(y)\int_0^1G_{k_0}(y,z)f(z)dz=0.
\end{equation}
\eqref{X49} is a contradiction to the assumption (A) in subsection \ref{idea} on the absence of embedded eigenvalues. The lemma is now proved.
\end{proof}

\section{Regularity of the spectral measure}
We now study the regularity of $\psi^{\iota}_{k,\epsilon}(y,y_0)$ with $\iota\in\{\pm \}$ and $y, y_0\in[0,1]$, in the limit $\epsilon\to0$. We begin with the equation \eqref{F7}, which can be reformulated as
\begin{equation}\label{F8}
\psi^{\iota}_{k,\epsilon}(y,y_0)+\int_0^1G_k(y,z)\frac{b''(z)\psi^{\iota}_{k,\epsilon}(z,y_0)}{b(z)-b(y_0)+i\iota\epsilon}\,dz=\int_0^1G_k(y,z)\frac{\omega^k_0(z)}{b(z)-b(y_0)+i\iota\epsilon}\,dz.
\end{equation}
Our goal is to obtain estimates on $\psi^{\iota}_{k,\epsilon}(y,y_0)$, $\partial_{y_0}\psi^{\iota}_{k,\epsilon}(y,y_0)$, and $\partial^2_{y_0}\psi^{\iota}_{k,\epsilon}(y,y_0)$.
Choose $\epsilon_0>0$ sufficiently small so that \eqref{X23} holds. Denote
\begin{equation}\label{L01.1}
\Sigma:=\{(k,y_0,\epsilon): k\in\mathbb{Z}\backslash\{0\}, y_0\in[0,1]\,\, {\rm and} \,\,\epsilon\in(-\epsilon_0,\epsilon_0)\backslash\{0\}\}.
\end{equation}

To obtain good dependence of various constants on the parameter $k\in\mathbb{Z}\backslash\{0\}$, we define
\begin{equation}\label{Hk}
\big\|\omega_0^k\big\|_{H^3_k(0,1)}:=\sum_{0\leq\alpha\leq3}|k|^{3-\alpha}\big\|\partial_{y}^{\alpha}\omega_0^k\big\|_{L^2(0,1)}.
\end{equation}
We have the elementary inequality
\begin{equation}\label{Hk1}
\sum_{0\leq\alpha\leq2}|k|^{2-\alpha}\big\|\partial_y^{\alpha}\omega_0^k\big\|_{L^{\infty}(0,1)}\lesssim\big\|\omega_0^k\big\|_{H^3_k(0,1)}.
\end{equation}
Throughout this section, we normalize $\big\|\omega_0^k\big\|_{H^3_k(0,1)}=1$.
%To track the dependence on the regularity of $\omega_0^k(y), y\in[0,1]$, we define for $k\in\mathbb{Z}\backslash\{0\}$,
%\begin{equation}\label{Wk}
%\big\|\omega_0^k\big\|_{W_k}:=\big\|\omega_0^k\big\|_{L^{\infty}}+|k|^{-1}\big\|\partial_y\omega_0^k\big\|_{L^{\infty}}+|k|^{-2}\big\|\partial_y^2\omega_0^k\big\|_{L^{\infty}}
%\end{equation}

We first prove a technical lemma needed below.
\begin{lemma}\label{L0.3}
For $\sigma\in\{1,2,3\}$, $\iota\in\{\pm\}$, $(k,y_0,\epsilon)\in\Sigma$, set
\begin{equation}\label{Fs}
\begin{split}
F^{\sigma\iota}_{k,y_0,\epsilon}:&=\int_0^1G_k(y,z)\frac{\omega_0^k(z)}{(b(z)-b(y_0)+i\iota\epsilon)^\sigma}\,dz.
\end{split}
\end{equation}
Then
\begin{equation}\label{bF1}
\sup_{\iota=\pm}\left\|F^{1\iota}_{k,y_0,\epsilon}\right\|_{Z^1_{k,y_0,\iota\epsilon}}\lesssim|k|^{-3}\big[\log{\langle k\rangle}\big]^3;
\end{equation}
\begin{equation}\label{bF2}
\sup_{\iota=\pm}\left\|F^{2\iota}_{k,y_0,\epsilon}-\frac{\omega_0^k(y)}{|b'(y)|^2}\log{(b(y)-b(y_0)+i\iota\epsilon)}+\mathcal{B}^{\iota}_{k1}\right\|_{Y^1_{k,y_0,\iota\epsilon}}\lesssim|k|^{-2}\big[\log{\langle k\rangle}\big]^4
\end{equation}
with
\begin{equation}\label{bF2.0}
\sup_{\iota=\pm1}\left\|\partial_yF^{2\iota}_{k,y_0,\epsilon}-\frac{\omega_0^k(y)}{b'(y)(b(y)-b(y_0)+i\iota\epsilon)}+\partial_y\mathcal{B}^{\iota}_{k1}\right\|_{X^{1,2}_{k,y_0,\iota\epsilon}}\lesssim|k|^{-1}\big[\log{\langle k\rangle}\big]^4;
\end{equation}
and
\begin{equation}\label{bF3}
\sup_{\iota=\pm1}\left\|F^{3\iota}_{k,y_0,\iota\epsilon}+\frac{\omega_0^k(y)}{2|b'(y)|^2(b(y)-b(y_0)+i\iota\epsilon)}+\mathcal{B}^{\iota}_{k2}\right\|_{X^{1,2}_{k,y_0,\iota\epsilon}}\lesssim|k|^{-1}\big[\log{\langle k\rangle}\big]^4.
\end{equation}
In the above the boundary terms $\mathcal{B}^{\iota}_{k1}, \mathcal{B}^{\iota}_{k2}$ are defined as
\begin{equation}\label{bF2.1}
\mathcal{B}^{\iota}_{k1}:=\omega^k_0(1)\frac{\sinh{(ky)}}{|b'(1)|^2\sinh{k}}\log{(b(1)-b(y_0)+i\iota\epsilon)}+\omega^k_0(0)\frac{\sinh{(k(1-y))}}{|b'(0)|^2\sinh{k}}\log{(b(0)-b(y_0)+i\iota\epsilon)},
\end{equation}
\begin{equation}\label{bF3.1}
\begin{split}
\mathcal{B}^{\iota}_{k2}:=&-\frac{1}{2}\omega^k_0(1)\frac{\sinh{(ky)}}{|b'(1)|^2\sinh{k}}\frac{1}{[b(1)-b(y_0)+i\iota\epsilon]}-\frac{1}{2}\omega^k_0(0)\frac{\sinh{(k(1-y))}}{|b'(0)|^2\sinh{k}}\frac{1}{[b(0)-b(y_0)+i\iota\epsilon]}\\
                                        &+\left[(-3/2)\omega_0^k(1)\frac{b''(1)}{(b'(1))^4}+\frac{d}{dy}\omega_0^k(1)\frac{1}{(b'(1))^3}\right]\frac{\sinh{(ky)}}{\sinh{k}}\log{(b(1)-b(y_0)+i\iota\epsilon)}\\
                                        &+\left[(-3/2)\omega_0^k(1)\frac{b''(0)}{(b'(0))^4}+\frac{d}{dy}\omega_0^k(0)\frac{1}{(b'(0))^3}\right]\frac{\sinh{(k(1-y))}}{\sinh{k}}\log{(b(0)-b(y_0)+i\iota\epsilon)}.
\end{split}
\end{equation}
\end{lemma}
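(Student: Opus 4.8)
The plan is to reduce each of the three estimates to repeated integration by parts in the variable $z$, exactly in the spirit of the identity \eqref{bX2} used in Lemma \ref{bX1}, but now keeping careful track of the boundary contributions at $z=0,1$, which do not vanish because $\omega_0^k$ need not vanish on the boundary. The main mechanism is always the same: write $\frac{1}{(b(z)-b(y_0)+i\iota\epsilon)^\sigma}$ as a $z$-derivative (of a logarithm when $\sigma=1$, of a negative power when $\sigma\ge 2$), transfer the derivative onto $G_k(y,z)\omega_0^k(z)/b'(z)$-type factors, and then measure the resulting logarithmic or mildly singular integrals in the norms $Z^1, Y^1, X^{1,m}$ using the weighted $L^1$ bounds \eqref{Gk1.1}, \eqref{Gk3.1} on $G_k$ and $G_k'$, together with the normalization $\|\omega_0^k\|_{H^3_k}=1$ and its consequence \eqref{Hk1}.

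First, for \eqref{bF1} I would apply the identity \eqref{bX2} (with $f=\omega_0^k$), giving $F^{1\iota}_{k,y_0,\epsilon}=-\int_0^1\partial_z[G_k(y,z)\omega_0^k(z)/b'(z)]\log(b(z)-b(y_0)+i\iota\epsilon)\,dz$, with no boundary term since $G_k(y,0)=G_k(y,1)=0$; the $Z^1$ bound then follows as in Lemma \ref{bX1} after differentiating in $y$ (picking up the pointwise log term governing the $Z^1$-seminorm) and using \eqref{Gk1.1}, \eqref{Gk3.1} and \eqref{Hk1} to handle the remaining integrals, with the extra factor $|k|^{-3}$ coming from $\|\omega_0^k\|_{H^3_k}=1$ rather than $\|\omega_0^k\|_{L^\infty}=1$. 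Second, \eqref{bF2}--\eqref{bF2.0} should be a direct application of Lemma \ref{bX17}: one has $F^{2\iota}_{k,y_0,\epsilon}=T_{k,y_0,\iota\epsilon}(\omega_0^k/(b(z)-b(y_0)+i\iota\epsilon))$, so with $g=\omega_0^k$ in the notation of \eqref{X17}--\eqref{Bt} the leading term is $\frac{\omega_0^k(y)}{|b'(y)|^2}\log(b(y)-b(y_0)+i\iota\epsilon)$, the boundary term is exactly $\mathcal{B}^{\iota}_{k1}$ from \eqref{Bt}, and the remainder is controlled in $Y^1$ (resp. $\partial_y$ in $X^{1,2}$) by $\|f\|_{X^3}$, which by \eqref{X10} equals $|k|\,\|\omega_0^k\|_{Z^1}\lesssim|k|^{-2}$ again by \eqref{bF1}-type reasoning; tracking the powers of $|k|$ gives the stated right-hand sides.

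Third, the genuinely new computation is \eqref{bF3}, the bound on $F^{3\iota}$, where $\sigma=3$. Here I would write $\frac{1}{(b(z)-b(y_0)+i\iota\epsilon)^3}=-\frac{1}{2}\partial_z\big[\frac{1}{b'(z)(b(z)-b(y_0)+i\iota\epsilon)^2}\big]-\frac{1}{2}\frac{b''(z)}{(b'(z))^2(b(z)-b(y_0)+i\iota\epsilon)^2}$ — more precisely, integrate by parts once to lower the power from $3$ to $2$, producing a boundary term (the first two lines of \eqref{bF3.1}) and a $(\,\cdot\,)^{-2}$ integrand to which one applies the already-proved \eqref{bF2}--\eqref{bF2.0}; the Taylor-type coefficients $(-3/2)\omega_0^k b''/(b')^4+\frac{d}{dy}\omega_0^k/(b')^3$ appearing in the last two lines of \eqref{bF3.1} arise precisely from differentiating $\omega_0^k(z)/(b'(z))^2$ and evaluating at the endpoints. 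After this reduction, the leading singular term $-\frac{\omega_0^k(y)}{2|b'(y)|^2(b(y)-b(y_0)+i\iota\epsilon)}$ is read off from the $\frac{1}{|b'(y)|^2}\log$ leading term of \eqref{bF2} differentiated once more, and the residual is estimated in $X^{1,2}$ by \eqref{Gk1.1}, \eqref{Gk3.1}, \eqref{Hk1}.

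The main obstacle is bookkeeping rather than conceptual: one must verify that after the single integration by parts reducing $\sigma=3$ to $\sigma=2$, the boundary terms generated match \eqref{bF3.1} exactly — in particular the logarithmic boundary contributions with the Taylor coefficients — and that the leftover bulk integral is precisely of the form to which \eqref{bF2}--\eqref{bF2.0} applies, with the $|k|$-weights lining up to give $|k|^{-1}[\log\langle k\rangle]^4$. I expect that the cleanest route is to first record an abstract "integration by parts once" identity relating $F^{\sigma\iota}$ to $F^{(\sigma-1)\iota}$ plus explicit boundary data, apply it with $\sigma=3$, and then invoke the previous parts as black boxes; the subtle point to check is that no term of size worse than $|k|^{-1}[\log\langle k\rangle]^4$ in $X^{1,2}$ survives, which relies on using the full strength of \eqref{Gk3.1} (the $|k|^{-1}$ gain on $\partial_{y,z}G_k'$) and on the normalization \eqref{Hk} controlling $\frac{d}{dy}\omega_0^k$ at the endpoints via \eqref{Hk1}.
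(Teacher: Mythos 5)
Your treatment of \eqref{bF1} and \eqref{bF2}--\eqref{bF2.0} is exactly the paper's: $F^{1\iota}=T_{k,y_0,\iota\epsilon}\omega_0^k$ is handled by Lemma \ref{bX1}, and $F^{2\iota}=T_{k,y_0,\iota\epsilon}\big(\omega_0^k/(b(\cdot)-b(y_0)+i\iota\epsilon)\big)$ is handled by Lemma \ref{bX17}, whose boundary term \eqref{Bt} with $g=\omega_0^k$ is precisely $\mathcal{B}^{\iota}_{k1}$. For \eqref{bF3}, however, your proposed reduction has a concrete gap. After a single integration by parts (writing $(b(z)-b(y_0)+i\iota\epsilon)^{-3}=-\tfrac12\,(b'(z))^{-1}\partial_z (b(z)-b(y_0)+i\iota\epsilon)^{-2}$), the boundary contribution \emph{vanishes}, because $G_k(y,0)=G_k(y,1)=0$; so the first two lines of \eqref{bF3.1} are not produced at that stage, contrary to what you assert. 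More importantly, the resulting $(\cdot)^{-2}$ integrand is $\tfrac12\int_0^1\partial_z\big[G_k(y,z)\omega_0^k(z)/b'(z)\big](b(z)-b(y_0)+i\iota\epsilon)^{-2}dz$ (plus a harmless commutator), and the piece carrying $\partial_zG_k(y,z)$ in place of $G_k(y,z)$ is \emph{not} of the form $F^{2\iota}$ nor of the form $T_{k,y_0,\epsilon}(g/(b-b(y_0)+i\iota\epsilon))$, so neither \eqref{bF2}--\eqref{bF2.0} nor Lemma \ref{bX17} applies to it as a black box. That piece is exactly where all the missing structure lives: a second integration by parts on it produces (i) the $(b(\sigma)-b(y_0)+i\iota\epsilon)^{-1}$ boundary terms in the first two lines of \eqref{bF3.1}, from the nonvanishing endpoint values of $\partial_zG_k(y,z)/(b'(z))^2$, and (ii) the main singular term $-\tfrac12\,\omega_0^k(y)|b'(y)|^{-2}(b(y)-b(y_0)+i\iota\epsilon)^{-1}$, which comes from the delta function in $\partial_z^2G_k=k^2G_k-\delta(y-z)$ (cf. \eqref{Gk1.2}), not from ``differentiating the leading term of \eqref{bF2} once more.''

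The paper avoids this by performing both integrations by parts up front, arriving at the identity \eqref{fF4}, whose bulk term involves $\partial_z\{(b'(z))^{-1}\partial_z[G_k(y,z)\omega_0^k(z)/b'(z)]\}$ against the first power $(b(z)-b(y_0)+i\iota\epsilon)^{-1}$ and whose boundary term is exactly $\mathcal{B}^{\iota\ast}_{k2}$ (the first two lines of \eqref{bF3.1}); expanding the bulk term yields \eqref{fF5}, with the main singular term extracted from the delta function and the remaining integrals handled by one further integration by parts via $(b(z)-b(y_0)+i\iota\epsilon)^{-1}=(b'(z))^{-1}\partial_z\log(b(z)-b(y_0)+i\iota\epsilon)$, which is where the logarithmic boundary terms with the coefficients $(-3/2)\omega_0^k b''/(b')^4+\omega_0^{k\prime}/(b')^3$ in \eqref{bF3.1} are generated. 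So your overall philosophy is right, but to close the argument you must carry out the second integration by parts on the $\partial_zG_k$ term explicitly rather than invoking \eqref{bF2}; as written, the step ``apply the already-proved \eqref{bF2}--\eqref{bF2.0}'' does not reach the term that produces the main singularity and the boundary data.
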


\begin{proof}
\eqref{bF1} follows from Lemma \ref{bX1}. The bounds \eqref{bF2}-\eqref{bF2.0} follow from Lemma \ref{bX17}.

To prove \eqref{bF3}, we follow similar integration by parts argument, as in the proof of Lemma \ref{bX17}, and get
\begin{equation}\label{fF4}
\begin{split}
F^{3\iota}_{k,y_0,\epsilon}=&\frac{1}{2}\int_0^1\partial_z\left\{\frac{1}{b'(z)}\partial_z\left[\frac{G_k(y,z)\omega_0^k(z)}{b'(z)}\right]\right\}\frac{1}{b(z)-b(y_0)+i\iota\epsilon}\,dz\\
                                   &-\frac{1}{2}\frac{\partial_zG_k(y,z)\omega_0^k(z)}{|b'(z)|^2}\frac{1}{b(z)-b(y_0)+i\iota\epsilon}\,\bigg|_{z=0}^1.
\end{split}
\end{equation}
Set
\begin{equation}\label{fF4.0}
\mathcal{B}_{k2}^{\iota\ast}:=-\frac{\omega^k_0(1)}{2}\frac{\sinh{(ky)}}{|b'(1)|^2\sinh{k}}\frac{1}{[b(1)-b(y_0)+i\iota\epsilon]}-\frac{\omega^k_0(0)}{2}\frac{\sinh{(k(1-y))}}{|b'(0)|^2\sinh{k}}\frac{1}{[b(0)-b(y_0)+i\iota\epsilon]}.
\end{equation}
From \eqref{fF4}, we obtain that
 \begin{equation}\label{fF5}
 \begin{split}
 F^3_{k,y_0,\epsilon}+\mathcal{B}^{\iota\ast}_{k2}=&-\frac{1}{2}\frac{\omega_0^k(y)}{|b'(y)|^2}\frac{1}{b(y)-b(y_0)+i\iota\epsilon}+\int_0^1\frac{\partial_zG_k(y,z)\partial_z\omega_0^k(z)}{|b'(z)|^2}\frac{1}{b(z)-b(y_0)+i\iota\epsilon}\,dz\\
 &-\frac{3}{2}\int_0^1\frac{b''(z)}{(b'(z))^3}\frac{\omega_0^k(z)\partial_zG_k(y,z)}{b(z)-b(y_0)+i\iota\epsilon}dz-\frac{3}{2}\int_0^1\frac{b''(z)}{(b'(z))^3}\frac{\partial_z\omega_0^k(z)G_k(y,z)}{b(z)-b(y_0)+i\iota\epsilon}dz\\
 &+\frac{1}{2}\int_0^1\frac{G_k(y,z)\partial_z^2\omega_0^k(z)}{|b'(z)|^2(b(z)-b(y_0)+i\iota\epsilon)}\,dz+\frac{k^2}{2}\int_0^1\frac{\omega_0^k(z)}{|b'(z)|^2}\frac{G_k(z,y_0)}{b(z)-b(y_0)+i\iota\epsilon}dz\\
 &+\frac{1}{2}\int_0^1\left[\frac{|b''(z)|^2}{|b'(z)|^4}+\frac{1}{b'(z)}\partial_z^2\left(\frac{1}{b'(z)}\right)\right]\frac{G_k(y,z)\omega_0^k(z)}{b(z)-b(y_0)+i\iota\epsilon}dz.
 \end{split}
 \end{equation}
 The first term on the right hand side is part of \eqref{bF3}. For the other terms,
 upon writing 
 $$\frac{1}{b(z)-b(y_0)+i\iota\epsilon}=\frac{1}{b'(z)}\partial_z\log{(b(z)-b(y_0)+i\iota\epsilon)}$$
we can integrate by parts in $z$ again, and, with the boundary terms collected, the resulting terms can be estimated in a straightforward fashion, using \eqref{Gk1.1} and \eqref{Gk3.1}, which completes the proof of \eqref{bF3}.
\end{proof}

We turn now to the property of the generalized eigenfunctions, and begin with the property of $\psi^{\iota}_{k,\epsilon}(y,y_0)$.
\begin{lemma}\label{L0.1}
Let $\psi^{\iota}_{k,\epsilon}(y,y_0)$ with $\iota\in\{\pm \}$ be defined as above.  Recall the definition \eqref{L01.1}. Then
\begin{equation}\label{L01.2}
\begin{split}
\sup_{(k,y_0,\epsilon)\in\Sigma}\big[\log{\langle k\rangle}\big]^{-4} \bigg[|k|^3\big\|\psi^{\iota}_{k,\epsilon}(\cdot,y_0)\big\|_{Z^1_{k,y_0,\iota\epsilon}}\bigg]\lesssim 1.\end{split}
\end{equation}

\end{lemma}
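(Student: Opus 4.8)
The plan is to bound $\psi^{\iota}_{k,\epsilon}(\cdot,y_0)$ by combining the limiting absorption principle from Lemma \ref{X22} with the source term estimates from Lemma \ref{L0.3}. Recall that $\psi^{\iota}_{k,\epsilon}(y,y_0)$ satisfies the fixed-point equation \eqref{F8}, which in the notation of Section 3 reads $(I - S_{k,y_0,\iota\epsilon})\psi^{\iota}_{k,\epsilon}(\cdot,y_0) = F^{1\iota}_{k,y_0,\epsilon}$, where $S_{k,y_0,\iota\epsilon}$ is the operator from \eqref{Sep} and $F^{1\iota}_{k,y_0,\epsilon}$ is the source term from \eqref{Fs} with $\sigma = 1$. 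The idea is to invert $I - S_{k,y_0,\iota\epsilon}$ in a suitable space, using the coercivity estimate \eqref{X23}, and then feed in the bound on the source.

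First I would work in the space $V_{k,y_0,\iota\epsilon} = Y^1_{k,y_0,\iota\epsilon}$, for which Lemma \ref{X22} provides the lower bound $\|(I - S_{k,y_0,\iota\epsilon})f\|_{Y^1} \ge \delta\|f\|_{Y^1}$ for $\epsilon$ small. To conclude that $I - S_{k,y_0,\iota\epsilon}$ is actually invertible (not merely injective with closed range), I would note that $S_{k,y_0,\iota\epsilon} = -k^{-1}\cdot$ composition involving $T_{k,y_0,\iota\epsilon}$ applied to $b''(z)f(z)$; since multiplication by $b''$ maps $Y^{1,m}$ to $Y^{1,m}$ boundedly (as $b \in C^4$) and $T_{k,y_0,\iota\epsilon}$ maps $Y^1$ into $Z^1 \subset Y^1$ by Lemma \ref{bX1} with a gain of $|k|^{-1}[\log\langle k\rangle]^3$, the operator $S_{k,y_0,\iota\epsilon}$ is bounded on $Y^1$, and the Fredholm alternative (or a Neumann series argument once one observes the relevant compactness, or simply the fact that the lower bound \eqref{X23} together with the analogous lower bound for the adjoint problem, which follows from the same contradiction argument) gives invertibility with $\|(I - S_{k,y_0,\iota\epsilon})^{-1}\|_{Y^1 \to Y^1} \le \delta^{-1}$. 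Then
\begin{equation}\label{L01.3}
\big\|\psi^{\iota}_{k,\epsilon}(\cdot,y_0)\big\|_{Y^1_{k,y_0,\iota\epsilon}} \le \delta^{-1}\big\|F^{1\iota}_{k,y_0,\epsilon}\big\|_{Y^1_{k,y_0,\iota\epsilon}} \lesssim \big\|F^{1\iota}_{k,y_0,\epsilon}\big\|_{Z^1_{k,y_0,\iota\epsilon}} \lesssim |k|^{-3}\big[\log\langle k\rangle\big]^3,
\end{equation}
using the inclusion \eqref{inclu} and the bound \eqref{bF1} from Lemma \ref{L0.3}.

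To upgrade from the $Y^1$ bound to the $Z^1$ bound claimed in \eqref{L01.2}, I would use the equation \eqref{F8} once more as a representation formula: $\psi^{\iota}_{k,\epsilon}(\cdot,y_0) = F^{1\iota}_{k,y_0,\epsilon} - T_{k,y_0,\iota\epsilon}(b'' \psi^{\iota}_{k,\epsilon}(\cdot,y_0))$. The first term is already controlled in $Z^1$ by \eqref{bF1}. For the second term, $b''\psi^{\iota}_{k,\epsilon}(\cdot,y_0)$ is bounded in $Y^1$ by \eqref{L01.3} (with the extra factor $\|\omega_0^k\|_{H^3_k} = 1$ implicit), so Lemma \ref{bX1} with $m=1$ gives $\|T_{k,y_0,\iota\epsilon}(b''\psi^{\iota}_{k,\epsilon}(\cdot,y_0))\|_{Z^1} \lesssim |k|^{-1}[\log\langle k\rangle]^3 \cdot |k|^{-3}[\log\langle k\rangle]^3 = |k|^{-4}[\log\langle k\rangle]^6$, which is even better than needed. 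Collecting, $\|\psi^{\iota}_{k,\epsilon}(\cdot,y_0)\|_{Z^1_{k,y_0,\iota\epsilon}} \lesssim |k|^{-3}[\log\langle k\rangle]^3 + |k|^{-4}[\log\langle k\rangle]^6 \lesssim |k|^{-3}[\log\langle k\rangle]^4$, which is \eqref{L01.2} (the factor $[\log\langle k\rangle]^4$ on the left of \eqref{L01.2} is in fact generous).

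The main obstacle I expect is the justification that the coercivity estimate \eqref{X23} genuinely yields a bounded inverse: Lemma \ref{X22} is stated as a one-sided lower bound, and to invert one needs either that the range is all of $Y^1$ or a parallel estimate for $I$ minus the adjoint-type operator. Since the contradiction argument in the proof of Lemma \ref{X22} applies equally to any sequence extracted from $(I - S_j)f_j \to 0$, including one that would witness non-surjectivity, I would invoke the Fredholm property of $I - S_{k,y_0,\iota\epsilon}$ — which follows because $S_{k,y_0,\iota\epsilon}$, being $T_{k,y_0,\iota\epsilon}$ composed with multiplication by a $C^2$ function and carrying the smoothing gain $|k|^{-1}[\log\langle k\rangle]^3$ into the strictly smaller space $Z^1$, is a compact perturbation of $0$ on the relevant spaces — and then the lower bound \eqref{X23} rules out a kernel, so $I - S_{k,y_0,\iota\epsilon}$ is boundedly invertible. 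A secondary point to be careful about is that all constants and the threshold $\epsilon_0$ are uniform in $(k,y_0,\epsilon) \in \Sigma$; this is exactly the content of the uniform statement of Lemma \ref{X22} together with the $k$-explicit bounds \eqref{Gk1.1}, \eqref{Gk3.1}, so no new work is needed there.
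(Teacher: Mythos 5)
Your argument is correct and follows essentially the same route as the paper: rewrite \eqref{F8} as $(I-S^{\iota}_{k,y_0,\epsilon})\psi^{\iota}_{k,\epsilon}(\cdot,y_0)=F^{1\iota}_{k,y_0,\epsilon}$, apply Lemma \ref{X22} together with \eqref{bF1} to obtain the $Y^1_{k,y_0,\iota\epsilon}$ bound, and then feed the equation once more through Lemma \ref{bX1} to upgrade to $Z^1_{k,y_0,\iota\epsilon}$. The Fredholm/invertibility discussion is superfluous: for $\epsilon\neq 0$ the function $\psi^{\iota}_{k,\epsilon}$ is already defined via the resolvent in \eqref{F6} and lies in $H^2\subset C^1$, so one simply applies the lower bound \eqref{X23} to this known solution and never needs surjectivity of $I-S^{\iota}_{k,y_0,\epsilon}$.
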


\begin{proof}
We normalize so that $\|\omega_0^k\|_{H^3}=1$. Recall the definition \eqref{Sep}.
Denote
\begin{equation}\label{L2.1}
S^{\iota}_{k,y_0,\epsilon}:=S_{k,y_0,\iota\epsilon},\qquad{\rm for}\,\,\iota\in\{\pm\}.
\end{equation}
We first note that \eqref{F8} can be written in the following abstract form
\begin{equation}\label{L3}
(I-S^{\iota}_{k,y_0,\epsilon})\psi^{\iota}_{k,\epsilon}(y,y_0)=\int_0^1G_k(y,z)\frac{\omega^k_0(z)}{b(z)-b(y_0)+i\iota\epsilon}\,dz.
\end{equation}

Using Lemma \ref{bX1} and Lemma \ref{X22}, and the bounds \eqref{bF1}, we can conclude that \eqref{L3} 
\begin{equation}\label{L4}
\big\|\psi^{\iota}_{k,\epsilon}(\cdot,y_0)\big\|_{Y^1_{k,y_0,\iota\epsilon}}\lesssim |k|^{-3}\big[\log{\langle k\rangle}\big]^3. 
\end{equation}
Using \eqref{F8} and Lemma \ref{bX1}, we can upgrade \eqref{L4} and obtain that
\begin{equation}\label{L5}
\big\|\psi^{\iota}_{k,\epsilon}(\cdot,y_0)\big\|_{Z^1_{k,y_0,\iota\epsilon}}\lesssim |k|^{-3}\big[\log{\langle k\rangle}\big]^3,
\end{equation}
which completes the proof of \eqref{L01.2} for $\psi^{\iota}_{k,\epsilon}(\cdot,y_0)$. 

\end{proof}

We next turn to the property of $\partial_{y_0}\psi^{\iota}_{k,\epsilon}$.
\begin{lemma}\label{L0.2}
$\partial_{y_0}\psi^{\iota}_{k,\epsilon}(y,y_0)$ and $\partial_{y_0}\partial_y\psi^{\iota}_{k,\epsilon}(y,y_0)$ satisfy the following decomposition
\begin{equation}\label{L0.21}
\begin{split}
\partial_{y_0}\psi^{\iota}_{k,\epsilon}(y,y_0)=&\bigg[\frac{b'(y_0)\omega^k_0(y)}{|b'(y)|^2}-\frac{b'(y_0)b''(y)\psi^{\iota}_{k,\epsilon}(y,y_0)}{|b'(y)|^2}\bigg]\log{(b(y)-b(y_0)+i\iota\epsilon)}\\  
&\\
         &+b'(y_0)\sum_{\sigma=0,1}\omega_0^k(\sigma)\Psi^{\iota}_{\sigma,k,y_0,\epsilon}(y)\log{(b(\sigma)-b(y_0)+i\iota\epsilon)}+\mathcal{R}^{\iota}_{\sigma,k,y_0,\epsilon}(y),
\end{split}
\end{equation}
and
\begin{equation}\label{L0.22}
\begin{split}
\partial_y\partial_{y_0}\psi^{\iota}_{k,\epsilon}(y,y_0)&=\left[\frac{b'(y_0)\omega^k_0(y)}{b'(y)}-\frac{b'(y_0)b''(y)\psi^{\iota}_{k,\epsilon}(y,y_0)}{b'(y)}\right]\frac{1}{b(y)-b(y_0)+i\iota\epsilon}\\
&\\
                                              &+b'(y_0)\sum_{\sigma=0,1}\omega_0^k(\sigma)\partial_y\Psi^{\iota}_{\sigma,k,y_0,\epsilon}(y)\log{(b(\sigma)-b(y_0)+i\iota\epsilon)}+\widetilde{\mathcal{R}}^{\iota}_{k,y_0,\epsilon}(y).
\end{split}
\end{equation}
In the above
\begin{equation}\label{L0.211}
\left\|\mathcal{R}^{\iota}_{k,y_0,\epsilon}\right\|_{Y^1_{k,y_0,\epsilon}}\lesssim |k|^{-2}\big[\log{\langle k\rangle}\big]^4,\qquad
\left\|\widetilde{\mathcal{R}}^{\iota}_{k,y_0,\epsilon}\right\|_{X^{1,2}_{k,y_0,\iota\epsilon}}\lesssim|k|^{-1} \big[\log{\langle k\rangle}\big]^4;
\end{equation}
the functions $\Psi^{\iota}_{\sigma,k,y_0,\epsilon}$ for $\sigma\in\{0,1\}$, $\iota\in\{\pm\}$ and $(k,y_0,\epsilon)\in\Sigma$ satisfy
\begin{equation}\label{L0.200}
\left\|\Psi^{\iota}_{\sigma,k,y_0,\epsilon}\right\|_{Z^{1}_{k,y_0,\iota\epsilon}}\lesssim\big[\log{\langle k\rangle}\big]^4,\qquad \qquad\lim_{\epsilon\to0}\sum_{\alpha\in\{0,1\}}\left|\Psi^{+}_{\sigma,k,\alpha,\epsilon}(y)- \Psi^{-}_{\sigma,k,\alpha,\epsilon}(y)\right|=0,
\end{equation}
and
\begin{equation}\label{L0.20001}
\left|\partial_{y_0,\epsilon}\Psi^{\iota}_{\sigma,k,y_0,\epsilon}(y)\right|\lesssim \big[\log{\langle k\rangle}\big]^4\bigg[\sum_{\alpha\in\{0,1,y\}}\big|\log{(b(\alpha)-b(y_0)+i\iota\epsilon)}\big|+1\bigg].
\end{equation}
We also record the following property of $\psi^{\iota}_{k,\epsilon}(y,y_0)$,
\begin{equation}\label{L0.20002}
\lim_{\epsilon\to0}\Big[\psi^{+}_{k,\epsilon}(y,y_0)-\psi^{-}_{k,\epsilon}(y,y_0)\Big]=0, \qquad {\rm for\,\,}y_0\in\{0,1\},
\end{equation}
\begin{equation}\label{L0.20003}
\left|\partial_{\epsilon}\psi^{\iota}_{k,\epsilon}(y,y_0)\right|\lesssim |k|^{-2}\big[\log{\langle k\rangle}\big]^4\bigg[\sum_{\alpha\in\{0,1,y\}}\big|\log{(b(\alpha)-b(y_0)+i\iota\epsilon)}\big|+1\bigg].
\end{equation}
\end{lemma}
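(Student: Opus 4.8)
The plan is to differentiate the defining equation \eqref{F8} in $y_0$ and treat the result as a perturbed fixed-point problem for $\partial_{y_0}\psi^{\iota}_{k,\epsilon}$, exactly as Lemma \ref{L0.1} did for $\psi^{\iota}_{k,\epsilon}$ itself. Differentiating \eqref{F8} in $y_0$ produces, on the left, $(I-S^{\iota}_{k,y_0,\epsilon})\partial_{y_0}\psi^{\iota}_{k,\epsilon}$, and on the right a collection of source terms coming from (i) the $y_0$-derivative hitting the singular kernel $1/(b(z)-b(y_0)+i\iota\epsilon)$ in $S^{\iota}_{k,y_0,\epsilon}$ applied to $\psi^{\iota}_{k,\epsilon}$, which generates a factor $b'(y_0)/(b(z)-b(y_0)+i\iota\epsilon)^2$ and hence a term of type $F^{2\iota}$ with $\omega_0^k$ replaced by $b''\psi^{\iota}_{k,\epsilon}$, and (ii) the same derivative hitting the singular kernel on the right-hand side of \eqref{F8}, generating $b'(y_0)F^{2\iota}_{k,y_0,\epsilon}$. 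The key input is Lemma \ref{L0.3}: the term $F^{2\iota}$ (and the analogous object built from $b''\psi^{\iota}_{k,\epsilon}$, which by Lemma \ref{L0.1} has $Z^1$ norm $\lesssim |k|^{-3}[\log\langle k\rangle]^4$, so its $F^2$-transform is controlled via Lemma \ref{bX17} using the $X^3$-norm) decomposes as an explicit $\log$-leading singularity plus boundary terms $\mathcal{B}^{\iota}_{k1}$ plus a remainder in $Y^1_{k,y_0,\iota\epsilon}$. Reading off the explicit pieces gives precisely the first line of \eqref{L0.21} (the genuine singularity $\log(b(y)-b(y_0)+i\iota\epsilon)$ with coefficient built from $\omega_0^k(y)$ and $b''(y)\psi^{\iota}_{k,\epsilon}(y,y_0)$) and the boundary sum in the second line, with $\Psi^{\iota}_{\sigma,k,y_0,\epsilon}$ essentially the image under $(I-S^{\iota}_{k,y_0,\epsilon})^{-1}$ of the explicit profiles $\sinh(ky)/\sinh k$ and $\sinh(k(1-y))/\sinh k$; the remainder $\mathcal{R}^{\iota}$ is what is left, estimated in $Y^1_{k,y_0,\epsilon}$ by combining Lemma \ref{X22} (invertibility of $I-S^{\iota}_{k,y_0,\epsilon}$ with uniform lower bound $\delta$) with the $Y^1$-bounds in Lemma \ref{L0.3}, yielding the stated $|k|^{-2}[\log\langle k\rangle]^4$.

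For \eqref{L0.22} I would simply apply $\partial_y$ to the whole identity. The $\partial_y$-derivative of the leading $\log$ term produces the $1/(b(y)-b(y_0)+i\iota\epsilon)$ singularity displayed in \eqref{L0.22} (plus lower-order $\log$ terms absorbed into $\widetilde{\mathcal{R}}^{\iota}$), the $\partial_y$ hits $\Psi^{\iota}_{\sigma,k,y_0,\epsilon}$ to give the middle term, and $\partial_y\mathcal{R}^{\iota}$ together with the operator-level contributions is estimated in the coarser space $X^{1,2}_{k,y_0,\iota\epsilon}$; here the relevant operator bound is \eqref{X12.1} and \eqref{X17.0}, which lose a derivative (hence $X^{1,2}$ rather than $Y^1$) but cost only a constant in $k$, matching the $|k|^{-1}[\log\langle k\rangle]^4$ in \eqref{L0.211}. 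The bounds \eqref{L0.200} on $\Psi^{\iota}_{\sigma,k,y_0,\epsilon}$ follow because it solves $(I-S^{\iota}_{k,y_0,\epsilon})\Psi^{\iota}_{\sigma,k,y_0,\epsilon} = (\text{explicit } \sinh \text{ profile})$ up to a $Y^1$-bounded correction from Lemma \ref{bX17}; invertibility and Lemma \ref{bX1} give the $Z^1$ bound.

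The continuity/convergence statements \eqref{L0.200} (the $\epsilon\to0$ limit of $\Psi^+-\Psi^-$ at $y_0\in\{0,1\}$), \eqref{L0.20002}, and the $\partial_\epsilon$, $\partial_{y_0}$ bounds \eqref{L0.20001}, \eqref{L0.20003} I would handle by a second round of the same argument: differentiate \eqref{F8} (or the fixed-point equation for $\psi^{\iota}$ and for $\Psi^{\iota}_{\sigma}$) in $\epsilon$ and in $y_0$, obtaining $(I-S^{\iota}_{k,y_0,\epsilon})$ applied to the derivative equal to a source with one extra factor of $1/(b(z)-b(y_0)+i\iota\epsilon)$, hence an $F^{2\iota}$- or $F^{3\iota}$-type term. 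The crucial point for the difference $\psi^+-\psi^-$ at $y_0\in\{0,1\}$ and for $\Psi^+-\Psi^-$ is that when $y_0\in\{0,1\}$ the singularity $\log(b(y)-b(y_0)+i\iota\epsilon)$ is, as $\epsilon\to0$, bounded uniformly away from the endpoint in the relevant $L^\infty$ sense for the boundary profiles — more precisely the jump across the real axis is localized at $y=y_0$, which lies on the boundary where the $\sinh$ profiles degenerate, so the $\iota$-dependence disappears in the limit; I would make this precise by writing the difference as $(I-S^+)^{-1}$ minus $(I-S^-)^{-1}$ acting on sources whose difference tends to zero by dominated convergence (the kernels differ by $2i\iota\epsilon/((b(z)-b(y_0))^2+\epsilon^2)$ which converges weakly to a delta at $z=y_0$, and the relevant test functions vanish there because $G_k(y,0)=G_k(y,1)=0$). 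The main obstacle I anticipate is bookkeeping: correctly identifying which explicit terms to peel off so that the remainder genuinely lands in $Y^1$ (resp.\ $X^{1,2}$) rather than in a more singular space — in particular making sure the "difficult" self-interaction term $S^{\iota}$ acting on the $\log$-singular part of $\psi^{\iota}$ is handled with the extra integration by parts of Lemma \ref{bX17} (the $\mathcal{T}_2'$ cancellation) rather than naively, and that the boundary terms $\mathcal{B}^{\iota}_{k1}$ are matched exactly so their contribution to $\partial_{y_0}\psi^{\iota}$ appears as the displayed $\Psi^{\iota}_{\sigma}$-sum and not buried in $\mathcal{R}^{\iota}$.
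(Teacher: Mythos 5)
Your proposal is correct and follows essentially the same route as the paper: differentiate \eqref{F8} in $y_0$ to get $(I-S^{\iota}_{k,y_0,\epsilon})\partial_{y_0}\psi^{\iota}_{k,\epsilon}=\mathcal{G}^{\iota}_{k,y_0,\epsilon}$, decompose the source via Lemma \ref{L0.3} and Lemma \ref{bX17} into the explicit $\log$ singularity plus the boundary terms $\mathcal{B}^{\iota}_{k1}$ plus a $Y^1$-controlled remainder, define $\Psi^{\iota}_{\sigma,k,y_0,\epsilon}$ as the resolvent image of the $\sinh$ profiles (the paper's \eqref{L7.1}--\eqref{L7.21}), and invoke Lemma \ref{X22} for the remainder; the $\partial_y$ and $\partial_{y_0,\epsilon}$ statements are handled exactly as you describe. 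Your elaboration of the $\epsilon\to 0$ continuity statements is more detailed than the paper's (which simply asserts they follow by the same argument applied to the differentiated equations), but it is consistent with it.
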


\begin{proof}
Taking one derivative in $y_0$, we obtain from \eqref{F8},
\begin{equation}\label{L6}
\begin{split}
\partial_{y_0}\psi^{\iota}_{k,\epsilon}(y,y_0)+\int_0^1G_k(y,z)\frac{b''(z)\partial_{y_0}\psi^{\iota}_{k,\iota\epsilon}(z,y_0)}{b(z)-b(y_0)+i\iota\epsilon}\,dz=&\int_0^1G_k(y,z)\frac{b'(y_0)\omega_0^k(z)}{(b(z)-b(y_0)+i\iota\epsilon)^2}\,dz\\
&-\int_0^1G_k(y,z)\frac{b'(y_0)b''(z)\psi^{\iota}_{k,\iota\epsilon}(z,y_0)}{(b(z)-b(y_0)+i\iota\epsilon)^2}\,dz\\
:=&\mathcal{G}^{\iota}_{k,y_0,\epsilon}(y).
\end{split}
\end{equation}
For $\iota\in\{\pm\}$, $k\in\mathbb{Z}\backslash\{0\}$, $y_0\in[0,1]$, $\epsilon\in[-1/4,1/4]\backslash\{0\}$,
\begin{equation}\label{L6.1}
\begin{split}
\mathcal{B}^{\iota}_{k,y_0,\epsilon}(y):=&\,\omega_0^k(1)b'(y_0)\frac{\sinh{(ky)}}{|b'(1)|^2\sinh{k}}\log{(b(1)-b(y_0)+i\iota\epsilon)}\\
                                         &+\omega_0^k(0)b'(y_0)\frac{\sinh{k(1-y)}}{|b'(0)|^2\sinh{k}}\log{(b(0)-b(y_0)+i\iota\epsilon)};
\end{split}
\end{equation}
Using \eqref{bF2}, Lemma \ref{bX17} and \eqref{L5}, we conclude that
\begin{equation}\label{L7}
\|\mathcal{G}^{\iota}_{k,y_0,\epsilon}+\mathcal{B}^{\iota}_{k,y_0,\epsilon}\|_{X^2_{k,y_0,\iota\epsilon}+Y^1_{k,y_0,\iota\epsilon}}\lesssim|k|^{-2}\big[\log{\langle k\rangle}\big]^4.
\end{equation}
Define $\Psi^{\iota'}_{\sigma,k,y_0,\epsilon}$ for $\iota\in\{\pm\}$, $\sigma\in\{0,1\}$, $(k,y_0,\epsilon)\in\Sigma$ as the solution to
\begin{equation}\label{L7.1}
(I-S_{k,y_0,\epsilon}^{\iota})\Psi^{\iota'}_{1,k,y_0,\epsilon}=-S_{k,y_0,\epsilon}^{\iota}\left[\frac{\sinh{(ky)}}{|b'(1)|^2\sinh{k}}\right],
\end{equation}
\begin{equation}\label{L7.2}
(I-S_{k,y_0,\epsilon}^{\iota})\Psi^{\iota'}_{0,k,y_0,\epsilon}=-S_{k,y_0,\epsilon}^{\iota}\left[\frac{\sinh{k(1-y)}}{|b'(0)|^2\sinh{k}}\right].
\end{equation}
Now set for $\iota\in\{\pm\}$, $(k,y_0,\epsilon)\in\Sigma$,
\begin{equation}\label{L7.21}
\begin{split}
\Psi^{\iota}_{1,k,y_0,\epsilon}=-\frac{\sinh{(ky)}}{|b'(1)|^2\sinh{k}}+\Psi^{\iota'}_{1,k,y_0,\epsilon},\qquad\Psi^{\iota}_{0,k,y_0,\epsilon}=-\frac{\sinh{k(1-y)}}{|b'(0)|^2\sinh{k}}+\Psi^{\iota'}_{0,k,y_0,\epsilon}.
\end{split}
\end{equation}
By Lemma \ref{bX1} and Lemma \ref{X22}, we have the following bounds for all $\iota\in\{\pm\}$, $k\in\mathbb{Z}\backslash\{0\}$,
\begin{equation}\label{L7.3}
\sum_{\sigma=0,1}\left\|\Psi^{\iota}_{\sigma,k,y_0,\epsilon}\right\|_{Z^1_{k,y_0,\iota\epsilon}}\lesssim\big[\log{\langle k\rangle}\big]^3.
\end{equation}

 It is clear that
$$\sum_{\sigma\in\{0,1\}}b'(y_0)\omega_0^k(\sigma)\Psi^{\iota}_{\sigma,k,y_0,\epsilon}(y)\log{(b(\sigma)-b(y_0)+i\iota)}$$
solves \eqref{L6} with the right hand side $-\mathcal{B}^{\iota}_{\sigma,k,y_0,\epsilon}$.

To prove \eqref{L0.21}, we only need to consider \eqref{L6} with the right hand side $\mathcal{G}^{\iota}_{k,y_0,\epsilon}+\mathcal{B}^{\iota}_{k,y_0,\epsilon}$. We use Lemma \ref{X22} for the norms $X^2_{k,y_0,\iota\epsilon}+Y^1_{k,y_0,\iota\epsilon}$.   In view of Lemma \ref{bX17} and \eqref{L7}, combining with the bounds \eqref{L7.3}, the decomposition \eqref{L0.21} is now established.

The decomposition \eqref{L0.22} follows from Lemma \ref{bX17}, when considering equation \eqref{L7} with right hand side $\mathcal{G}^{\iota}_{k,y_0,\epsilon}+\mathcal{B}^{\iota}_{k,y_0,\epsilon}$, after the boundary terms are taken out.

\eqref{L0.20001} follows from taking derivatives in $y_0,\epsilon$ in the equations \eqref{L7.1}-\eqref{L7.2}, and apply the same argument for the proof of the decompositions \eqref{L0.21}-\eqref{L0.22}. \eqref{L0.20003} follows similarly.
\end{proof}

To obtain better control on the generalized eigenfunctions, we need the following lemma.
\begin{lemma}\label{P0}
For $\iota\in\{\pm\}, \sigma$, $(k,y_0,\epsilon)\in\Sigma$, set
\begin{equation}\label{P1}
F^{4\iota}_{k,y_0,\epsilon}(y):=\int_0^1G_k(y,z)b''(z)\frac{\psi^{\iota}_{k,\epsilon}(z,y_0)}{(b(z)-b(y_0)+i\iota\epsilon)^3}dz,
\end{equation}
Then there exist functions $\Upsilon^{\iota ij}_{\sigma,\tau,k,y_0,\epsilon}(y)$ and $H^{\iota}_{k,y_0,\epsilon}(y)$ for $i,j\in\{0,1,2\},\iota\in\{\pm\}, \sigma,\tau\in\{0,1\}$, $(k,y_0,\epsilon)\in\Sigma$, satisfying
\begin{equation}\label{P1.00}
\left\|\Upsilon^{\iota ij}_{\sigma,k,y_0,\epsilon}\right\|_{X^{1,2}_{k,y_0,\iota\epsilon}}\lesssim|k|^{-2} \big[\log{\langle k\rangle}\big]^7,
\end{equation}
such that
\begin{equation}\label{P1.0}
\begin{split}
F^{4\iota}_{k,y_0,\epsilon}(y)=&-\frac{b''(y)\psi^{\iota}_{k,\epsilon}(y,y_0)}{2|b'(y)|^2(b(y)-b(y_0)+i\iota\epsilon)}\\
&\\
                                          &+\sum_{\sigma,\tau\in\{0,1\}, i,j\in\{0,1,2\}}\Upsilon^{\iota ij}_{\sigma,k,y_0,\epsilon}(y)\Big[\log{(b(\sigma)-b(y_0)+\iota\epsilon)}\Big]^i\Big[\log{(b(\tau)-b(y_0)+\iota\epsilon)}\Big]^j.
\end{split}
\end{equation}
\end{lemma}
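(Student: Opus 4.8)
The strategy is to integrate by parts twice in $z$ in the definition \eqref{P1} of $F^{4\iota}_{k,y_0,\epsilon}$, exactly as in the proof of Lemma \ref{bX17}, in order to trade the $(b(z)-b(y_0)+i\iota\epsilon)^{-3}$ singularity for two factors of $\log(b(z)-b(y_0)+i\iota\epsilon)$, at the cost of boundary terms. Write $[b(z)-b(y_0)+i\iota\epsilon]^{-1}=b'(z)^{-1}\partial_z\log(b(z)-b(y_0)+i\iota\epsilon)$ and note the algebraic identity
\begin{equation}\label{Pplan1}
\frac{1}{[b(z)-b(y_0)+i\iota\epsilon]^{3}}=\frac{1}{2b'(z)}\,\partial_z\!\left(\frac{1}{b'(z)}\,\partial_z\log(b(z)-b(y_0)+i\iota\epsilon)\right)\cdot(\text{lower order}),
\end{equation}
more precisely one uses $\partial_z[b(z)-b(y_0)+i\iota\epsilon]^{-2}=-2b'(z)[b(z)-b(y_0)+i\iota\epsilon]^{-3}$, so a first integration by parts against $G_k(y,z)b''(z)\psi^{\iota}_{k,\epsilon}(z,y_0)$ produces a $[b(z)-b(y_0)+i\iota\epsilon]^{-2}$ integrand together with a boundary term, and a second integration by parts (using $[b(z)-b(y_0)+i\iota\epsilon]^{-1}=b'(z)^{-1}\partial_z\log$) produces a $\log$ integrand plus another boundary term. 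The leading local term comes from the derivative hitting $G_k$ and landing on the delta-like part implicit through the structure, but here more directly the term $-\tfrac{1}{2}b''(y)\psi^{\iota}_{k,\epsilon}(y,y_0)|b'(y)|^{-2}(b(y)-b(y_0)+i\iota\epsilon)^{-1}$ will emerge from integrating $\partial_z[b(z)-b(y_0)+i\iota\epsilon]^{-2}$ against $G_k(y,z)\partial_z[\cdots]$ and recognizing $\int_0^1\partial_zG_k(y,z)(\cdots)b'(z)^{-1}\,dz$ via \eqref{Gk1.2}, exactly as the term $\tfrac{g(y)}{|b'(y)|^2}\log(\cdots)$ appeared in \eqref{X18}.

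After this double integration by parts, every remaining bulk integral has an integrand of the form $G_k(y,z)$ or $\partial_zG_k(y,z)$ times a product of $[\log(b(z)-b(y_0)+i\iota\epsilon)]^{\le 2}$ with coefficients built from $b,b',b'',\psi^{\iota}_{k,\epsilon}(z,y_0)$ and its $z$-derivative. Here is where I invoke Lemma \ref{L0.1}: the bound $\|\psi^{\iota}_{k,\epsilon}(\cdot,y_0)\|_{Z^1_{k,y_0,\iota\epsilon}}\lesssim |k|^{-3}[\log\langle k\rangle]^4$ tells us that $\partial_z\psi^{\iota}_{k,\epsilon}$ itself splits as $g\log(b(z)-b(y_0)+i\iota\epsilon)+h$ with $g,h$ in $Y^{1,m}$-type spaces of size $|k|^{-2}[\log\langle k\rangle]^4$; so differentiating $\psi^{\iota}_{k,\epsilon}(z,y_0)$ during the integration by parts generates at worst an extra $\log$, landing us in $X^{1,2}_{k,y_0,\iota\epsilon}$. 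The bulk integrals are then estimated term by term using the $L^1$-bounds \eqref{Gk1.1} and \eqref{Gk3.1} for $G_k$ and $G_k'$ weighted against powers of $\log|z-A|$ (here $A=y_0$, valid since $|b'|$ is bounded below by \eqref{A} so that $\log(b(z)-b(y_0)+i\iota\epsilon)$ is comparable to $\log|z-y_0|$ plus a bounded term). Each integration by parts costs a power of $[\log\langle k\rangle]$ and each weighted $G_k$-bound costs up to $[\log\langle k\rangle]^m$; tallying these (two integrations by parts, $\log^2$ weights, and the $[\log\langle k\rangle]^4$ from $\psi$) gives the claimed exponent $7$ in \eqref{P1.00}, and the $|k|^{-2}$ comes from $|k|^{-3}$ in Lemma \ref{L0.1} improved by one power through the $|k|^{-1}$ gained from each $G_k$-bound minus the $|k|$ lost in one $\partial_zG_k$ term.

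Finally I collect the boundary terms from the two integrations by parts. Evaluating at $z=0$ and $z=1$, using the explicit formula \eqref{eq:GreenFunction} for $G_k$ — so $G_k(y,1)$ contributes $\sinh(ky)/(k\sinh k)$-type factors and $G_k(y,0)$ contributes $\sinh(k(1-y))/(k\sinh k)$-type factors — and using the smoothness of $\psi^{\iota}_{k,\epsilon}$ near the boundary together with its value there, each boundary term is a function of $y$ (namely a combination of $\sinh(ky)/\sinh k$ and $\sinh(k(1-y))/\sinh k$, which lie in $Z^1$-type spaces) multiplied by a product $[\log(b(\sigma)-b(y_0)+i\iota\epsilon)]^i[\log(b(\tau)-b(y_0)+i\iota\epsilon)]^j$ with $\sigma,\tau\in\{0,1\}$ and $i,j\le 2$; the $[\log(b(\sigma)-b(y_0)+i\iota\epsilon)]^2$ arises because a boundary term from the first integration by parts still contains $[b(\sigma)-b(y_0)+i\iota\epsilon]^{-1}=b'(\sigma)^{-1}\partial_\sigma\log(\cdots)$-type factors that must be reabsorbed, or because the derivative of $\psi^{\iota}_{k,\epsilon}$ at the boundary carries its own $\log$. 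Defining $\Upsilon^{\iota ij}_{\sigma,\tau,k,y_0,\epsilon}(y)$ to be precisely these $y$-dependent coefficients (absorbing the contributions of $H^{\iota}_{k,y_0,\epsilon}$ and the bulk remainder into the $i=j=0$ piece) yields the stated decomposition \eqref{P1.0}, with \eqref{P1.00} following from the $Z^1$-bounds on $\sinh(ky)/\sinh k$, the bounds on $\psi^{\iota}_{k,\epsilon}$ from Lemma \ref{L0.1}, and the $G_k$-estimates.

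\emph{Main obstacle.} The delicate point — as already flagged in the proof of Lemma \ref{bX17} around \eqref{X20}–\eqref{X21} — is the term in which, after integrating by parts, the $z$-derivative falls on $\psi^{\iota}_{k,\epsilon}(z,y_0)$ while a single $[b(z)-b(y_0)+i\iota\epsilon]^{-1}$ factor still survives: naively this is not integrable in the limit $\epsilon\to 0$. The resolution is to use the refined structure of $Z^1_{k,y_0,\iota\epsilon}$ (definition \eqref{Z1'}), which says precisely that $\partial_z\psi^{\iota}_{k,\epsilon}=g\log(b(z)-b(y_0)+i\iota\epsilon)+h$, and then integrate by parts \emph{once more}, writing $[b(z)-b(y_0)+i\iota\epsilon]^{-1}=b'(z)^{-1}\partial_z\log$ and moving the derivative off $\log$ onto the smooth coefficients $g/b'$, $h/b'$ — this converts the dangerous $\log\cdot(b-b(y_0)+i\iota\epsilon)^{-1}$ into $\log^2$ and $\log^1$ integrands that are controlled by \eqref{Gk1.1}. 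Keeping careful track of the resulting cascade of boundary terms (which is the reason $i,j$ range up to $2$ and $\sigma,\tau$ each over $\{0,1\}$, giving the full double sum) and verifying that no uncontrolled $(b(\sigma)-b(y_0)+i\iota\epsilon)^{-1}$ boundary factor is left over — i.e. that every such factor is either cancelled or reabsorbed into a $\log$ via one further boundary-level integration by parts — is the bookkeeping-heavy heart of the argument; the analytic estimates themselves are then routine applications of \eqref{Gk1.1}, \eqref{Gk3.1}, \eqref{A}, and Lemma \ref{L0.1}.
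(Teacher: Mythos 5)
Your overall skeleton matches the paper's: a double integration by parts in $z$ using $(b(z)-b(y_0)+i\iota\epsilon)^{-3}=-\tfrac{1}{2b'(z)}\partial_z(b(z)-b(y_0)+i\iota\epsilon)^{-2}$, exactly as in \eqref{fF4}--\eqref{fF5}, which produces the decomposition \eqref{P3} with the main local term $T_1$ and six remaining integrals. One point you raise as a worry but do not resolve — whether uncontrolled boundary pole factors $(b(\sigma)-b(y_0)+i\iota\epsilon)^{-1}$ survive — has a clean answer you should state: the boundary terms of the first two integrations by parts vanish identically because $G_k(y,\cdot)$ and $\psi^{\iota}_{k,\epsilon}(\cdot,y_0)$ both satisfy Dirichlet conditions at $z=0,1$ (see \eqref{F6}); all surviving boundary contributions arise only from the later integrations by parts against $\log$ factors, which is why \eqref{P1.0} contains only logarithms of $b(\sigma)-b(y_0)+i\iota\epsilon$ and no poles.

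The genuine divergence from the paper is in how the hardest terms are handled, and here your proposal contains an imprecision that matters. After the double integration by parts, the terms $T_2$, $T_4$, $T_5$ of \eqref{P3} involve $\partial_z^2\big[b''(z)\psi^{\iota}_{k,\epsilon}(z,y_0)\big]$ (or a first derivative meeting a surviving pole). Your claim that ``differentiating $\psi^{\iota}_{k,\epsilon}$ generates at worst an extra $\log$'' is true only for the first derivative: by the very definition \eqref{Z1'} of $Z^1$, writing $\partial_z\psi^{\iota}=g\log(b(z)-b(y_0)+i\iota\epsilon)+h$ gives $\partial_z^2\psi^{\iota}=g'\log+h'+g(z)b'(z)(b(z)-b(y_0)+i\iota\epsilon)^{-1}$, i.e.\ a genuine first-order pole, which against the pole already present in $T_5$ yields a second-order pole. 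The paper's device is to avoid this entirely by substituting the eigenfunction equation \eqref{F7} for $\partial_z^2\psi^{\iota}_{k,\epsilon}$ (see \eqref{P9} and \eqref{P14}), converting second derivatives into $\psi^{\iota}_{k,\epsilon}$ and $\omega_0^k$ divided by $(b(z)-b(y_0)+i\iota\epsilon)$, after which Lemma \ref{bX17} and one more integration by parts finish the job. Your alternative — use only the $Z^1$ regularity from Lemma \ref{L0.1} together with the $\mathcal{T}_2'$ cancellation of \eqref{X20}--\eqref{X21} — can be pushed through, but it forces you to apply Lemma \ref{bX17} to the resulting double poles $\int G_k\,g\,b''(b(z)-b(y_0)+i\iota\epsilon)^{-2}dz$ and to track the delta function in $\partial_z^2G_k$; you would need to spell this out, whereas the substitution via \eqref{F7} is the cleaner and shorter route and is the one ingredient of the paper's proof your sketch omits. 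The $|k|$ and $[\log\langle k\rangle]^7$ bookkeeping you leave at the level of a plausibility count; that is acceptable for a sketch but is where the exponents in \eqref{P1.00} are actually earned.
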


\begin{remark}\label{RM1}
In general, $\Upsilon^{\iota ij}_{\sigma,k,y_0,\epsilon}(y)$ are not vanishing even if we assume that $\omega_0^k(y)$ vanishes at $y=0,1$, see \eqref{P7}.
\end{remark}

We also need the following estimates.
\begin{lemma}\label{P18}
Set
\begin{equation}\label{P19}
F^{5\iota}_{k,y_0,\epsilon}:=\int_0^1G_k(y,z)b''(z)\frac{\partial_{y_0}\psi^{\iota}_{k,\epsilon}(z,y_0)}{(b(z)-b(y_0)+i\iota\epsilon)^2}dz.
\end{equation}
Then there exist functions $\Upsilon^{\iota i j}_{\sigma,\tau,k,y_0,\epsilon}(y)$ and $H^{\iota}_{k,y_0,\epsilon}(y)$ for $\sigma,\tau\in\{0,1\}, \iota\in\{\pm\},i,j\in\{0,1,2\}$, $(k,y_0,\epsilon)\in\Sigma$, satisfying
\begin{equation}\label{P19.0}
\left\|\Upsilon^{\iota i j}_{\sigma,\tau,k,y_0,\epsilon}\right\|_{X^{1,2}_{k,y_0,\iota\epsilon}}\lesssim  |k|^{-2}\big[\log{\langle k\rangle}\big]^{10},
\end{equation}
such that
\begin{equation}\label{P19.1}
F^{5\iota}_{k,y_0,\epsilon}(y)=\sum_{\sigma,\tau\in\{0,1\},i,j\in\{0,1,2\}}\Upsilon^{\iota ij}_{\sigma,\tau,k,y_0,\epsilon}(y)\Big[\log{(b(\sigma)-b(y_0)+i\iota\epsilon)}\Big]^i\Big[\log{(b(\tau)-b(y_0)+i\iota\epsilon)}\Big]^j.
\end{equation}
\end{lemma}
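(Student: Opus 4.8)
The plan is to reduce Lemma \ref{P18} to Lemma \ref{P0} together with the decomposition of $\partial_{y_0}\psi^{\iota}_{k,\epsilon}$ from Lemma \ref{L0.2}. The key observation is that $F^{5\iota}_{k,y_0,\epsilon}$ has the same structure as $F^{4\iota}_{k,y_0,\epsilon}$ from \eqref{P1} but with one fewer power of the singular denominator and with $\psi^{\iota}_{k,\epsilon}$ replaced by $\partial_{y_0}\psi^{\iota}_{k,\epsilon}$; so the proof amounts to substituting the explicit formula \eqref{L0.21} for $\partial_{y_0}\psi^{\iota}_{k,\epsilon}(z,y_0)$ into \eqref{P19} and estimating each resulting piece with the machinery already developed in Section 2.

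First I would insert \eqref{L0.21} into the definition \eqref{P19}, writing $\partial_{y_0}\psi^{\iota}_{k,\epsilon}(z,y_0)$ as a sum of: (i) the main term $\big[\tfrac{b'(y_0)\omega_0^k(z)}{|b'(z)|^2}-\tfrac{b'(y_0)b''(z)\psi^{\iota}_{k,\epsilon}(z,y_0)}{|b'(z)|^2}\big]\log(b(z)-b(y_0)+i\iota\epsilon)$, (ii) the boundary contributions $b'(y_0)\sum_{\sigma}\omega_0^k(\sigma)\Psi^{\iota}_{\sigma,k,y_0,\epsilon}(z)\log(b(\sigma)-b(y_0)+i\iota\epsilon)$, and (iii) the remainder $\mathcal{R}^{\iota}_{k,y_0,\epsilon}(z)$. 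For (i), after dividing by $(b(z)-b(y_0)+i\iota\epsilon)^2$ one gets an integrand of the type $G_k(y,z)b''(z)\cdot(\text{smooth})\cdot\log(b(z)-b(y_0)+i\iota\epsilon)/(b(z)-b(y_0)+i\iota\epsilon)^2$; using $1/(b(z)-b(y_0)+i\iota\epsilon)=\tfrac{1}{b'(z)}\partial_z\log(b(z)-b(y_0)+i\iota\epsilon)$ twice and integrating by parts (exactly as in the proofs of Lemmas \ref{bX17} and \ref{P0}) reduces the singularity and leaves boundary terms that carry the factors $\log(b(\sigma)-b(y_0)+i\iota\epsilon)$. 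The $\psi^{\iota}_{k,\epsilon}$ part of (i) is handled using \eqref{L5} to control $\psi^{\iota}_{k,\epsilon}$ in $Z^1_{k,y_0,\iota\epsilon}$. For (ii), the functions $\Psi^{\iota}_{\sigma,k,y_0,\epsilon}$ are already controlled in $Z^1_{k,y_0,\iota\epsilon}$ by \eqref{L0.200}, so after dividing by $(b(z)-b(y_0)+i\iota\epsilon)^2$ we again integrate by parts and pick up boundary terms in $\log(b(\tau)-b(y_0)+i\iota\epsilon)$; the product of the $\log(b(\sigma)-\cdot)$ from (ii) and the $\log(b(\tau)-\cdot)$ generated by integration by parts is precisely the double-log structure displayed in \eqref{P19.1}. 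For (iii), since $\mathcal{R}^{\iota}_{k,y_0,\epsilon}\in Y^1_{k,y_0,\iota\epsilon}$ with the bound in \eqref{L0.211}, one more integration by parts as above gives a term of the required form with no growth beyond $[\log\langle k\rangle]^{10}$.

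Throughout, the norm bookkeeping uses that each integration by parts against $G_k$ or $\partial_{y,z}G_k$ costs at most a factor $[\log\langle k\rangle]$ via \eqref{Gk1.1} and \eqref{Gk3.1}, and that multiplication by the smooth coefficients $b''$, $1/b'$ and their $z$-derivatives is harmless on $Z^1$ and $Y^{1,m}$; accumulating these over the (finitely many) pieces yields the stated power $[\log\langle k\rangle]^{10}$ and the $|k|^{-2}$ gain, the latter coming from the $|k|^{-3}$ in \eqref{L5}/\eqref{L0.211} for the $\psi$- and $\mathcal{R}$-pieces and from the two factors of $G_k$ (one from the definition, one produced by integration by parts) absorbing $|k|^{-1}$ each in the worst, boundary-free case while the boundary terms from \eqref{bX5}-type expressions are $O(1)$ but multiplied by $\omega_0^k(\sigma)=O(|k|^{-2})$ in the $H^3_k$-normalization via \eqref{Hk1}. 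I expect the main obstacle to be the careful tracking of the boundary terms: dividing twice by $(b(z)-b(y_0)+i\iota\epsilon)$ and integrating by parts produces nested boundary contributions at $z=0,1$ of the form (explicit in $y$, via $\sinh$) times $\log(b(\sigma)-b(y_0)+i\iota\epsilon)/(b(\sigma)-b(y_0)+i\iota\epsilon)$ or products of two such logs, and one must verify these all fit into the finite sum over $i,j\in\{0,1,2\}$ and $\sigma,\tau\in\{0,1\}$ in \eqref{P19.1} with coefficients $\Upsilon^{\iota ij}_{\sigma,\tau,k,y_0,\epsilon}$ lying in $X^{1,2}_{k,y_0,\iota\epsilon}$; the rest is a repetition of the integration-by-parts calculations already carried out in Lemmas \ref{bX17} and \ref{P0}, so I would state this and omit the routine details.
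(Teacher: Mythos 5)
Your proposal is correct and follows essentially the same route as the paper: both arguments feed the decomposition of $\partial_{y_0}\psi^{\iota}_{k,\epsilon}$ from Lemma \ref{L0.2} into \eqref{P19} and reduce the singularity by iterated integration by parts via $\frac{1}{b(z)-b(y_0)+i\iota\epsilon}=\frac{1}{b'(z)}\partial_z\log(b(z)-b(y_0)+i\iota\epsilon)$, with the double--log products in \eqref{P19.1} arising exactly as you describe. The only organizational difference is that the paper integrates by parts \emph{first} (splitting $F^{5\iota}$ into $T_8,T_9,T_{10}$ in \eqref{P20}) and then invokes \eqref{L0.21} for $T_8,T_9$ and the mixed-derivative decomposition \eqref{L0.22} for the term where $\partial_z$ lands on $\partial_{y_0}\psi^{\iota}$, rather than differentiating the substituted decomposition; the two computations carry the same content. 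Concerning the obstacle you flag: the worrisome boundary contributions of the form $\log(b(\sigma)-b(y_0)+i\iota\epsilon)/(b(\sigma)-b(y_0)+i\iota\epsilon)$ never actually appear, because the first integration by parts evaluates $G_k(y,z)$ at $z=0,1$, where it vanishes by the Dirichlet condition \eqref{eq:GreenFunction}; only the second integration by parts produces nonzero boundary terms (through $\partial_zG_k$, which does not vanish at the endpoints), and by that stage the singular factor has already been converted into pure powers of logarithms, so everything fits the form \eqref{P19.1}.
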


The proofs of Lemma \ref{P0} and Lemma \ref{P18} use only properties on $\psi^{\iota}_{k,\epsilon}(y,y_0), \partial_{y_0}\psi^{\iota}_{k,\epsilon}(y,y_0)$ obtained in Lemma \ref{L0.1} and Lemma \ref{L0.2}. The calculations are relatively straightforward but lengthy. We postpone the proofs to the appendix. 

Finally we are ready to prove the following bounds on $\partial_{y_0}^2\psi^{\iota}_{k,\epsilon}(y,y_0)$.
\begin{lemma}\label{L1}
We have  the following decomposition
\begin{equation}\label{L2.2}
\begin{split}
\partial_{y_0}^2&\psi^{\iota}_{k,\epsilon}(y,y_0)=\omega_0^k(1)\frac{|b'(y_0)|^2\Phi^{1\iota}_{k,\epsilon}(y,y_0)}{b(1)-b(y_0)+i\iota\epsilon}+\omega_0^k(0)\frac{|b'(y_0)|^2\Phi^{0\iota}_{k,\epsilon}(y,y_0)}{b(0)-b(y_0)+i\iota\epsilon}\\
&\hspace{0.7in}+\frac{|b'(y_0)|^2}{|b'(y)|^2}\times\frac{b''(y)\psi^{\iota}_{k,\epsilon}(y,y_0)-\omega_0^k(y)}{b(y)-b(y_0)+i\iota\epsilon}\\
&+\sum_{\sigma,\tau\in\{0,1\},i,j\in\{0,1,2\}}\Upsilon^{\iota ij}_{\sigma,\tau,k,y_0,\epsilon}(y)\big[\log{(b(\sigma)-b(y_0)+i\iota\epsilon)}\big]^i\big[\log{(b(\tau)-b(y_0)+i\iota\epsilon)}\big]^j,
\end{split}\end{equation}
where  $\iota\in\{\pm\}, (k,y_0,\epsilon)\in\Sigma$ and $(y,y_0)\in[0,1]$ and
\begin{equation}\label{L2.001}
\begin{split}
\sum_{\sigma\in\{0,1\}}|k|^{-1}\left\|\Phi^{\sigma\iota}_{k,\epsilon}\right\|_{Z^1_{k,y_0,\iota\epsilon}}+\sum_{\sigma,\tau\in\{0,1\},\,\iota\in\{+,-\},i,j\in\{0,1,2\}}\left\|\Upsilon^{j\iota i}_{\sigma,\tau.k,y_0,\epsilon}\right\|_{X^{1,2}_{k,y_0,\epsilon}}\lesssim |k|^{-1}\big[\log{\langle k\rangle}\big]^{10}.
\end{split}
\end{equation}
In addition, $\Phi^{\sigma\iota}_{k,\epsilon}(y,y_0)$ are given by the explicit equations \eqref{L8.31'}, and
\begin{equation}\label{L2.002}
\lim_{\epsilon\to0}\Big[\Phi^{\sigma+}_{k,\epsilon}(y,y_0)-\Phi^{\sigma-}_{k,\epsilon}(y,y_0)\Big]=0,\qquad{\rm for}\,\,y_0\in\{0,1\}, \sigma\in\{0,1\},
\end{equation}
\begin{equation}\label{L2.003}
\sup_{\sigma\in\{0,1\}}\left|\partial_{y_0,\epsilon}\Phi^{\sigma\iota}_{k,\epsilon}(y,y_0)\right|\lesssim\big[\log{\langle k\rangle}\big]^{10}\bigg[\sum_{\alpha\in\{0,1,y\}}\big|\log{(b(\alpha)-b(y_0)+i\iota\epsilon)}\big|\bigg].
\end{equation}
\end{lemma}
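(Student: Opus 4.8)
The plan is to differentiate the fixed-point equation \eqref{F8} twice in $y_0$, read off the singular part of the resulting inhomogeneous term with the lemmas already at hand, peel off the two genuinely non-integrable pieces, and invert $I-S^{\iota}_{k,y_0,\epsilon}$ on the regular remainder via Lemma \ref{X22}. Writing $\beta=\beta(z):=b(z)-b(y_0)+i\iota\epsilon$ and using $\partial_{y_0}\beta=-b'(y_0)$, $\partial_{y_0}^2\beta=-b''(y_0)$, differentiating \eqref{F8} twice gives
\[
(I-S^{\iota}_{k,y_0,\epsilon})\,\partial_{y_0}^2\psi^{\iota}_{k,\epsilon}(\cdot,y_0)=\mathcal{H}^{\iota}_{k,y_0,\epsilon},
\]
where $\mathcal{H}^{\iota}_{k,y_0,\epsilon}=b''(y_0)F^{2\iota}_{k,y_0,\epsilon}+2|b'(y_0)|^{2}F^{3\iota}_{k,y_0,\epsilon}-2|b'(y_0)|^{2}F^{4\iota}_{k,y_0,\epsilon}-2b'(y_0)F^{5\iota}_{k,y_0,\epsilon}-b''(y_0)\int_0^1 G_k(y,z)\frac{b''(z)\psi^{\iota}_{k,\epsilon}(z,y_0)}{\beta(z)^2}\,dz$; the first two terms come from $\partial_{y_0}^2$ of the right side of \eqref{F8} and the last three from $\partial_{y_0}^2$ of the integral on its left side, with $F^{\sigma\iota}_{k,y_0,\epsilon}$ as in \eqref{Fs} and $F^{4\iota},F^{5\iota}$ as in Lemmas \ref{P0}, \ref{P18}.

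Next I would identify the singular structure of $\mathcal{H}^{\iota}_{k,y_0,\epsilon}$. By \eqref{bF3} of Lemma \ref{L0.3} and \eqref{P1.0} of Lemma \ref{P0}, the $[b(y)-b(y_0)+i\iota\epsilon]^{-1}$-singular part of $2|b'(y_0)|^2(F^{3\iota}-F^{4\iota})$ is exactly
\[
\Theta:=\frac{|b'(y_0)|^2}{|b'(y)|^2}\cdot\frac{b''(y)\psi^{\iota}_{k,\epsilon}(y,y_0)-\omega_0^k(y)}{b(y)-b(y_0)+i\iota\epsilon};
\]
by \eqref{bF3.1} the $[b(\sigma)-b(y_0)+i\iota\epsilon]^{-1}$ boundary-singular part of $-2|b'(y_0)|^2\mathcal{B}^{\iota}_{k2}$ is $\sum_{\sigma\in\{0,1\}}\omega_0^k(\sigma)\frac{|b'(y_0)|^2}{b(\sigma)-b(y_0)+i\iota\epsilon}P_\sigma(y)$ with $P_1(y):=\frac{\sinh(ky)}{|b'(1)|^2\sinh k}$, $P_0(y):=\frac{\sinh(k(1-y))}{|b'(0)|^2\sinh k}$; and everything else lands in $X^{1,2}_{k,y_0,\iota\epsilon}$ with norm $\lesssim|k|^{-1}[\log\langle k\rangle]^{10}$: namely $b''(y_0)F^{2\iota}$ by \eqref{bF2}, $-2b'(y_0)F^{5\iota}$ by Lemma \ref{P18}, the $\int G_k b''\psi/\beta^2$ term after one integration by parts (the boundary term vanishes because $G_k(y,0)=G_k(y,1)=0$), the $\log(b(\sigma)-b(y_0)+i\iota\epsilon)$ parts of $\mathcal{B}^{\iota}_{k1},\mathcal{B}^{\iota}_{k2}$, the $\Upsilon$-sums of Lemmas \ref{P0}, \ref{P18}, and the error terms in \eqref{bF2}--\eqref{bF3.1}, all collected into terms of the form $\Upsilon^{\iota ij}_{\sigma,\tau,k,y_0,\epsilon}(y)[\log(b(\sigma)-b(y_0)+i\iota\epsilon)]^i[\log(b(\tau)-b(y_0)+i\iota\epsilon)]^j$.

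Then I would peel off the singular pieces. Define $\Phi^{\sigma\iota}_{k,\epsilon}(\cdot,y_0)$ for $\sigma\in\{0,1\}$ as the solution of $(I-S^{\iota}_{k,y_0,\epsilon})\Phi^{\sigma\iota}_{k,\epsilon}(\cdot,y_0)=P_\sigma$ (this is the content of \eqref{L8.31'}); since $P_\sigma\in Y^1_{k,y_0,\iota\epsilon}$ with norm $\lesssim1$ and $\|S^{\iota}_{k,y_0,\epsilon}P_\sigma\|_{Y^{1,m}_{k,y_0,\iota\epsilon}}\lesssim|k|^{-1}[\log\langle k\rangle]^{m+3}$ by Lemma \ref{bX1m}, Lemma \ref{X22} yields $\Phi^{\sigma\iota}=P_\sigma+(I-S^{\iota})^{-1}S^{\iota}P_\sigma$ with $\|\Phi^{\sigma\iota}_{k,\epsilon}(\cdot,y_0)\|_{Z^1_{k,y_0,\iota\epsilon}}\lesssim[\log\langle k\rangle]^{10}$, the first bound in \eqref{L2.001}. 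Now set $\rho:=\partial_{y_0}^2\psi^{\iota}_{k,\epsilon}(\cdot,y_0)-\Theta-\sum_{\sigma\in\{0,1\}}\omega_0^k(\sigma)\frac{|b'(y_0)|^2}{b(\sigma)-b(y_0)+i\iota\epsilon}\Phi^{\sigma\iota}_{k,\epsilon}(\cdot,y_0)$. Since the scalar factors $\frac{|b'(y_0)|^2}{b(\sigma)-b(y_0)+i\iota\epsilon}$ do not depend on $y$, applying $I-S^{\iota}$, using the defining equation for $\Phi^{\sigma\iota}$, and using the structure of $\mathcal{H}$ from the previous step, the singular pieces cancel and $(I-S^{\iota})\rho=\big(\text{the }X^{1,2}\text{-part of }\mathcal{H}\big)+S^{\iota}\Theta$. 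The crucial point — and the main obstacle — is that $S^{\iota}\Theta$ is itself in $X^{1,2}_{k,y_0,\iota\epsilon}$ with norm $\lesssim|k|^{-1}[\log\langle k\rangle]^{10}$: using the eigenfunction equation \eqref{F7} to rewrite $\frac{b''(z)\psi^{\iota}_{k,\epsilon}(z,y_0)-\omega_0^k(z)}{b(z)-b(y_0)+i\iota\epsilon}=\partial_z^2\psi^{\iota}_{k,\epsilon}(z,y_0)-k^2\psi^{\iota}_{k,\epsilon}(z,y_0)$, one integrates by parts in $z$ in $S^{\iota}\Theta(y)=-\int_0^1 G_k(y,z)\frac{b''(z)}{b(z)-b(y_0)+i\iota\epsilon}\Theta(z)\,dz$; all boundary terms produced vanish because $G_k(y,0)=G_k(y,1)=0$ and $\psi^{\iota}_{k,\epsilon}(0,y_0)=\psi^{\iota}_{k,\epsilon}(1,y_0)=0$, and each time the procedure regenerates a factor $[b(z)-b(y_0)+i\iota\epsilon]^{-2}$ it does so against a coefficient proportional to $b''\psi^{\iota}_{k,\epsilon}-\omega_0^k$ (via a further use of \eqref{F7}), which vanishes at $z=0,1$, so that Lemma \ref{bX17} applies with zero boundary term $\mathcal{B}$ and produces only $\log(b(y)-b(y_0)+i\iota\epsilon)$-type (hence $X^{1,2}$) output, while the $k^2\psi^{\iota}_{k,\epsilon}$-pieces are genuinely in $L^\infty$ and contribute a small regular term; the procedure terminates after finitely many steps. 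With this, Lemma \ref{X22} applied with $V_{k,y_0,\epsilon}=X^{1,2}_{k,y_0,\iota\epsilon}$ gives $\|\rho\|_{X^{1,2}_{k,y_0,\iota\epsilon}}\lesssim|k|^{-1}[\log\langle k\rangle]^{10}$, and absorbing $\rho$ together with the $X^{1,2}$-part of $\mathcal{H}$ into the $\Upsilon$-sums establishes \eqref{L2.2} and \eqref{L2.001}.

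Finally, for \eqref{L2.002}: when $y_0\in\{0,1\}$ the operators $S^{+}_{k,y_0,\epsilon}$ and $S^{-}_{k,y_0,\epsilon}$ converge, as $\epsilon\to0$, to the same limiting operator, since in the limit their difference carries a multiple of $G_k(y,y_0)\,\delta(z-y_0)$, which is zero because $G_k(y,0)=G_k(y,1)=0$; as $\Phi^{\sigma+}_{k,\epsilon}$ and $\Phi^{\sigma-}_{k,\epsilon}$ solve $(I-S^{\pm}_{k,y_0,\epsilon})\Phi=P_\sigma$ with a right side independent of $\iota$ and $\epsilon$, this forces $\Phi^{\sigma+}_{k,\epsilon}-\Phi^{\sigma-}_{k,\epsilon}\to0$. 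For \eqref{L2.003} one differentiates the defining equation \eqref{L8.31'} in $y_0$ and $\epsilon$ and reruns the argument above, exactly as in the proof of \eqref{L0.20001}: since $P_\sigma$ is independent of $y_0,\epsilon$, the function $\partial_{y_0,\epsilon}\Phi^{\sigma\iota}$ solves $(I-S^{\iota})\partial_{y_0,\epsilon}\Phi^{\sigma\iota}=(\partial_{y_0,\epsilon}S^{\iota})\Phi^{\sigma\iota}$, whose right side is controlled by $[\log\langle k\rangle]^{10}\big[\sum_{\alpha\in\{0,1,y\}}|\log(b(\alpha)-b(y_0)+i\iota\epsilon)|\big]$ by the same integration-by-parts estimates used for $S^{\iota}\Theta$, and Lemma \ref{X22} then yields \eqref{L2.003}.
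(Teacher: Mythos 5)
Your proposal follows the same route as the paper: differentiate \eqref{F8} twice in $y_0$, split the resulting source into the boundary-pole part (inverted via the functions $\Phi^{\sigma\iota}$ of \eqref{L8.31'}), the interior-pole part $\Theta=\mathcal{N}_2$ (peeled off so that only $S^{\iota}\Theta$ needs to be estimated), and an $X^{1,2}$/log remainder handled by Lemma \ref{X22}; the treatments of \eqref{L2.002}--\eqref{L2.003} also match the paper's.

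One assertion in your treatment of $S^{\iota}\Theta$ is wrong, though the architecture survives it. You claim that whenever the integration by parts regenerates a factor $[b(z)-b(y_0)+i\iota\epsilon]^{-2}$ it does so against a coefficient proportional to $b''\psi^{\iota}_{k,\epsilon}-\omega_0^k$, ``which vanishes at $z=0,1$,'' so that Lemma \ref{bX17} applies with $\mathcal{B}=0$ and no boundary contributions arise. But $\psi^{\iota}_{k,\epsilon}(\cdot,y_0)$ vanishes at the endpoints while $\omega_0^k$ in general does not, so $b''\psi^{\iota}_{k,\epsilon}-\omega_0^k$ equals $-\omega_0^k(\sigma)\neq0$ at $z=\sigma\in\{0,1\}$; indeed
\begin{equation*}
S^{\iota}\Theta(y)=-|b'(y_0)|^2\int_0^1G_k(y,z)\frac{b''(z)}{|b'(z)|^2}\,\frac{b''(z)\psi^{\iota}_{k,\epsilon}(z,y_0)-\omega_0^k(z)}{(b(z)-b(y_0)+i\iota\epsilon)^2}\,dz
\end{equation*}
is of exactly the type of $T_5$ in the proof of Lemma \ref{P0} (see \eqref{P14}--\eqref{P16}), which produces \emph{nonvanishing} boundary terms proportional to $b''(\sigma)\omega_0^k(\sigma)\,P_\sigma(y)\log(b(\sigma)-b(y_0)+i\iota\epsilon)$; Remark \ref{RM1} makes the same point. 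This does not break your proof: these boundary contributions are of precisely the form $\Upsilon^{\iota ij}_{\sigma,\tau,k,y_0,\epsilon}(y)[\log(b(\sigma)-b(y_0)+i\iota\epsilon)]^i[\log(b(\tau)-b(y_0)+i\iota\epsilon)]^j$ admitted by \eqref{L2.2}, with coefficients in $Y^1_{k,y_0,\iota\epsilon}\subset X^{1,2}_{k,y_0,\iota\epsilon}$ obeying \eqref{L2.001}, so they should simply be collected into the $\Upsilon$-sum rather than declared zero. You should make that correction explicit, since the whole point of the $\sigma,\tau$-indexed sum in the statement is to absorb exactly these non-vanishing boundary logs.
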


\begin{proof}
Taking two derivatives in $y_0$ in \eqref{F8} we obtain
\begin{equation}\label{L8.1}
\begin{split}
\partial^2_{y_0}&\psi^{\iota}_{k,\epsilon}(y,y_0)+\int_0^1G_k(y,z)\frac{b''(z)\partial^2_{y_0}\psi^{\iota}_{k,\epsilon}(z,y_0)}{b(z)-b(y_0)+i\iota\epsilon}\,dz\\
=&b''(y_0)F^{2\iota}_{k,y_0,\epsilon}+2|b'(y_0)|^2F^{3\iota}_{k,y_0,\epsilon}-2(b'(y_0))^2F^{4\iota}_{k,y_0,\epsilon}(y)-2b'(y_0)F^{5\iota}_{k,y_0,\epsilon}(y)\\
&-b''(y_0)\int_0^1G_k(y,z)\frac{b''(z)\psi^{\iota}_{k,\epsilon}(z,y_0)}{(b(z)-b(y_0)+i\iota\epsilon)^2}dz:=\mathcal{N}.
\end{split}
\end{equation}
By Lemma \ref{L0.3}-Lemma \ref{P18}, we can represent 
$\mathcal{N}=\sum_{j=1}^3\mathcal{N}_j,$ where
\begin{equation}\label{L8.2}
\begin{split}
\mathcal{N}_1:=&\frac{|b'(y_0)|^2\sinh{(ky)}}{|b'(1)|^2\sinh{k}}\frac{\omega_0^k(1)}{b(1)-b(y_0)+i\iota\epsilon}+\frac{|b'(y_0)|^2\sinh{(k(1-y))}}{|b'(0)|^2\sinh{k}}\frac{\omega_0^k(0)}{b(0)-b(y_0)+i\iota\epsilon},\\
\mathcal{N}_2:=&\frac{|b'(y_0)|^2}{|b'(y)|^2}\times\frac{b''(y)\psi^{\iota}_{k,\epsilon}(y,y_0)-\omega_0^k(y)}{b(y)-b(y_0)+i\iota\epsilon},\\
 \mathcal{N}_3:=&\sum_{\sigma,\tau\in\{0,1\},i,j\in\{0,1,2\}}\Upsilon^{\ast\iota ij}_{\sigma,\tau,k,y_0,\epsilon}(y)\big[\log{(b(\sigma)-b(y_0)+i\iota\epsilon)}\big]^i\big[\log{(b(\tau)-b(y_0)+i\iota\epsilon)}\big]^j,
 \end{split}
\end{equation}
for some $\Upsilon^{\ast\iota i j}_{\sigma,\tau,k,y_0,\epsilon}(y)$ satisfying
\begin{equation}\label{L8.3}
\sum_{i,j\in\{0,1,2\},\,\sigma,\tau\in\{0,1\}}\left\|\Upsilon^{\ast\iota ij}_{\sigma,\tau,k,y_0,\epsilon}\right\|_{X^{1,2}_{k,y_0,\iota\epsilon}}\lesssim |k|^{-1}\big[\log{\langle k\rangle}\big]^{10}.
\end{equation}
We solve the equation
\begin{equation}\label{L8.31}
f(y)+\int_0^1G_k(y,z)\frac{f(z)}{b(z)-b(y_0)+i\iota\epsilon}=\mathcal{N}_i,
\end{equation}
for $i\in\{1,2,3\}$ respectively, and then $\partial_{y_0}^2\psi^{\iota}_{k,\epsilon}$ is the sum of the corresponding solutions. The case of $\mathcal{N}_1$ follows from Lemma \ref{X22} with the norm $Y^1_{k,y_0,\iota\epsilon}$. We note that the functions $\Phi^{\sigma\iota}_{k,\epsilon}, \sigma\in\{0,1\}$ solves
\begin{equation}\label{L8.31'}
\begin{split}
(I-S^{\iota}_{k,y_0,\epsilon})\Phi^{1\iota}_{k,\epsilon}&=\frac{\sinh{(ky)}}{|b'(1)|^2\sinh{k}},\\
(I-S^{\iota}_{k,y_0,\epsilon})\Phi^{0\iota}_{k,\epsilon}&=\frac{\sinh{(k(1-y))}}{|b'(0)|^2\sinh{k}}.
\end{split}
\end{equation}
The claims \eqref{L2.001}-\eqref{L2.003} on $\Phi^{j\iota}_{k,\epsilon}, j\in\{2,3\}$ follow from \eqref{L8.31'}, in view of Lemma \ref{X22}, similar to the proof of \eqref{L0.20001}.
The case of $\mathcal{N}_3$ follow from Lemma \ref{X22} with the norm $X^{1,2}_{k,y_0,\iota\epsilon}$.

The only nontrivial case is $\mathcal{N}_2$, which is much more singular than Lemma \ref{X22} would allow. In this case we write the solution to \eqref{L8.31} with $i=2$ in the form of
\begin{equation}\label{L8.32}
f^{\iota}_{k,y_0,\epsilon}(y)+\frac{|b'(y_0)|^2}{|b'(y)|^2}\times\frac{b''(y)\psi^{\iota}_{k,\epsilon}(y,y_0)-\omega_0^k(y)}{b(y)-b(y_0)+i\iota\epsilon},
\end{equation}
Then $f^{\iota}_{k,y_0,\epsilon}(y)$ solves
\begin{equation}\label{L8.33}
f^{\iota}_{k,y_0,\epsilon}(y)-S^{\iota}_{k,y_0,\epsilon}f^{\iota}_{k,y_0,\epsilon}(y)=S^{\iota}_{k,y_0,\epsilon}\left[\frac{|b'(y_0)|^2}{|b'(y)|^2}\times\frac{b''(y)\psi^{\iota}_{k,\epsilon}(y,y_0)-\omega_0^k(y)}{b(y)-b(y_0)+i\iota\epsilon}\right]:=\mathcal{N}_2'.
\end{equation}
Now by Lemma \ref{bX17} and the definitions \eqref{Sep} and \eqref{L2.1}, we can bound  $f^{\iota}_{k,y_0,\epsilon}(y)$ using Lemma \ref{bX17} in the norm $Y^{1}_{k,y_0,\epsilon}$ for the boundary term of $\mathcal{N}_2'$ and the norm $X^{1,2}_{k,y_0,\epsilon}$ for the other terms of $\mathcal{N}_2'$. The Lemma then follows.  

\end{proof}

\section{Proof of the main theorem}
With the help of Lemma \ref{L0.1}, Lemma \ref{L0.2} and Lemma \ref{L1}, we can now prove the precise decay rate of the stream function $\psi_k(t,y)$ and the main theorem \ref{thm}. 
\begin{lemma}\label{L10}
The stream function $\psi_k(t,y)$ satisfies for each $k\in\mathbb{Z}\backslash\{0\}$,
\begin{equation}\label{L11}
\begin{split}
\psi_k(t,y)=&\frac{e^{-ikb(y)t}}{k^2t^2}\frac{b''(y)}{|b'(y)|^2}\bigg[\varphi_{1k}(y)-\varphi_{2k}(y)\mathbf{1}_{k<0}\bigg]-\frac{e^{-ikb(y)t}}{k^2t^2}\frac{\omega_0^k(y)}{|b'(y)|^2}\\
                  &+\frac{e^{-ikb(0)t}}{2k^2\pi t^2i|b'(0)|}\varphi_{3k}(y)-\frac{e^{-ikb(1)t}}{2k^2\pi t^2i|b'(1)|}\varphi_{4k}(y)\\
                  &+ \omega_0^k(0)\frac{e^{-ikb(0)t}}{k^2t^2}\varphi_{5k}(y)+\omega_0^k(1)\frac{e^{-ikb(1)t}}{k^2t^2}\varphi_{6k}(y)+\Gamma_{1k}(t,y),
\end{split}
\end{equation}
and
\begin{equation}\label{L11.1}
\partial_y\psi_k(t,y)= i\frac{e^{-ikb(y)t}}{kt}\frac{\omega_0^k(y)}{b'(y)}- i\frac{e^{-ikb(y)t}}{kt}\frac{b''(y)}{b'(y)}\Big[\varphi_{1k}(y)-\varphi_{2k}(y)\mathbf{1}_{k<0}\Big]+\Gamma_{2k}(t,y).
\end{equation}
In the above we recall the definitions \eqref{F6}--\eqref{F7} and \eqref{L8.31'}, and set 
\begin{equation}\label{L11.0}
\varphi_{1k}(y):=\lim_{\epsilon\to0+}\psi^{+}_{k,\epsilon}(y,y),\qquad \varphi_{2k}(y):=\lim_{\epsilon\to0+}\left[\psi^{-}_{k,\epsilon}(y,y)-\psi^{+}_{k,\epsilon}(y,y)\right].
\end{equation}
\begin{equation}\label{L11.00}
\varphi_{3k}(y):=\lim_{\epsilon\to0+}\big[\partial_{y_0}\psi^{-}_{k,\epsilon}(y,0)-\partial_{y_0}\psi^{+}_{k,\epsilon}(y,0)\big], \qquad\varphi_{4k}(y):=\lim_{\epsilon\to0+}\big[\partial_{y_0}\psi^{-}_{k,\epsilon}(y,1)-\partial_{y_0}\psi^{+}_{k,\epsilon}(y,1)\big],
\end{equation}
\begin{equation}\label{L11.01}
\varphi_{5k}(y):=\lim_{\epsilon\to0+}\Phi^{0+}_{k,\epsilon}(y,0),\qquad \varphi_{6k}(y):=\lim_{\epsilon\to0+}\Phi^{1+}_{k,\epsilon}(y,1);
\end{equation}
In addition, with the normalization $\|\omega_0^k\|_{H^3_k}=1$, (recall the definition \eqref{Hk}) we have
\begin{equation}\label{L11.02}
\Gamma_{1 k}(t,y)=\frac{1}{2\pi ik^2t^2}\int_0^1e^{-ikb(y_0)t}\Upsilon_{1 k}(y,y_0)\,dy_0+\Gamma_{1 k}'(t,y),
\end{equation} 
\begin{equation}\label{L11.2}
\qquad\|\Gamma_{1 k}'(t,y)\|_{L^{\infty}}\lesssim \frac{|k|^{-1}}{(kt)^{23/8}}, \qquad \|\Gamma_{2 k}(t,y)\|_{L^{\infty}}\lesssim \frac{1}{|k|^{1.9}t^{15/8}};
\end{equation}
and for $\sigma\in\{1,2\}$,
\begin{equation}\label{L11.3}
|\Upsilon_{\sigma k}(y,y_0)|\lesssim |k|^{-1}\big[\log{\langle k\rangle}\big]^{10}\bigg[\sum_{\alpha\in\{0,1,y\}}\big|\log{|y_0-\alpha|}\big|+1\bigg]^6,
\end{equation}
Moreover, if $\omega_0^k(0)=\omega_0^k(1)=0$, then
\begin{equation}\label{L11.4}
\varphi_{3k}(y)\equiv\varphi_{4k}(y)\equiv0,
\end{equation}
and in this case the decomposition for $\psi_k(t,y)$ simplies
\begin{equation}\label{L11.5}
\begin{split}
\psi_k(t,y)=\frac{e^{-ikb(y)t}}{k^2t^2}\frac{b''(y)}{|b'(y)|^2}\Big[\varphi_{1k}(y)-\varphi_{2k}(y)\mathbf{1}_{k<0}\Big]-\frac{e^{-ikb(y)t}}{k^2t^2}\frac{\omega_0^k(y)}{|b'(y)|^2}+\Gamma_{1k}(t,y).
\end{split}
\end{equation}
\end{lemma}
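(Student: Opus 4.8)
The strategy is to start from the spectral representation \eqref{F5} of $\psi_k(t,y)$ and substitute in the decomposition of $\partial_{y_0}^2\psi^{\iota}_{k,\epsilon}(y,y_0)$ obtained in Lemma \ref{L1}, together with the first-derivative information from Lemma \ref{L0.2}, in order to integrate by parts twice in $y_0$ in the oscillatory integral $\int_0^1 e^{-ikb(y_0)t}|b'(y_0)|[\psi^-_{k,\epsilon}(y,y_0)-\psi^+_{k,\epsilon}(y,y_0)]\,dy_0$. Since $\partial_{y_0}(e^{-ikb(y_0)t}) = -ikb'(y_0)t\,e^{-ikb(y_0)t}$, each integration by parts gains a factor $(kt)^{-1}$, so two integrations produce the claimed $t^{-2}$ rate for $\psi_k$ and one integration produces the $t^{-1}$ rate for $\partial_y\psi_k$ in \eqref{L11.1}. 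First I would record the boundary contributions at $y_0 \in \{0,1\}$ that arise from the integrations by parts: the $\partial_{y_0}\psi^{\iota}$ terms evaluated at the endpoints give the $\varphi_{3k}, \varphi_{4k}$ pieces (using \eqref{L11.00}), and these vanish when $\omega_0^k(0)=\omega_0^k(1)=0$ by the definitions in Lemma \ref{L0.2}, giving \eqref{L11.4}.

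\textbf{Main terms.} The most singular piece of $\partial_{y_0}^2\psi^{\iota}_{k,\epsilon}$ is the term $\frac{|b'(y_0)|^2}{|b'(y)|^2}\cdot\frac{b''(y)\psi^{\iota}_{k,\epsilon}(y,y_0)-\omega_0^k(y)}{b(y)-b(y_0)+i\iota\epsilon}$, which is a simple pole in $y_0$ at $y_0=y$. When we subtract the $\iota=\pm$ contributions and take $\epsilon\to0$, the Plemelj formula gives a $\delta(b(y_0)-b(y))$ contribution, i.e.\ a point mass at $y_0=y$, which upon integration against $e^{-ikb(y_0)t}|b'(y_0)|$ and division by $k^2t^2$ produces exactly the term $\frac{e^{-ikb(y)t}}{k^2t^2}\frac{b''(y)\psi^{\iota}_{k,\epsilon}(y,y)-\omega_0^k(y)}{|b'(y)|^2}$; evaluating $\psi^{\iota}_{k,\epsilon}(y,y)$ at $\epsilon\to0^+$ and distinguishing $\iota$ according to the sign of $k$ (which dictates which of $\psi^{\pm}$ survives in the principal-value-vs-jump decomposition) yields the $\varphi_{1k}, \varphi_{2k}\mathbf{1}_{k<0}$ structure in \eqref{L11} and \eqref{L11.5}, with $\varphi_{1k},\varphi_{2k}$ as in \eqref{L11.0}. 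The boundary terms $\Phi^{\sigma\iota}_{k,\epsilon}$ from Lemma \ref{L1}, with their simple poles at $y_0\in\{0,1\}$, similarly yield the $\varphi_{5k},\varphi_{6k}$ contributions via \eqref{L11.01}, and the $\varphi_{3k},\varphi_{4k}$ boundary-value terms come with the extra $\frac{1}{2\pi i}$ normalization visible in \eqref{L11}. Everything else in the decomposition of $\partial^2_{y_0}\psi^{\iota}$ lies in spaces like $X^{1,2}_{k,y_0,\iota\epsilon}$, which only involve integrable logarithmic singularities in $y_0$; integrating these against the oscillatory factor and using the bounds \eqref{L2.001}, \eqref{L0.211} collects them into $\Gamma_{1k}$, with the explicit leading part $\Upsilon_{1k}$ of size \eqref{L11.3} and a genuinely faster-decaying remainder $\Gamma_{1k}'$.

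\textbf{The remainder estimate.} To get the improved decay $(kt)^{-23/8}$ for $\Gamma_{1k}'$ (and $t^{-15/8}$ for $\Gamma_{2k}$), I would not integrate by parts a third time — the second derivatives $\partial_{y_0}^2\psi^{\iota}$ only have logarithmic-type regularity, not enough for a clean third integration by parts. Instead I would use a standard non-stationary phase/interpolation argument: split the $y_0$-integral near the singular points $\{0,1,y\}$ and away from them at a scale $\sim (kt)^{-\theta}$ for a suitable $\theta\in(0,1)$; on the region away from the singularities one can integrate by parts a fractional number of extra times (or at least once more, gaining $(kt)^{-1}$ up to logarithmic losses absorbed by the $(\log\langle k\rangle)^{10}$ weights), while on the small region near each singularity one estimates crudely using the $L^{\infty}$-in-$y$ and weighted bounds from Lemma \ref{L1}, losing only the measure of that region. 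Optimizing $\theta$ against the logarithmic-times-algebraic growth in \eqref{L2.003}, \eqref{L0.20003} and the pointwise bounds \eqref{L11.3} gives the fractional exponent $23/8$. The $\partial_y\psi_k$ bound \eqref{L11.1}--\eqref{L11.2} is handled the same way but with one fewer integration by parts, using the $\partial_y$ version of Lemma \ref{L1} and the $\partial_y\Phi^{\sigma\iota}$ information; note the main term here only sees the first-order-in-$y_0$ pole, hence the $\varphi_{1k},\varphi_{2k}$ structure with no boundary $\varphi$'s.

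\textbf{Expected main obstacle.} The hard part will be bookkeeping the interaction between the jump $\psi^- - \psi^+$ and the $\epsilon\to0$ limit of the simple-pole terms: one must be careful that the Plemelj $\delta$-contribution is extracted from the \emph{difference} of the $\iota=\pm$ pieces and not spuriously duplicated, and that the sign of $k$ enters correctly (this is the source of the $\mathbf{1}_{k<0}$). Equally delicate is verifying that all the products-of-logarithms terms $\Upsilon^{\iota ij}_{\sigma,\tau}[\log(b(\sigma)-b(y_0)+i\iota\epsilon)]^i[\log(b(\tau)-b(y_0)+i\iota\epsilon)]^j$ from Lemma \ref{L1}, after one integration by parts in $y_0$, produce boundary terms at $\{0,1\}$ and interior terms all of which are still integrable in $y_0$ with the stated logarithmic power \eqref{L11.3} — this requires the derivative bounds \eqref{L0.20001}, \eqref{L2.003} and some care with how $\partial_{y_0}$ hits the logarithmic factors (each differentiation of $\log(b(y_0)-\cdots)$ produces a pole $\sim 1/(b(y_0)-b(\sigma))$, which must be compensated by the vanishing of the corresponding $\Upsilon$ coefficient or re-expressed via another integration by parts). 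Once these two points are handled, assembling \eqref{L11}--\eqref{L11.5} is routine.
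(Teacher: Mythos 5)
Your overall architecture coincides with the paper's: represent $\psi_k$ by \eqref{F5}, integrate by parts twice in $y_0$, read off the boundary terms at $y_0\in\{0,1\}$ as $\varphi_{3k},\varphi_{4k}$ (vanishing when $\omega_0^k(0)=\omega_0^k(1)=0$, via \eqref{L6}), feed in the decomposition \eqref{L2.2} of $\partial_{y_0}^2\psi^{\iota}_{k,\epsilon}$, and treat the Poisson-kernel part of the pole difference by Plemelj to get $\varphi_{1k}$, the $\Phi^{\sigma\iota}$ poles to get $\varphi_{5k},\varphi_{6k}$, and the logarithmic terms as $\Upsilon_{1k}$ inside $\Gamma_{1k}$. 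All of that matches the paper's Substeps 1.1--1.3 and Step 2.

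There is, however, one concrete gap: the piece
$\lim_{\epsilon\to0^+}\int_0^1 e^{-ikb(y_0)t}\,b'(y_0)\,\frac{\beta_{3k}(y,y_0)}{b(y)-b(y_0)-i\epsilon}\,dy_0$
with $\beta_{3k}=\psi^{-}_{k,\epsilon}-\psi^{+}_{k,\epsilon}$ the jump, which is where the $\mathbf{1}_{k<0}\varphi_{2k}(y)$ term actually comes from. Plemelj applied to this expression produces a principal value plus a $\delta$-contribution proportional to $\beta_{3k}(y,y)$ with a \emph{universal} constant, independent of ${\rm sgn}(k)$; the $\mathbf{1}_{k<0}$ can only emerge from the large-$t$ asymptotics of the remaining principal-value (Hilbert-transform) integral, and your proposed fallback for remainders --- splitting at scale $(kt)^{-\theta}$ and estimating crudely near the singularity --- fails here because the integrand is not absolutely integrable near $y_0=y$ as $\epsilon\to0$: a pointwise bound times the measure of the small region does not close, and one must exploit cancellation. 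The paper resolves this in \eqref{L22}--\eqref{L27} by writing $\beta_{3k}(y,b^{-1}(z))=\int_{\R}f_k(y,\xi)e^{iz\xi}\,d\xi$ with $|f_k(y,\xi)|\lesssim\langle\xi\rangle^{-31/16}$ (this is where the regularity statements \eqref{L0.20001}, \eqref{L0.20003}, \eqref{L2.003} are consumed) and using the one-sided identity
$\lim_{\epsilon\to0^+}\int_{\R}\frac{e^{iz(\xi-kt)}}{b(y)-z+i\epsilon}\,dz=-2\pi i\,e^{ib(y)(\xi-kt)}\mathbf{1}_{\xi>kt}$,
so that the integral becomes $-e^{-ikb(y)t}\int_{\xi>kt}f_k(y,\xi)e^{ib(y)\xi}\,d\xi$; for $k>0$ this is a tail of size $(kt)^{-15/16}$, while for $k<0$ it converges to the full integral $\beta_{3k}(y,y)$ with the same error --- yielding simultaneously the $\mathbf{1}_{k<0}$ main term and the $(kt)^{-23/8}$ remainder. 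The same device is reused verbatim for the $\Phi^{\sigma\iota}$ poles ($\mathcal{T}_{52}$) and for the $\partial_y\psi_k$ statement. Your proposal correctly identifies this as the delicate point but does not supply a mechanism that would actually produce it; you would need to add this Fourier/contour step (or an equivalent quantitative analysis of the oscillatory Hilbert transform) to complete the argument.
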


\begin{remark}\label{MR9}
It is possible to obtain more quantitative decay estimates on $\Gamma_{1k}(t,y)$ than the qualitative bounds in \eqref{L11.02}. However, it would  require a precise understanding of the singularities of $\partial_{y_0}^3\psi^{\iota}_{k,\epsilon}(y,y_0)$, $\iota\in\{\pm\}$. While it can be done using ideas in this paper, the computations involved are lengthy, especially in the presence of boundary terms, which always need to be tracked separately.

The precise asymptotic \eqref{L11.1} and \eqref{L11.5} could be useful for nonlinear applications. In fact, based on the main terms in \eqref{L11.5}, it is tempting to speculate that for the nonlinear problem the correct quantity to track is not $\omega$ but a suitable modification of $\omega$, adapted to the asymptotic given by \eqref{L11.1}. 
\end{remark}

\begin{proof}
We normalize $\|\omega_0^k\|_{H^3}=1$.\\
{\bf Step 1: The proof of \eqref{L11}.}
Our starting point is the formula \eqref{F5}, which we reproduce here
\begin{equation}\label{L14}
 \psi_k(t,y)=-\frac{1}{2\pi i}\lim_{\epsilon\to0+}\int_{0}^1e^{-ikb(y_0) t}|b'(y_0)|\left[\psi_{k,\epsilon}^{-}(y_0,y)-\psi_{k,\epsilon}^{+}(y,y_0)\right]dy_0.
\end{equation}
The basic idea is to use integration by parts in $y_0$ in the formula \eqref{L14} to gain decay in $t$. Let $\aleph$ be the sign of $b'$. We first note, using \eqref{L0.20002}-\eqref{L0.20003}, that
\begin{equation}\label{L14.0}
\left|\psi^{+}_{k,\epsilon}(y,y_0)-\psi^{-}_{k,\epsilon}(y,y_0)\right|\lesssim |k|^{-2}\big[\log{\langle k\rangle}\big]^4\epsilon^{1/2},\qquad{\rm for}\,\,y_0\in\{0,1\}.
\end{equation}
By Lemma \ref{L0.2}, we then obtain from \eqref{L14.0} that
\begin{equation}\label{L14.1}
\left|\lim_{\epsilon\to0}\big[\psi^{+}_{k,y_0,\epsilon}(y)-\psi^{-}_{k,y_0,\epsilon}(y)\big]\right|\lesssim|k|^{-2}\big[\log{\langle k\rangle}\big]^4 \min\left\{|y_0-1|^{1/2},|y_0|^{1/2}\right\}.
\end{equation}
Integration by parts in $y_0$, we obtain
\begin{equation}\label{L15}
\begin{split}
 \aleph\psi_k(t,y)=&\frac{1}{2k\pi t}\lim_{\epsilon\to0+}\int_{0}^1e^{-ikb(y_0) t}\left[\partial_{y_0}\psi_{k,\epsilon}^{-}(y,y_0)-\partial_{y_0}\psi_{k,\epsilon}^{+}(y,y_0)\right]dy_0\\
                 =&-\frac{1}{2k^2\pi it^2}\lim_{\epsilon\to0+}\int_{0}^1e^{-ikb(y_0) t}\frac{b''(y_0)}{|b'(y_0)|^2}\left[\partial_{y_0}\psi_{k,\epsilon}^{-}(y,y_0)-\partial_{y_0}\psi_{k,\epsilon}^{+}(y,y_0)\right]dy_0\\
                   &-\frac{1}{2k^2\pi i t^2}\frac{e^{-ikb(y_0) t}}{b'(y_0)}\lim_{\epsilon\to0+}\left[\partial_{y_0}\psi_{k,\epsilon}^{-}(y,y_0)-\partial_{y_0}\psi_{k,\epsilon}^{+}(y,y_0)\right]\bigg|_{y_0=0}^{1}\\
                   &+\frac{1}{2k^2\pi t^2i}\lim_{\epsilon\to0+}\int_0^1\frac{e^{-ikb(y_0) t}}{b'(y_0)}\left[\partial_{y_0}^2\psi_{k,\epsilon}^{-}(y,y_0)-\partial_{y_0}^2\psi_{k,\epsilon}^{+}(y,y_0)\right]dy_0=\mathcal{T}_3+\mathcal{T}_4+\mathcal{T}_5.
 \end{split}
\end{equation}

{\bf Substep 1.1} From the decomposition \eqref{L0.21} and \eqref{L2.2}, it follows from integration by parts argument that
\begin{equation}\label{L15.1}
\big|\mathcal{T}_3\big|\lesssim  \frac{|k|^{-1}}{(kt)^{23/8}}.
\end{equation}

{\bf Substep 1.2} 
We now consider the term $\mathcal{T}_4$. Set for $\sigma\in\{0,1\}$
\begin{equation}\label{L16}
\beta_{\sigma k}(y):=\lim_{\epsilon\to0+}\Big[\partial_{y_0}\psi_{k,\epsilon}^{-}(y,\sigma)-\partial_{y_0}\psi_{k,\epsilon}^{+}(y,\sigma)\Big].
\end{equation}
By Lemma \ref{L0.2}, 
\begin{equation}\label{L16.1}
\sum_{\sigma\in\{0,1\}}\left\|\beta_{\sigma k}\right\|_{L^{\infty}_y}\lesssim|k|^{-2} \big[\log{\langle k\rangle}\big]^7.
\end{equation}
By the definitions, we conclude that 
\begin{equation}\label{L16.2}
\mathcal{T}_4=\frac{1}{2k^2\pi i t^2}\frac{e^{-iktb(0)}}{b'(0)}\beta_{0k}(y)-\frac{1}{2k^2\pi i t^2}\frac{e^{-iktb(1)}}{b'(1)}\beta_{1k}(y).
\end{equation}
We also note, if $\omega_0^k(0)=\omega_0^k(1)=0$, then the functions $\beta_{\sigma k}(y)\equiv0$ for $\sigma\in\{0,1\}, y\in[0,1]$, which can be seen from \eqref{L6}.

{\bf Substep 1.3}
We now consider the term $\mathcal{T}_5$. We use \eqref{L2.2}, and obtain from \eqref{L15} that
\begin{equation}\label{L16.4}
\begin{split}
\mathcal{T}_5=&\frac{1}{2k^2\pi t^2i}\frac{b''(y)}{|b'(y)|^2}\lim_{\epsilon\to0}\int_0^1e^{-ikb(y_0)t}b'(y_0)\left[\frac{\psi_{k,\epsilon}^{-}(y,y_0)}{b(y)-b(y_0)-i\epsilon}-\frac{\psi^{+}_{k,\epsilon}(y,y_0)}{b(y)-b(y_0)+i\epsilon}\right]dy_0\\
                         &+\sum_{\sigma\in\{0,1\}}\omega_0^k(\sigma)\frac{1}{2k^2\pi t^2i}\int_0^1e^{-ikb(y_0)t}b'(y_0)\left[\frac{\Phi^{\sigma-}_{k,\epsilon}(y,y_0)}{b(\sigma)-b(y_0)-i\epsilon}-\frac{\Phi^{\sigma+}_{k,\epsilon}(y,y_0)}{b(\sigma)-b(y_0)+i\epsilon}\right]\,dy_0\\
                       &-\aleph\frac{\omega_0^k(y)}{|b'(y)|^2}\frac{e^{-ikb(y)t}}{k^2t^2}+\frac{1}{2\pi ik^2t^2}\int_0^1e^{-ikb(y_0)t}b'(y_0)\Upsilon_{k,\epsilon}(y,y_0)dy_0\\
:=&\frac{b''(y)}{|b'(y)|^2}\mathcal{T}_{51}+\mathcal{T}_{52}+\frac{1}{2\pi ik^2t^2}\int_0^1e^{-ikb(y_0)t}b'(y_0)\Upsilon_{k,\epsilon}(y,y_0)dy_0-\aleph\frac{\omega_0^k(y)}{|b'(y)|^2}\frac{e^{-ikb(y)t}}{k^2t^2}.
\end{split}
\end{equation}
In the above, we used the definitions \eqref{L8.31'}, and $\Upsilon_{k,\epsilon}(y,y_0)$ are given as in \eqref{L11.3}.

We first consider the term
\begin{equation}\label{L17.4}
\begin{split}
\mathcal{T}_{51}:&=\frac{1}{2k^2\pi i t^2}\lim_{\epsilon\to0+}\int_0^1e^{-ikb(y_0) t}b'(y_0)\left[\frac{\psi^{-}_{k,\epsilon}(y,y_0)}{b(y)-b(y_0)-i\epsilon}-\frac{\psi^{+}_{k,\epsilon}(y,y_0)}{b(y)-b(y_0)+i\epsilon}\right]\,dy_0\\
&=\frac{1}{2k^2\pi i t^2}\lim_{\epsilon\to0+}\int_{0}^1e^{-ikb(y_0) t}b'(y_0)\left[\frac{\psi^{-}_{k,\epsilon}(y,y_0)-\psi^{+}_{k,\epsilon}(y,y_0)}{b(y)-b(y_0)-i\epsilon}\right]\,dy_0\\
&\qquad+\frac{1}{k^2\pi  t^2}\lim_{\epsilon\to0+}\epsilon\int_{0}^1e^{-ikb(y_0) t}b'(y_0)\left[\frac{ \psi^{+}_{k,\epsilon}(y,y_0)}{(b(y)-b(y_0))^2+\epsilon^2}\right]\,dy_0\\
&=\frac{1}{2k^2\pi t^2i}\lim_{\epsilon\to0+}\int_{0}^1e^{-ikb(y_0) t}b'(y_0)\left[\frac{\psi^{-}_{k,\epsilon}(y,y_0)-\psi^{+}_{k,\epsilon}(y,y_0)}{b(y)-b(y_0)-i\epsilon}\right]\,dy_0+\aleph\frac{e^{-ikb(y)t}}{k^2t^2}\beta_{2k}(y),
\end{split}
\end{equation}
where
\begin{equation}\label{L17.5}
\beta_{2k}(y):=\lim_{\epsilon\to0+}\psi^{+}_{k,\epsilon}(y,y)\qquad {\rm and}\qquad\|\beta_{2k}\|_{L^{\infty}}\lesssim |k|^{-2}.
\end{equation}
Set 
\begin{equation}\label{L21.2}
\beta_{3k}(y,y_0):=\lim_{\epsilon\to0+}\left[\psi^{-}_{k,\epsilon}(y,y_0)-\psi^{+}_{k,\epsilon}(y,y_0)\right]\mathbf{1}_{y_0\in[0,1]}.
\end{equation}
Write
\begin{equation}\label{L21.4}
\mathcal{T}_{51}=\mathcal{T}_{51}'+\aleph\frac{e^{-ikb(y)t}}{k^2t^2}\beta_{2k}(y).
\end{equation}
It remains to bound the term 
\begin{equation}\label{L22}
\begin{split}
\mathcal{T}_{51}':&=\frac{1}{2k^2\pi t^2i}\lim_{\epsilon\to0+}\int_{0}^1e^{-ikb(y_0) t}b'(y_0)\left[\frac{\psi^{-}_{k,\epsilon}(y,y_0)-\psi^{+}_{k,\epsilon}(y,y_0)}{b(y)-b(y_0)-i\epsilon}\right]\,dy_0\\
                           &=\frac{1}{2k^2\pi t^2i}\lim_{\epsilon\to0+}\int_{0}^1e^{-ikb(y_0) t}b'(y_0)\frac{\beta_{3k}(y,y_0)}{b(y)-b(y_0)-i\epsilon}\,dy_0\\
                            &=\frac{\aleph}{2k^2\pi t^2i}\lim_{\epsilon\to0+}\int_{\R}e^{-ikz t}\frac{\beta_{3k}(y,b^{-1}(z))}{b(y)-z+i\epsilon}\,dz.
\end{split}
\end{equation}
In the above we used \eqref{L0.20003}, and assume an appropriate monotone extension of $b$ to $\R$.

In view of Lemma \ref{L0.1}, Lemma \ref{L0.2} and Lemma \ref{L1}, using Fourier transform, we can thus find $f_{k}(y,\xi)$ with
\begin{equation}\label{L24}
\sup_{y\in[0,1],\xi\in\R}\left|\langle\xi\rangle^{31/16} f_k(y,\xi)\right|\lesssim|k|^{-1}\big[\log{\langle k\rangle}\big]^{10},
\end{equation}
such that
\begin{equation}\label{L25}
\beta_{3k}(y,b^{-1}(z))=\int_{\R} f_k(y,\xi)e^{iz\xi}\,d\xi.
\end{equation}
Using \eqref{L25}, we get
\begin{equation}\label{L26}
\begin{split}
\mathcal{T}_{51}'&=\frac{\aleph}{2k^2\pi t^2i}\lim_{\epsilon\to0+}\int_{\R}\int_{\R}e^{-ikz t+iz\xi}\frac{f_k(y,\xi)}{b(y)-z+i\epsilon}\,dzd\xi\\
                           &=-\aleph\frac{e^{-ikb(y)t}}{k^2t^2}\int_{kt<\xi}f_k(y,\xi)e^{ib(y)\xi}\,d\xi.
\end{split}
\end{equation}
Thus, in view of \eqref{L24}, it follows that
\begin{equation}\label{L27}
\left\|\mathcal{T}_{51}'+\aleph\frac{e^{-ikb(y)t}}{k^2t^2}\mathbf{1}_{k<0}\beta_{3k}(y,y)\right\|_{L^{\infty}}\lesssim \frac{|k|^{-1}}{(kt)^{23/8}}.
\end{equation}

Thus from \eqref{L21.4} and \eqref{L27}, we conclude that
\begin{equation}\label{L27.1}
\mathcal{T}_{51}=\aleph\frac{e^{-ikb(y)t}}{k^2t^2}\beta_{2k}(y)-\aleph\frac{e^{-ikb(y)t}}{k^2t^2}\mathbf{1}_{k<0}\beta_{3k}(y,y)+\Gamma_{2k}(t,y),
\end{equation}
where
\begin{equation}\label{L27.2}
\|\Gamma_{2k}(t)\|_{L^{\infty}}\lesssim \frac{|k|^{-1}}{(kt)^{23/8}}.
\end{equation}

Completely analogous to the treatment of the term $\mathcal{T}_{51}$, set 
\begin{equation}\label{L27.3}
\beta_{5k}(y,y_0):=\lim_{\epsilon\to0+}\left[\Phi^{1-}_{k,\epsilon}(y,y_0)-\Phi^{1+}_{k,\epsilon}(y,y_0)\right], \qquad \beta_{6k}(y,y_0):=\lim_{\epsilon\to0+}\left[\Phi^{0-}_{k,\epsilon}(y,y_0)-\Phi^{0+}_{k,\epsilon}(y,y_0)\right],
\end{equation}
\begin{equation}\label{L27.4}
\beta_{7k}(y):=\lim_{\epsilon\to0+}\Phi^{1+}_{k,\epsilon}(y,1),\qquad \beta_{8k}(y):=\lim_{\epsilon\to0+}\Phi^{0+}_{k,\epsilon}(y,0),
\end{equation}
then
\begin{equation}\label{L27.5}
\begin{split}
\mathcal{T}_{52}:=&\aleph\omega_0^k(1)\frac{e^{-ikb(1)t}}{k^2t^2}\Big[\beta_{7k}(y)-\mathbf{1}_{k<0}\beta_{5k}(y,1)\Big]\\
                              &+\aleph\omega_0^k(0)\frac{e^{-ikb(0)t}}{k^2t^2}\Big[\beta_{8k}(y)-\mathbf{1}_{k<0}\beta_{6k}(y,0)\Big]+\Gamma_{3k}(t,y)\\
                            =& \aleph \omega_0^k(1)\frac{e^{-ikb(1)t}}{k^2t^2}\beta_{7k}(y)+\aleph\omega_0^k(0)\frac{e^{-ikb(0)t}}{k^2t^2}\beta_{8k}(y)+\Gamma_{3k}(t,y),
\end{split}
\end{equation}
where
\begin{equation}\label{L27.6}
\|\Gamma_{3k}(t)\|_{L^{\infty}}\lesssim \frac{|k|^{-1}}{(kt)^{23/8}}.
\end{equation}

Combining the bounds on the terms $\mathcal{T}_{51}$ and $\mathcal{T}_{52}$, using \eqref{L16.4} we get bounds on $\mathcal{T}_5$, which together with the expressions \eqref{L15.1} on $\mathcal{T}_3$ and \eqref{L16.2} on $\mathcal{T}_4$, completes the proof of \eqref{L11}.\\
\noindent
{\bf Step 2: The proof of \eqref{L11.1}.} The proof of \eqref{L11.1} follows similar line, using the formula (see the first line of \eqref{L15})
\begin{equation}\label{L28}
\aleph\partial_y\psi_k(t,y)=\frac{1}{2k\pi t}\lim_{\epsilon\to0+}\int_0^1e^{-ikb(y_0)t}\Big[\partial_y\partial_{y_0}\psi^{-}_{k,\epsilon}(y,y_0)-\partial_y\partial_{y_0}\psi^{+}_{k,\epsilon}(y,y_0)\Big]\,dy_0
\end{equation} 
and the decomposition \eqref{L0.22} in Lemma \eqref{L0.2}. We reformulate \eqref{L0.22} as
\begin{equation}\label{L29}
\partial_{y}\partial_{y_0}\psi^{\iota}_{k,\epsilon}(y,y_0)=\left[\frac{b'(y_0)\omega^k_0(y)}{b'(y)}-\frac{b'(y_0)b''(y)}{b'(y)}\psi^{\iota}_{k,\epsilon}(y,y_0)\right]\frac{1}{b(y)-b(y_0)+i\iota\epsilon}+\mathcal{R}^{\iota}_{k,\epsilon}(y,y_0).
\end{equation}
Comparing the definition \eqref{L17.4} of $\mathcal{T}_{51}$, with a completely analogous argument as in the treatment of $\mathcal{T}_{51}$, we obtain from \eqref{L28} that
\begin{equation}\label{L29.1}
\begin{split}
\aleph\partial_{y}\psi_k(t,y)=&i\frac{e^{-ikb(y)t}}{kt}\frac{\omega_0^k(y)}{|b'(y)|}-kit\frac{b''(y)}{b'(y)}\mathcal{T}_{51}\\
                                                                                       &+\frac{1}{2k\pi t}\int_0^1e^{-ikb(y_0)t}\left[\mathcal{R}^{-}_{k,\epsilon}(y,y_0)-\mathcal{R}^{+}_{k,\epsilon}(y,y_0)\right]\,dy_0.
\end{split}
\end{equation}
Using Lemma \ref{L0.2}, comparing \eqref{L0.22} with \eqref{L2.2}-\eqref{L2.001}, we obtain the following bounds for $\mathcal{R}^{\iota}_{k,\epsilon}(y,y_0)$:
\begin{equation}\label{L30}
\begin{split}
\left|\mathcal{R}^{\iota}_{k,\epsilon}(y,y_0)\right|\lesssim|k|^{-1}\Big[\log{\langle k\rangle}\Big]^{10}\Big[\sum_{\alpha\in\{0,1,y\}}\big|\log{|y_0-\alpha|}\big|+1\Big]^6,
\end{split}
\end{equation}
\begin{equation}\label{L31}
\begin{split}
\left|\partial_{y_0}\mathcal{R}^{\iota}_{k,\epsilon}(y,y_0)\right|\lesssim \Big[\log{\langle k\rangle}\Big]^{10}\Big[\sum_{\alpha\in\{0,1,y\}}\big|\log{|y_0-\alpha|}\big|+1\Big]^6  \bigg[\sum_{\alpha\in\{0,1,y\}}\frac{1}{|\alpha-y_0|}\bigg].
\end{split}
\end{equation}
Using \eqref{L30}-\eqref{L31}, it is clear that for $\iota\in\{\pm\}$,
\begin{equation}\label{L32}
\left|\frac{1}{2k\pi t}\int_0^1e^{ikb(y_0)t}\mathcal{R}^{\iota}_{k,\epsilon}(y,y_0)\,dy_0\right|\lesssim \frac{1}{|k|^{1.9}t^{1.9}}.
\end{equation}
In view of \eqref{L29.1}, \eqref{L27.1} and \eqref{L32}, we completed the proof of \eqref{L11.1}.
\end{proof}

We finally give the proof of the main theorem.
\subsection{Proof of Theorem \ref{thm}}
In view of the definitions \eqref{Th1}, we have
\begin{equation}\label{L40}
\partial_tf(t,x,y)=b''(y)\partial_x\phi(t,x,y)=C_0b''(y)\sum_{k\in\mathbb{Z}}ike^{ikb(y)t+ikx}\psi_k(t,y).
\end{equation}
The bounds and convergence results \eqref{Th2} follow from the asymptotic formula \eqref{L11} which implies the right hand side of \eqref{L40} has integrable in time decay with the required bounds.

The bounds \eqref{Th3} follows from \eqref{L11} and \eqref{L11.1}, in view of the formulae
\begin{equation}\label{40.1}
\phi(t,x,y)=C_0\sum_{k\in\mathbb{Z}}e^{ikb(y)t+ikx}\psi_k(t,y),
\end{equation}
We now turn to \eqref{Th4} and focus only on the convergence of $\partial_yf$, which is harder.  We use \eqref{40.1} and compare the main terms in \eqref{L11.1} and \eqref{L11.5}, and obtain that
\begin{equation}\label{L42}
\begin{split}
\partial_y\phi(t,x,y)&=\frac{1}{2\pi}\sum_{k\in\mathbb{Z}}e^{ikx}\bigg[ikb'(y)t\,\Gamma_{1k}(t,y)+\Gamma_{2k}(t,y)\bigg]\\
                            &=\frac{b'(y)}{2k\pi t}\int_0^1e^{-ikb(y_0)t}\Upsilon_k(y,y_0)\,dy_0+ib'(y)kt\Gamma_{1k}'(t,y)+\Gamma_{2k}(t,y).
\end{split}
\end{equation}
An inspection of the proof of Lemma \ref{L10} shows that we can write 
\begin{equation}\label{L46}
\partial_x\partial_y\phi(t,x,y)=\sum_{k\in\mathbb{Z}}\bigg[\int_0^1e^{-ikb(y_0)t}F_{1k}(y,y_0)/t\,dy_0+O\left(\frac{1}{|k|^{0.9}t^{15/8}}\right)\|\omega_0^k\|_{H^3_k}\bigg],
\end{equation}
with $F_{1k}, F_{2k}$ satisfying the bounds
\begin{equation}\label{L46.1}
|F_{1k}(y,y_0)|\lesssim |k|^{-1}\big[\log{\langle k\rangle}\big]^{10}\bigg[\sum_{\alpha\in\{0,1,y\}}\big|\log{|y_0-\alpha|}\big|+1\bigg]^6\|\omega_0^k\|_{H^3_k}.
\end{equation}
Thanks to \eqref{L40}, to prove \eqref{Th4}, it sufficies to show
\begin{equation}\label{L43}
\sup_{T\in[0,\infty)}\left|\int_1^T\partial_x\partial_y\phi(t,x,y)dt\right|\lesssim \|\omega_0\|_{H^3},\qquad{\rm and}\qquad
\limsup_{T'>T\to\infty}\left|\int_T^{T'}\partial_x\partial_y\phi(t,x,y)dt\right|= 0.
\end{equation}
\eqref{L43} follows easily from \eqref{L46}-\eqref{L46.1}. 

\eqref{Th5} follows from the decompositions \eqref{L11.1} and \eqref{L11.5}, with
\begin{equation}\label{Psi6}
\Psi(x,y)=\frac{C_0}{|b'(y)|^2}\sum_{k\in\mathbb{Z}\backslash\{0\}}\frac{e^{ikx}}{k^2}\bigg\{\lim_{\epsilon\to0+}b''(y)\Big[\psi^{+}_{k,\epsilon}(y,y)-\mathbf{1}_{k<0}\big(\psi^{-}_{k,\epsilon}(y,y)-\psi^{+}_{k,\epsilon}(y,y)\big)\Big]-\omega_0^k(y)\bigg\}.
\end{equation}
In the above formula, we recall the definitions \eqref{F6}. The theorem is now proved.

\appendix
\section{Two technical lemmas}\label{appendix} 
In this section, we provide proofs for Lemma \ref{P0} and Lemma \ref{P18}.
\subsection{Proof of Lemma \ref{P0}} By calculations similar to \eqref{fF4.0}-\eqref{fF5}, we get that
 \begin{equation}\label{P3}
 \begin{split}
 F^{4\iota}_{k,y_0,\epsilon}(y)=&-\frac{1}{2}\frac{b''(y)\psi^{\iota}_{k,\epsilon}(y,y_0)}{|b'(y)|^2}\frac{1}{b(y)-b(y_0)+i\iota\epsilon}+\int_0^1\frac{\partial_zG_k(y,z)\partial_z\big[b''(z)\psi^{\iota}_{k,\epsilon}(z,y_0)\big]}{|b'(z)|^2\big[b(z)-b(y_0)+i\iota\epsilon\big]}dz\\
 &-\frac{3}{2}\int_0^1\frac{b''(z)}{(b'(z))^3}\frac{\big[b''(z)\psi^{\iota}_{k,\epsilon}(z,y_0)\big]\partial_zG_k(y,z)}{b(z)-b(y_0)+i\iota\epsilon}dz\\
 &-\frac{3}{2}\int_0^1\frac{b''(z)}{(b'(z))^3}\frac{\partial_z\big[b''(z)\psi^{\iota}_{k,\epsilon}(z,y_0)\big]G_k(y,z)}{b(z)-b(y_0)+i\iota\epsilon}dz\\
 &+\frac{1}{2}\int_0^1\frac{G_k(y,z)\partial_z^2\big[b''(z)\psi^{\iota}_{k,\epsilon}(z,y_0)\big]}{|b'(z)|^2(b(z)-b(y_0)+i\iota\epsilon)}\,dz+\frac{k^2}{2}\int_0^1\frac{b''(z)}{|b'(z)|^2}\frac{G_k(y,z)\psi^{\iota}_{k,\epsilon}(z,y_0)}{b(z)-b(y_0)+i\iota\epsilon}dz\\
 &+\frac{1}{2}\int_0^1\left[\frac{|b''(z)|^2}{|b'(z)|^4}+\frac{1}{b'(z)}\partial_z^2\left(\frac{1}{b'(z)}\right)\right]\frac{G_k(y,z)b''(z)\psi^{\iota}_{k,\epsilon}(z,y_0)}{b(z)-b(y_0)+i\iota\epsilon}dz:=\sum_{j=1}^7T_j.
 \end{split}
 \end{equation}
$T_1$ is the main term in \eqref{P1.0}. Upon integration by parts using 
\begin{equation}\label{P4'}
\partial_z\log{(b(z)-b(y_0)+i\iota\epsilon)}=\frac{b'(z)}{b(z)-b(y_0)+i\iota\epsilon},
\end{equation}
we obtain that
\begin{equation}\label{P5}
\|T_3\|_{X^{1,2}_{k,y_0,\iota\epsilon}}\lesssim |k|^{-3}\big[\log{\langle k\rangle}\big]^7.
\end{equation}
In addition, it follows from Lemma \ref{bX1} and \eqref{L01.2} that
\begin{equation}\label{P6}
\|T_6\|_{Z^1_{k,y_0,\iota\epsilon}}\lesssim |k|^{-2}\big[\log{\langle k\rangle}\big]^7,\qquad\|T_7\|_{Z^1_{k,y_0,\iota\epsilon}}\lesssim |k|^{-4}\big[\log{\langle k\rangle}\big]^7.
\end{equation}
It remains to study $T_2, T_4$ and $T_5$.

{\bf Step 1} In this step we consider the term $T_2$. Using \eqref{P4'} and integration by parts, we obtain
\begin{equation}\label{P7}
\begin{split}
T_2=&\int_0^1\frac{\partial_zG_k(y,z)b''(z)\partial_z\psi^{\iota}_{k,\epsilon}(z,y_0)}{|b'(z)|^2\big(b(z)-b(y_0)+i\iota\epsilon\big)}\,dz+g_{21}\\
      = &   \frac{b''(z)}{(b'(z))^3} \partial_zG_k(y,z) \partial_z\psi^{\iota}_{k,\epsilon}(z,y_0)\log{(b(z)-b(y_0)+i\iota\epsilon\big)}\bigg|_{z=0}^1\\
      &-\int_0^1\frac{\partial_zG_k(y,z)b''(z)}{(b'(z))^3}\partial_z^2\psi^{\iota}_{k,\epsilon}(z,y_0)\log{(b(z)-b(y_0)+i\iota\epsilon\big)}\,dz+g_{21}+g_{22}\\
      =&T_{21}+T_{22}+g_{21}+g_{22}.
\end{split}
\end{equation}
In the above, the more favorable terms $g_{21}, g_{22}$ satisfy
\begin{equation}\label{P8}
\|g_{21}\|_{X^{1,2}_{k,y_0,\iota\epsilon}}+|k|^{-1}\|g_{22}\|_{X^{1,2}_{k,y_0,\iota\epsilon}}\lesssim |k|^{-3}\big[\log{\langle k\rangle}\big]^7.
\end{equation}
$T_{21}$ will be part of the boundary terms in \eqref{P1.0}. Hence it suffices to bound $T_{22}$. Using equation \eqref{F7}, we have
\begin{equation}\label{P9}
\begin{split}
T_{22}=&g_{23}-\int_0^1\frac{\partial_zG_k(y,z)|b''(z)|^2}{(b'(z))^3}\frac{\psi^{\iota}_{k,\epsilon}(z,y_0)}{b(z)-b(y_0)+i\iota\epsilon}\log{(b(z)-b(y_0)+i\iota\epsilon\big)}\,dz\\
            &+\int_0^1\frac{\partial_zG_k(y,z)b''(z)}{(b'(z))^3}\frac{\omega_0^k(z)}{b(z)-b(y_0)+i\iota\epsilon}\log{(b(z)-b(y_0)+i\iota\epsilon\big)}\,dz,
\end{split}
\end{equation}
where
\begin{equation}\label{P10}
\|g_{23}\|_{Y^1_{k,y_0,\iota\epsilon}}\lesssim |k|^{-2}\big[\log{\langle k\rangle}\big]^7.
\end{equation}
Using \eqref{P4'} and integration by parts, we see that
\begin{equation}\label{P11}
T_{22}=\frac{1}{2}\partial_zG_k(y,z)\frac{b''(z)}{(b'(z))^4}\omega_0^k(z)\big(\log{(b(z)-b(y_0)+i\iota\epsilon\big)}\big)^2\bigg|_{z=0}^1+g_{24},
\end{equation}
with 
\begin{equation}\label{P12}
\|g_{24}\|_{X^{1,2}_{k,y_0,\iota\epsilon}}\lesssim |k|^{-2}\big[\log{\langle k\rangle}\big]^7.
\end{equation}

{\bf Step 2} In this step we consider the term $T_4$. Using \eqref{P4'} and integration by parts, and using equation \eqref{F7}, it is easy to obtain that
\begin{equation}\label{P13}
\|T_4\|_{Y^{1,2}_{k,y_0,\epsilon}}\lesssim  |k|^{-3}\big[\log{\langle k\rangle}\big]^7.
\end{equation}

{\bf Step 3} We finally consider the term $T_5$. Using again \eqref{F7}, we can write
\begin{equation}\label{P14}
\begin{split}
T_5:=&\frac{1}{2}\int_0^1\frac{G_k(y,z)b''(z)\partial_z^2\psi^{\iota}_{k,\epsilon}(z,y_0)}{|b'(z)|^2(b(z)-b(y_0)+i\iota\epsilon)}\,dz+g_{51}\\
      =&\frac{1}{2}\int_0^1\frac{G_k(y,z)b''(z)\psi^{\iota}_{k,\epsilon}(z,y_0)}{|b'(z)|^2(b(z)-b(y_0)+i\iota\epsilon)^2}\,dz-\frac{1}{2}\int_0^1\frac{G_k(y,z)b''(z)\omega_0^k(z)}{|b'(z)|^2(b(z)-b(y_0)+i\iota\epsilon)^2}\,dz+g_{51}+g_{52},
\end{split}
\end{equation}
where
\begin{equation}\label{P15}
\|g_{51}\|_{X^{1,2}_{k,y_0,\iota\epsilon}}+|k|^{-1}\|g_{52}\|_{Z^1_{k,y_0,\iota\epsilon}}\lesssim  |k|^{-3}\big[\log{\langle k\rangle}\big]^7.
\end{equation}
Hence by Lemma \eqref{bX17}, 
\begin{equation}\label{P16}
\begin{split}
T_5=&\frac{1}{2}b''(1)\omega_0^k(1)\frac{\sinh{(ky)}}{|b'(1)|^2\sinh{k}}\log{(b(1)-b(y_0)+i\epsilon)}\\
&+\frac{1}{2}b''(0)\omega_0^k(0)\frac{\sinh{(k(1-y))}}{|b'(0)|^2\sinh{k}}\log{(b(0)-b(y_0)+i\epsilon)}+g_5,
\end{split}
\end{equation}
with
\begin{equation}\label{P17}
\|g_{5}\|_{X^{1,2}_{k,y_0,\iota\epsilon}}\lesssim  |k|^{-2}\big[\log{\langle k\rangle}\big]^7.
\end{equation}

Combing the bounds on $T_i, i\in\{1,2,\dots,7\}$ and collecting boundary terms from \eqref{P7}, \eqref{P11} and \eqref{P16}, the lemma is then proved.

\subsection{Proof of Lemma \ref{P18}}
 Using integration by parts, we see that
 \begin{equation}\label{P20}
 \begin{split}
 F^{5\iota}_{k,y_0,\epsilon}=&\int_0^1\partial_zG_k(y,z)\frac{b''(z)}{b'(z)}\frac{\partial_{y_0}\psi^{\iota}_{k,\epsilon}(z,y_0)}{b(z)-b(y_0)+i\iota\epsilon}dz+\int_0^1G_k(y,z)\partial_z\left[\frac{b''(z)}{b'(z)}\right]\frac{\partial_{y_0}\psi^{\iota}_{k,\epsilon}(z,y_0)}{b(z)-b(y_0)+i\iota\epsilon}dz\\
                                             &+\int_0^1G_k(y,z)\frac{b''(z)}{b'(z)}\frac{\partial_z\partial_{y_0}\psi^{\iota}_{k,\epsilon}(z,y_0)}{b(z)-b(y_0)+i\iota\epsilon}dz=T_8+T_9+T_{10}.
 \end{split}
 \end{equation}
 Using \eqref{L0.21}-\eqref{L0.200}, \eqref{P4'} and integration by parts, it is not hard to show that $T_8$ and $T_9$ satisfy the decomposition \eqref{P19.1} with bounds \eqref{P19.0}.
 
  To bound $T_{10}$, we use \eqref{L0.22},  Lemma \ref{bX1} - Lemma \ref{bX17}, and obtain that
 \begin{equation}\label{P23}
 \begin{split}
 T_{10}:=&b'(y_0)\int_0^1G_k(y,z)\frac{b''(z)}{(b'(z))^2}\frac{\omega_0^k(z)}{(b(z)-b(y_0)+i\iota\epsilon)^2}dz\\
         &-b'(y_0)\int_0^1G_k(y,z)\frac{(b''(z))^2}{(b'(z))^2}\frac{\psi^{\iota}_{k,y_0,\epsilon}}{(b(z)-b(y_0)+i\iota\epsilon)^2}dz\\
          &+b'(y_0)\sum_{\sigma\in\{0,1\}}\Phi^{\iota,\ast}_{k,y_0,\epsilon}(y)\omega_0^k(\sigma)\log{(b(\sigma)-b(y_0)+i\iota\epsilon)}+g_{9},
 \end{split}
 \end{equation} 
 where $g_9$ verifies  the decomposition \eqref{P19.1} with bounds \eqref{P19.0}. Applying Lemma \ref{bX17} to \eqref{P23}, the lemma  is then proved.


\begin{thebibliography}{9}
\bibitem{BeMa}J., Bedrossian, N. Masmoudi \emph{Inviscid damping and the asymptotic stability of planar shear flows in the 2D Euler equations}, Publ. Math. l'IHES,  122 (2015), 195-300.

\bibitem{Bed2}J. Bedrossian, M. Coti Zelati, and V. Vicol, \emph{Vortex axisymmetrization, inviscid damping, and vorticity depletion
in the linearized 2D Euler equations}, arXiv 1711.03668.

%\bibitem{Bed3}J. Bedrossian, P. Germain, and N. Masmoudi, \emph{Dynamics near the subcritical transition of the 3D Couette flow I: Below threshold case}, arXiv 1506.03720.

%\bibitem{Bed3}J. Bedrossian, P. Germain, and N. Masmoudi, \emph{Dynamics near the subcritical transition of the 3D Couette flow II: Above threshold case}, arXiv 1506.03721.

%\bibitem{Bed4} J. Bedrossian, P. Germain and N. Masmoudi, \emph{On the stability threshold for the 3D Couette flow in Sobolev
%regularity}, Ann. of Math. {\bf{185}} (2017), 541-608.

%\bibitem{Bed5} J. Bedrossian, P. Germain, and N. Masmoudi, \emph{Stability of the Couette flow at high Reynolds number in
%2D and 3D}, arXiv:1712.02855.

%\bibitem{Bed6} J. Bedrossian, N. Masmoudi and V. Vicol, \emph{Enhanced dissipation and inviscid damping in the inviscid
%limit of the Navier-Stokes equations near the two dimensional Couette flow}, Arch. Ration. Mech. Anal.,
%219(2016), 1087-1159.
%\bibitem{Bed7} J. Bedrossian, F. Wang and V. Vicol, \emph{The Sobolev stability threshold for 2D shear flows near Couette},
%arXiv:1604.01831.

%\bibitem{Arnold}
%V. Arnold and B. Khesin, \emph{Topological Methods in Hydrodynamics}, Springer-Verlag,
%New York, 1998.

\bibitem{Deng} Y. Deng and N. Masmoudi, \emph{Long time instability of the Couette flow in low Gevrey spaces}, preprint (2018), arXiv 1803.01246.

\bibitem{Faddeev}L. Faddeev, \emph{On the theory of the stability of plane-parallel flows of an ideal fluid}, Zapiski Nauchnykh Seminarov Leningradskogo Otdeleniya Matematicheskogo Instituta im. V. A. Steklova Akademii Nauk SSSR, Vol. 21, pp. 164-172, 1971

%\bibitem{Foias} C. Foias and R. Temam, \emph{Gevrey class regularity for the solutions of the Navier-Stokes equations}, J. Funct. Anal. {\bf{87}} (1989), 359-369.

\bibitem{Grenier}E., Grenier,  T., Nguyen, F., Rousset, A., Soffer, \emph{Linear inviscid damping and enhanced viscous dissipation of shear flows by using the conjugate operator method},  arXiv:1804.08291

\bibitem{IOJI} A. Ionescu and H. Jia, \emph{inviscid damping near shear flows in a channel}, preprint arXiv:1808.04026

\bibitem{Kelvin} L. Kelvin, \emph{Stability of fluid motion-rectilinear motion of viscous fluid between two plates}, Phi. Mag. 24 (1887), 155

\bibitem{Kirchhoff} G. Kirchhoff, \emph{Vorlesungen ber mathematische Physik}, Teubner, Leipzig 1876


%\bibitem{KiselevSverak}A. Kiselev and V. Sverak, \emph{Small scale creation for solutions of the incompressible two-dimensional Euler equation}, Ann. of Math. (2) {\bf{180}} (2014), 1205-1220.

%\bibitem{Vicol} I. Kukavica and V. Vicol, \emph{On the analyticity and Gevrey-class regularity up to the boundary for the Euler equations}, Nonlinearity {\bf{24}} (2011), 765-796.
 
%\bibitem{Vicol2} I. Kukavica and V. Vicol, \emph{On the radius of analyticity of solutions to the three-dimensional Euler equations} Proc. Amer. Math. Soc. {\bf{137}} (2009), 669-677.


%\bibitem{Landau} L. Landau,  \emph{On the vibration of the electronic plasma}, J. Phys. USSR 10 (1946), 25. English
%translation in JETP 16, 574. Reproduced in Collected papers of L.D. Landau, edited and with
%an introduction by D. ter Haar, Pergamon Press, 1965, pp. 445-460; and in Men of Physics:
%L.D. Landau, Vol. 2, Pergamon Press, D. ter Haar, ed. (1965).

%\bibitem{Levermore} C. Levermore and M. Oliver,  \emph{Analyticity of solutions for a generalized Euler
%equation}, J. Differential Equations {\bf{133}} (1997), 321-339.

\bibitem{ZhiWu} Z. Lin and C. Zeng, \emph{Inviscid dynamical structures near Couette flow}, Arch. Ration. Mech. Anal. {\bf{200}} (2011), 1075-1097.

\bibitem{Lin}Z. Lin, \emph{Instability of some ideal plane flows}, SIAM J. MATH. ANAL., Vol. 35, No. 2, pp 318-356

%\bibitem{Villani} C. Mouhot and C. Villani, \emph{On Landau damping}, Acta Math. {\bf{207}} (2011), 29-201.

\bibitem{Orr}W., Orr, \emph{The stability or instability of steady motions of a perfect liquid and of a viscous liquid}, Part I: a perfect liquid, Proc. R. Ir. Acad., A Math. Phys. Sci., 27 (1907), 9-68


\bibitem{Rayleigh}L. Rayleigh, \emph{On the stability or instability of certain fluid motions}, Proc. Lond. Math. Soc., S1-11 (1880), 57


\bibitem{Stepin}S. Stepin, \emph{Nonself-adjoint Friedrichs Model in Hydrodynamic Stability}, Functional analysis and its applications, Vol. 29, No. 2, 1995, Translated from Funktsionaltnyi Analiz i Ego Prilozheniya, Vol. 29, No. 2, pp. 22-35, April- June, 1995. Original article submitted August 3, 1994.

%\bibitem{SverakNotes} V. Sverak,  lecture notes, available at \emph{http://www-users.math.umn.edu/~sverak/course-notes2011.pdf}

%\bibitem{Yamanaka} T. Yamanaka, \emph{A new higher order chain rule and Gevrey class}, Ann. Global Anal. Geom. {\bf{7}} (1989), 179-203.

%\bibitem{Yudovich1}  V. Yudovich, \emph{Non-stationary flows of an ideal incompressible fluid (Russian)}, Z. Vycisl. Mat. i Mat. Fiz. {\bf{3}} (1963), 1032-1066.

%\bibitem{Yudovich2} V. Yudovich, \emph{Uniqueness theorem for the basic nonstationary problem in the dynamics of an ideal incompressible fluid}, Math. Res. Lett. {\bf{2}} (1995), 27-38.

\bibitem{dongyi}D. Wei, Z. Zhang, and W. Zhao, \emph{Linear Inviscid Damping for a Class of Monotone Shear Flow in Sobolev Spaces}, Comm. Pure Appl. Math. 71(2018), 617-687


\bibitem{Dongyi2} D. Wei, Z. Zhang and W. Zhao, \emph{Linear inviscid damping and vorticity depletion for shear flows},
arXiv:1704.00428.

\bibitem{Dongyi3} D. Wei, Z. Zhang and W. Zhao, \emph{Linear inviscid damping and enhanced dissipation for the Kolmogorov
flow}, arXiv:1711.01822.

%\bibitem{Dongyi4} D. Wei and Z. Zhang, \emph{Transition threshold for the 3D Couette flow in Sobolev space}, arXiv:1803.01359

\bibitem{Zillinger1} C. Zillinger, \emph{Linear inviscid damping for monotone shear flows},
Trans. Amer. Math. Soc. {\bf{369}} (2017), 8799-8855.

\bibitem{Zillinger2} C. Zillinger, \emph{Linear inviscid damping for monotone shear flows in a finite periodic channel, boundary effects, blow-up and critical Sobolev regularity},
Arch. Ration. Mech. Anal. {\bf{221}} (2016), 1449-1509.

\bibitem{Zillinger3}M. Coti Zelati and C. Zillinger, \emph{On degenerate circular and shear flows: the point vortex and power law circular flows}, arXiv:1801.07371 




\end{thebibliography}
\end{document}